\documentclass[a4paper]{amsart}
\tolerance=9999
\usepackage{color,stmaryrd}
\usepackage[dvipsnames]{xcolor}
\usepackage{amssymb}
\usepackage{amscd}
\usepackage{amsthm}
\usepackage{amsmath}
\usepackage{latexsym}
\usepackage[all]{xy}
\usepackage{mathrsfs}

\usepackage[czech,english]{babel}
\usepackage{hyperref}

\theoremstyle{plain}
\newtheorem{theorem}{Theorem}
\newtheorem{corollary}[theorem]{Corollary}
\newtheorem{lemma}[theorem]{Lemma}
\newtheorem{proposition}[theorem]{Proposition}
\newtheorem*{maintheorem}{Main Theorem}

\theoremstyle{definition}
\newtheorem{definition}[theorem]{Definition}
\newtheorem{example}[theorem]{Example}

\newtheorem{problem}[theorem]{Problem}

\theoremstyle{remark}

\newtheorem{remark}[theorem]{Remark}

\numberwithin{theorem}{section}
\numberwithin{equation}{section}

\makeatletter
\renewcommand{\p@enumii}{}
\makeatother


\DeclareMathOperator{\Add}{Add}

\DeclareMathOperator{\Gen}{Gen}
\DeclareMathOperator{\Cogen}{Cogen}
\DeclareMathOperator{\End}{End}
\DeclareMathOperator{\im}{Im}

\newcommand{\Spec}[1]{{\mathrm{Spec}\,#1}}

\newcommand{\ProjA}{{\mathrm{Proj}\textrm{-}A}}
\newcommand{\projA}{{\mathrm{proj}\textrm{-}A}}

\newcommand{\MorA}{{\mathrm{Mor}\textrm{-}A}}
\newcommand{\AMor}{{A\textrm{-}\mathrm{Mor}}}
\newcommand{\MorpA}{{\mathrm{Mor_{proj}}\textrm{-}A}}

\newcommand{\Hom}[3]{{\mathrm{Hom}_{#1}(#2,#3)}}
\newcommand{\Rhom}[3]{{\mathbf{R}\mathrm{Hom}_{#1}(#2,#3)}}
\newcommand{\lten}[1]{{\otimes^\mathbf{L}_#1}}
\newcommand{\Ext}[4]{{\mathrm{Ext}^{#1}_{#2}(#3,#4)}}
\newcommand{\Tor}[4]{{\mathrm{Tor}_{#1}^{#2}(#3,#4)}}

\newcommand{\rfmod}[1]{{\mathrm{mod}\textrm{-}{#1}}}
\newcommand{\rmod}[1]{{\mathrm{Mod}\textrm{-}{#1}}}

\newcommand{\lmod}[1]{{{#1}\textrm{-}\mathrm{Mod}}}

\newcommand{\ModA}{\rmod{A}}
\newcommand{\AMod}{\lmod{A}}
\newcommand{\modA}{\rfmod{A}}
\newcommand{\ModB}{\rmod{B}}
\newcommand{\BMod}{\lmod{B}}

\newcommand{\Ab}{\mathrm{Ab}}
\newcommand{\DerA}{\mathsf{D}(A)}

\newcommand{\Ann}[2]{\mbox{\rm{Ann}}_{#1}(#2)}
\newcommand{\Supp}[1]{\mbox{\rm{Supp}}\,#1}

\newcommand{\Ker}[1]{\mbox{\rm{Ker}}\,#1}
\newcommand{\Coker}[1]{\mbox{\rm{Coker}}\,#1}

\newcommand{\la}{\longrightarrow}
\newcommand{\dd}{\colon}
\newcommand{\id}{\mathrm{id}}
\newcommand{\op}{\mathrm{op}}
\newcommand{\torp}[1]{{\top\!}_{#1}}

\newcommand{\p}{\mathfrak{p}}
\newcommand{\q}{\mathfrak{q}}

\newcommand{\m}{\mathfrak{m}}

\newcommand{\Acal}{\ensuremath{\mathcal{A}}}
\newcommand{\Bcal}{\ensuremath{\mathcal{B}}}
\newcommand{\Ccal}{\ensuremath{\mathcal{C}}}
\newcommand{\Dcal}{\ensuremath{\mathcal{D}}}

\newcommand{\Fcal}{\ensuremath{\mathcal{F}}}
\newcommand{\Gcal}{\ensuremath{\mathcal{G}}}

\newcommand{\Lcal}{\ensuremath{\mathcal{L}}}

\newcommand{\Scal}{\ensuremath{\mathcal{S}}}
\newcommand{\Tcal}{\ensuremath{\mathcal{T}}}
\newcommand{\Ucal}{\ensuremath{\mathcal{U}}}
\newcommand{\Vcal}{\ensuremath{\mathcal{V}}}
\newcommand{\Xcal}{\ensuremath{\mathcal{X}}}
\newcommand{\Ycal}{\ensuremath{\mathcal{Y}}}

\newcommand{\Z}{\mathbb{Z}}
\newcommand{\QQ}{\mathbb{Q}}

\renewcommand{\iff}{if and only if }

\begin{document}

\title[Flat and silting epimorphisms]%
{Flat epimorphisms and silting epimorphisms coincide for commutative rings}

\author{Jan {\v S}{\v{t}}ov{\'{\i}}{\v{c}}ek}

\address{Charles University, Faculty of Mathematics and Physics, Department of Algebra, Sokolovsk\'a 83, 186 75 Praha, Czech Republic}
\email{stovicek@karlin.mff.cuni.cz}

\thanks{The research was supported by the grant GA~\v{C}R 23-05148S from the Czech Science Foundation.}

\date{October 2, 2025}

\begin{abstract}
We investigate the relation between partial silting modules, Gabriel topologies, and ring epimorphisms, with a particular emphasis on commutative rings. We show that a ring epimorphism of commutative rings is flat if and only if it is a silting ring epimorphism.
\end{abstract}

\maketitle

\setcounter{tocdepth}{2}
\tableofcontents

\bigskip

\section{Introduction} \label{sec:intro}

In this paper, we demonstrate a surprisingly tight connection between two types of homomorphisms of commutative rings of very different origins. We stress that no additional hypotheses, such as being noetherian or coherent, are necessary.

\begin{maintheorem} [see Theorem~\ref{thm:flat is silting}]
Let $A$ be a commutative ring and $\lambda\colon A \longrightarrow B$ be a ring homomorphism. Then $\lambda$ is a flat epimorphism if and only if $\lambda$ is a silting epimorphism (and $B$ is automatically commutative in this case).
\end{maintheorem}

The former of the two notions, flat ring epimorphism, is classical; see~\cite[Chapitre~IV]{Laz} and the references there. A homomorphism of rings $\lambda\colon A \longrightarrow B$ is a \emph{flat epimorphism} (see Definition~\ref{def:flat-epi}) if it is an epimorphism in the category of rings and $B$ is flat as a right $A$-module. If $A$ is commutative, so is $B$; and the most well known representatives of flat epimorphisms are localizations $A\la S^{-1}A$ at multiplicative sets. But there are more, as already noted in~\cite[\S5]{Laz}.

From a geometric point of view, the map $\lambda^\flat\colon\Spec{B}\la\Spec{A}$ induced by a flat epimorphism of commutative rings $\lambda$ is a homeomorphism onto its image by~\cite[Corollaire~IV.2.2]{Laz}, and $\lambda$ is determined up to equivalence by $\im\lambda^\flat$ by~\cite[Proposition~IV.2.5]{Laz}. More precisely, flat epiclasses of a commutative ring $A$ correspond to those subsets $U\subseteq\Spec(A)$ which become affine schemes with the sheaf induced from $\Spec{A}$.

The question of which subsets $U\subseteq\Spec(A)$ precisely induce flat epimorphisms from $A$ is subtle. Such a subset $U$ has to be closed under generalization by~\cite[Proposition~IV.2.5]{Laz} and, as explained in Theorems~\ref{thm:perfectGT and ringepi} and~\ref{thm:Gabrielcorr} below, the complement $V=\Spec{A}\setminus U$ even has to be a Thomason subset (i.~e.\ $U$ has to be an intersection of quasi-compact Zariski open subsets of $\Spec{A}$, a condition relevant only if $A$ is non-noetherian).
If $A$ is noetherian, each minimal prime in $V=\Spec{A}\setminus U$ is of height $\le 1$ by~\cite[Theorem~4.9]{AMSTV}, but all these conditions are still not sufficient; see~\cite[Example~4.13 and~\S6.1]{AMSTV}.
For any commutative $A$, the collection of all subsets of $\Spec{A}$ that induce flat epimorphisms is known to be closed under arbitrary intersections; essentially, by~\cite[Lemme~1.0']{Laz}.

\smallskip

However, our main aim here is not to give a classification of flat epimorphisms from a commutative ring $A$, but rather to prove that they all arise as a generalized localization of $A$ in a suitable sense. Namely, if $\Sigma=\{\sigma_i\colon P_i\la Q_i\}$ is a collection of maps between projective $A$-modules, we will look for ring homomorpshims $\lambda\colon A\la B$ universal with respect to the property that $\sigma\otimes_AB$ is an isomorphism for each $\sigma\in\Sigma$. If $S\subseteq A$ is a multiplicative subset, the classical localization $A\la S^{-1}A$ enjoys this property for $\Sigma=\{A\overset{s\cdot-}\la A\mid s\in S\}$. If $\Sigma$ is a set of maps between finitely generated projective modules, the universal localization in the sense of Schofield~\cite[Chap.~4]{Sch} satisfies the universal property and it is known to be flat by~\cite[Corollary~4.4]{AMSTV}, but not all flat epimorphisms (even for $A$ commutative noetherian) are universal localizations; see~\cite[\S6.4]{MS} or~\cite[Proposition~6.13]{AMSTV}. So in general we must allow $\Sigma$ to contain certain maps of infinitely generated projective modules.

This brings us to silting ring epimorphisms. Tilting modules and later tilting complexes were invented to compare categories of modules over different finite-dimensional algebras~\cite{BBtilt,HaRi}, culminating in a powerful derived Morita theory~\cite{RickardDM,Kel-DG}. However, from our perspective, a more relevant fact is the relation of tilting modules to torsion pairs and, in view of~\cite[\S1.2]{HRS}, to $t$-structures. In that context, tilting complexes were generalized to silting complexes to capture a larger class of $t$-structures~\cite{KV-aisles}. Returning from derived categories to module categories over finite-dimensional algebras, we arrive at the concept of support $\tau$-tilting module in the sense of~\cite[\S2]{AIR}, which is simply a module theoretic incarnation of a silting complex, as explained in~\cite[\S3]{AIR}. A subsequent work~\cite{AMV1} allowed us to leave the territory of finite-dimensional algebras and introduced a general definition of a silting module, generalizing support $\tau$-tilting modules from~\cite{AIR} to any rings. All in all, the main aim was still to have a tool to be able to encode a nice enough torsion pair in a module category by a single silting module. Specializing back to commutative algebra, such torsion pairs are precisely those given in terms of supports in Thomason sets \cite{AH}.

One might wonder what the previous paragraph has to do with ring theoretic localizations. On the one hand, it was noticed that examples of tilting modules can be constructed directly from classical or universal localizations~\cite{GL,AHT1,AS1,AA}, which was important, for example, for the classification result \cite{AS2}. On the other hand, localization with respect to a partial tilting or silting module (i.~e.\ a suitable summand of a tilting or silting module) appeared explicitly in~\cite{CTT}, and in the context of finite-dimensional algebras also in~\cite{Jas} and~\cite[\S4]{MS-tor}. These results culminated in~\cite[Theorem~3.7]{AMV2}, which identified intervals in the lattice of torsion pairs in $\ModA$ with lattices of all torsion pairs in $\ModB$ for suitable ring epimorphisms $\lambda\colon A\la B$. Such epimorphisms $\lambda$ are constructed from projective presentations $\sigma\colon P\la Q$ of partial silting modules and, as we shall see, are universal with respect to making $\sigma$ invertible. These are the \emph{silting epimorphisms} from the main theorem.

Although silting epimorphisms $\lambda\colon A\la B$ of non-commutative rings are usually not flat, they always satisfy $\Tor{1}{A}{B}{B}=0$. If $A$ is commutative noetherian, this is in fact equivalent to flatness of $B$ by~\cite[Proposition~4.5]{AMSTV}. Although this criterion does not work for commutative non-noetherian rings, we prove that silting epimorphisms are still flat for them (Theorem~\ref{thm:flat is silting}). More importantly, we prove in Theorem~\ref{thm:from flat to silting} that all flat epimorphisms of commutative rings are silting, and a combination of these two results yields our main theorem.

\smallskip

To summarize, we have the following hierarchy of equivalence classes of ring epimorphisms originating from a commutative ring $A$ (see Remark~\ref{rem:silting epi}(2) for the second inclusion).
\[
\left\{
\begin{matrix}
\textrm{localizations at}\\
\textrm{multiplicative sets}\\
\lambda\colon A \longrightarrow B
\end{matrix}
\right\}
\subseteq
\left\{
\begin{matrix}
\textrm{universal}\\
\textrm{localizations}\\
\lambda\colon A \longrightarrow B
\end{matrix}
\right\}
\subseteq
\left\{
\begin{matrix}
\textrm{silting}\\
\textrm{epimorphisms}\\
\lambda\colon A \longrightarrow B
\end{matrix}
\right\}
=
\left\{
\begin{matrix}
\textrm{flat}\\
\textrm{epimorphisms}\\
\lambda\colon A \longrightarrow B
\end{matrix}
\right\}.
\]
In general, the first two inclusions are proper by~\cite[Theorem 5.13]{AMSTV} and~\cite[\S6.4]{MS}.

\medskip
\noindent\textbf{Notation.}
Throughout the paper, $A$ stands for a unital ring, $\ModA$ the category of right $A$-modules, and $\AMod$ for the category of left $A$-modules. 
We will denote by $\modA$ the subcategory of $\ModA$ formed by finitely presented modules, and by $\ProjA$ and $\projA$ the subcategories of all projective $A$-modules and of all finitely generated projective right $A$-modules, respectively.
All subcategories are assumed to be full and strict.
Furthermore, we write $\DerA$ for the unbounded derived category of $\ModA$. Given a module $M\in\ModA$,  we denote by $\Add{M}$ the class of all modules which are isomorphic to direct summands of direct sums of copies of $M$, and by $\Gen{M}$ the class of all $M$-generated modules, i.~e.~all epimorphic images of modules in $\Add M$. 
 
Given a subcategory $\Ccal\subseteq\ModA$
and a set of non-negative integers $I$ (which is usually expressed by symbols such as $\geq n$, $\leq n$, or just $n$, with the obvious meaning),
we will use the following notation:
\begin{align*}
\Ccal^{\perp_I}&=\{X\in\ModA \mid \Ext{i}{R}{\Ccal}{X}=0 \text{ for all } i\in I\}, \\
{^{\perp_I}\Ccal}&=\{X\in\ModA \mid \Ext{i}{R}{X}{\Ccal}=0\text{ for all } i\in I\}, \\
\Ccal^{\torp{I}}&=\{Y\in\AMod \mid \Tor{i}{R}{\Ccal}{Y}=0 \text{ for all } i\in I\}. \\
\end{align*}
If $\Ccal$ consists of a single module $M$, we simply write $M^{\perp_I}$, ${^{\perp_I}M}$ or $M^{\torp{I}}$, respectively.

\subsection*{Acknowledgment}
I would like to thank Leonid Positselski for helpful discussions about Gabriel filters.


\section{Preliminaries} \label{sec:prelim}

\subsection{Torsion pairs and silting modules} \label{subsec:torsion and silting}
A pair $(\Tcal,\Fcal)$ of full subcategories of an abelian category $\Acal$ is said to be a \emph{torsion pair} if $\Hom{\Acal}{\Tcal}{\Fcal}=0$ and for every $M$ in $\Acal$ there are $T$ in $\Tcal$, $F$ in $\Fcal$ and a short exact sequence
$$\xymatrix{0\ar[r]&T\ar[r]^f&M\ar[r]^g&F\ar[r]&0}.$$
For a torsion pair $(\Tcal,\Fcal)$, we say that $\Tcal$ is a \emph{torsion class} and $\Fcal$ is a \emph{torsion-free class}. If $\Acal$ admits arbitrary (set-indexed) products and coproducts, it is easy to see that $\Tcal$ is closed under coproducts, extensions and epimorphic images, while $\Fcal$ is closed under products, extensions and subobjects. If $\Acal$ is also well powered (that is, if the class of subobjects of any given object forms a set), then these closure conditions characterize torsion classes and torsion-free classes (see \cite{Di}).
This is the case in particular if $\Acal=\ModA$.
We further say that a torsion pair $(\Tcal,\Fcal)$ is \emph{hereditary} if $\Tcal$ is closed under subobjects, and we say that it is of \emph{finite type} if $\Fcal$ is closed under direct limits.

Given a ring $A$, we will also consider the category $\MorA$ of morphisms of right $A$-modules. Objects in this category are precisely morphisms $\gamma\colon M_1\longrightarrow M_0$ of $A$-modules, and the morphisms are given by commutative squares of the form
\[ \xymatrix{M_1\ar[d]\ar[r]^\gamma & M_0\ar[d]\\N_1\ar[r]^{\delta} & N_0.} \]
We will denote the object of $\MorA$ corresponding to a morphism $\gamma\colon M_1\longrightarrow M_0$ in $\ModA$ by $C_\gamma$. Thus, the commutative square above corresponds to a morphism $C_\gamma\la C_\delta$ in $\MorA$.
Similarly, we will denote by $\AMor$ the category of morphisms of the left $A$-modules and use an analogous notation for objects there.
 
It is well known that $\MorA$ is equivalent to the category of right modules $\rmod{T_2(A)}$ over the $2\times 2$ upper triangular matrix ring $T_2(A)$ over $A$; see e.~g.\ \cite[Section~III.2]{ARS}. Analogously, $\AMor$ is equivalent to $\lmod{T_2(A)}$. Transporting the structure of the tensor product over $T_2(A)$ through these equivalences, we obtain a bifunctor
\begin{equation}\label{eq:matrix-tensor}
\otimes_{T_2(A)}\dd \MorA\times\AMor\la\Ab.
\end{equation}
By tracing the equivalences, we can give an explicit formula for this tensor product. If $\gamma\dd M_1\la M_0$ is a morphism in $\ModA$ and $\beta\dd N^0\la N^1$ is a morphism in $\AMod$, then $C_\gamma\otimes_{T_2(A)}C_\beta$ is given by the pushout of abelian groups:
\begin{equation}\label{eq:tensor maps}
\vcenter{\xymatrix{
M_1\otimes_AN^0 \ar[r]^{\gamma\otimes_AN^0} \ar[d]_{M_1\otimes_A\beta} & M_0\otimes_AN^0 \ar[d] \\
M_1\otimes_AN^1 \ar[r] & C_\gamma\otimes_{T_2(A)}C_\beta.
}}
\end{equation}
We will also use the corresponding notation for the left derived functors 
\[
\operatorname{Tor}^{T_2(A)}_{i}\dd \MorA\times\AMor\la\Ab.
\]
and given a set $I$ of non-negative integers and $\Ccal\subseteq\MorA$, we will use the notation for orthogonal classes $\Ccal^{\torp{I}}\subseteq\AMor$ in this context.

The subcategory of $\MorA$ whose objects are morphisms in $\ProjA$, i.~e.\ morphisms between projective $A$-modules, will be denoted by $\MorpA$.
There is an obvious full functor
\[ \MorpA \la \DerA \]
that sends $C_\gamma = (M_1\overset{\gamma}\la M_0)$ to a two-term complex concentrated in homological degrees $1$ and $0$ (which is also the mapping cone of $\gamma$). Abusing notation, we will also denote this complex by $C_\gamma\in\DerA$.
Given a morphism $\sigma$ in $\ProjA$, we will consider the class of \emph{$\sigma$-divisible modules} (see~\cite[\S4.3]{MS}) 
\begin{align*}
\Dcal_\sigma&=\{X\in \ModA \mid \Hom{A}{\sigma}{X}\ \text{is surjective} \} \\
&=\{X\in \ModA \,\mid\, \Hom{\DerA}{C_\sigma}{X[1]}=0 \}.
\end{align*}
If $\Sigma$ is a set of morphisms in $\ProjA$, we set $\Dcal_\Sigma=\bigcap_{\sigma\in\Sigma} \Dcal_\sigma$ and call the modules in this class \emph{$\Sigma$-divisible}. Dually, we will consider the class of \emph{$\sigma$-torsion-free} left modules
\[
\Fcal_\sigma=\{Y\in \AMod \mid \sigma\otimes_AY\ \text{is injective} \}
\]
denote $\Fcal_\Sigma=\bigcap_{\sigma\in\Sigma} \Fcal_\sigma$ and call the modules in this class \emph{$\Sigma$-torsion-free}.
A word of warning is due here: $\Fcal_\Sigma$ is not always a torsion-free class, conditions for this are discussed in the rest of this subsection.

We need the following easy observations about the category $\MorA$ and the classes of divisible and torsion-free modules just defined.

\smallskip
\begin{proposition}\label{prop:morphismcat}
Let $A$ be a ring.
\begin{enumerate}
\item The following are equivalent for an object $C_\gamma$ in $\MorA$:
\begin{enumerate}
\item $C_\gamma$ is projective in $\MorA$;
\item $\gamma$ is in $\Add(\{{\rm id}:P\longrightarrow P \mid P\in\ProjA \} \cup \{ 0\longrightarrow P\mid P\in\ProjA \})$
\item $\gamma$ is a split monomorphism of projective $A$-modules.
\end{enumerate} 
In particular, projective objects are contained in the subcategory $\MorpA$ of $\MorA$.

\item Given a morphism $\sigma\dd P_1\la P_0$ in $\ProjA$, then
\[ C_\sigma^{\perp_1} = \{C_\gamma\in\MorA\,\mid\,\Coker\gamma\in\Dcal_\sigma\}\subseteq\MorA. \]

\item Given a morphism $\sigma\dd P_1\la P_0$ in $\ProjA$, then
\[ C_\sigma^{\torp1} = \{C_\beta\in\AMor\,\mid\,\Ker\beta\in\Fcal_\sigma\}\subseteq\AMor. \]
\end{enumerate}
\end{proposition}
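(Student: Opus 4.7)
The plan is to exploit the equivalence $\MorA\simeq\rmod{T_2(A)}$ recalled just above. Under it, the two indecomposable summands of $T_2(A)$ corresponding to the diagonal idempotents match, respectively, the objects $C_{\id_A}\colon A\to A$ and $C_{0\to A}\colon 0\to A$ in $\MorA$. Projective objects of $\MorA$ are therefore summands of direct sums of copies of these two, which is the equivalence (a)$\Leftrightarrow$(b). For (b)$\Rightarrow$(c) I would just note that both generating objects are split monomorphisms of projective $A$-modules, and this property is closed under direct sums and summands. For (c)$\Rightarrow$(a), given a split mono $\gamma\colon P_1\to P_0$ of projectives, the splitting $P_0\cong P_1\oplus Q$ with $Q=\Coker\gamma$ projective yields $C_\gamma\cong C_{\id_{P_1}}\oplus C_{0\to Q}$, a sum of projectives. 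The ``in particular'' clause is immediate from (c).

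\textbf{Key resolution for (2) and (3).} The common engine for the remaining parts is the following short projective resolution of $C_\sigma$ in $\MorA$:
\[
0 \to C_{0\to P_1} \to C_{\id_{P_1}}\oplus C_{0\to P_0} \to C_\sigma \to 0,
\]
where the right-hand map is $\id_{P_1}$ on the top row and $(\sigma,\id_{P_0})\colon P_1\oplus P_0\to P_0$ on the bottom, and the subobject $C_{0\to P_1}$ embeds via $(\id_{P_1},-\sigma)\colon P_1\to P_1\oplus P_0$ on the bottom (and $0\to P_1$ on top). Exactness is a direct check. In particular, $C_\sigma$ has projective dimension at most $1$ in $\MorA$, so $\Ext^1$ and $\Tor_1$ against $C_\sigma$ can be read off from a two-term complex.

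\textbf{Part (2).} Applying $\Hom{\MorA}{-}{C_\gamma}$ with $C_\gamma\colon M_1\to M_0$ to the resolution, a short unpacking identifies
\[
\Ext{1}{\MorA}{C_\sigma}{C_\gamma} \;\cong\; \Hom{A}{P_1}{M_0}\,\big/\,\bigl(\gamma_*\Hom{A}{P_1}{M_1} + \sigma^*\Hom{A}{P_0}{M_0}\bigr).
\]
Vanishing means every $h\colon P_1\to M_0$ is expressible as $\gamma f_1 - g\,\sigma$, and I would verify equivalence with $\Coker\gamma\in\Dcal_\sigma$ by a routine diagram chase: project $h$ to $\Coker\gamma$, factor through $\sigma$ by divisibility, lift back through projectivity of $P_0$, and then correct the discrepancy by a map through $\gamma$ using projectivity of $P_1$. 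The converse is equally direct.

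\textbf{Part (3) and the main obstacle.} Tensoring the same resolution with $C_\beta\colon N^0\to N^1$ and evaluating each term via the pushout formula \eqref{eq:tensor maps} yields $C_{0\to P_1}\otimes_{T_2(A)}C_\beta = P_1\otimes_A N^0$, and $(C_{\id_{P_1}}\oplus C_{0\to P_0})\otimes_{T_2(A)}C_\beta = (P_1\otimes_A N^1)\oplus(P_0\otimes_A N^0)$, with induced differential $x\mapsto\bigl((P_1\otimes\beta)(x),\,-(\sigma\otimes N^0)(x)\bigr)$. Hence $\Tor{1}{T_2(A)}{C_\sigma}{C_\beta}$ equals $\Ker(P_1\otimes\beta)\cap\Ker(\sigma\otimes N^0)$. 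Flatness of $P_1$ gives $\Ker(P_1\otimes\beta)=P_1\otimes_A\Ker\beta$ inside $P_1\otimes N^0$, and the restriction of $\sigma\otimes N^0$ to this subgroup is $\sigma\otimes_A\Ker\beta$. Therefore $\Tor_1$ is the kernel of $\sigma\otimes\Ker\beta$, which vanishes iff $\Ker\beta\in\Fcal_\sigma$. The main obstacle throughout is not conceptual but bookkeeping: writing down the resolution with the correct signs and tracking the induced maps on Hom and on the pushout; once the resolution is in place, both (2) and (3) fall out of a single short computation.
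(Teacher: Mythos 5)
Your proofs of all three parts are correct, but the route is genuinely different from the paper's. The paper treats Proposition~\ref{prop:morphismcat} largely by citation: part~(1) is delegated to~\cite[Proposition~2.5]{MV}, part~(2) to~\cite[Lemma~5.4]{MS}, and for part~(3) the paper applies the character dual $(-)^+$ to reduce the $\Tor_1$-orthogonal to an $\Ext^1$-orthogonal, $C_\sigma^{\torp1}={^{\perp_1}(C_{\sigma^+})}$, and then invokes a ``dual analog version'' of~\cite[Lemma~5.4]{MS} for the morphism $\sigma^+$ of injective left modules. You instead build the explicit length-one projective resolution
\[
0 \longrightarrow C_{0\to P_1} \longrightarrow C_{\id_{P_1}}\oplus C_{0\to P_0} \longrightarrow C_\sigma \longrightarrow 0
\]
in $\MorA$ and compute $\Ext{1}{\MorA}{C_\sigma}{-}$ and $\Tor{1}{T_2(A)}{C_\sigma}{-}$ directly from it; I have checked the exactness of the sequence, the identification of the connecting maps, and the reduction $\Ker(P_1\otimes\beta)\cap\Ker(\sigma\otimes N^0)=\Ker(\sigma\otimes_A\Ker\beta)$ using flatness of $P_0$ as well as $P_1$ -- all of this is sound. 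What your approach buys is a self-contained, uniform engine: both~(2) and~(3) fall out of the same resolution, and the signs and identifications are fully explicit, whereas the paper's duality argument for~(3), while slicker and emphasizing the parallel with~(2), leaves the ``dual analog'' step to the reader and depends on external references. Your version is closer in spirit to the paper's own later remark that objects of $\widetilde\Ccal$ have projective dimension at most one in $\MorA$, and in fact makes that claim effective.
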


\begin{proof}
We refer to \cite[Propostion 2.5]{MV} for (1) and to \cite[Lemma 5.4]{MS} for (2). 

(3) The character dual $(-)^+\dd\Hom{\Z}{-}{\QQ}$ induces an obvious functor $\MorA^\op\la\AMor$ which is compatible with the equivalences with modules over triangular matrix rings. Given $C_\beta\in\AMor$, this gives us a well-known isormophism $\big(\Tor{1}{T_2(A)}{C_\sigma}{C_\beta}\big)^+\cong\Ext{1}{\AMor}{C_\beta}{C_{\sigma^+}}$. In particular, $C_\sigma^{\torp1}={^{\perp_1}(C_{\sigma^+})}$ and, as $\sigma^+$ is a morphism of injective left $A$-modules, a dual analog version of \cite[Lemma 5.4]{MS} shows that $C_\beta\in{^{\perp_1}(C_{\sigma^+})}$ if and only if $\Hom{A}{\Ker\beta}{\sigma^+}\cong(\sigma\otimes_A\Ker\beta)^+$ is surjective. The latter condition is equivalent to $\sigma\otimes_A\Ker\beta$ being injective.
\end{proof}

Classes of modules of the form $\Dcal_\Sigma$ are always closed under extensions, quotients, and products, see~\cite[Lemma~3.6(1)]{AMV1}. Dually, classes of the form $\Fcal_\Sigma$ are always closed under extensions, submodules and coproducts. So $\Dcal_\Sigma$ is a torsion class if and only if it is closed under direct sums, and $\Fcal_\Sigma$ is a torsion-free class if and only if it happens to be closed under products.

\begin{definition}[{\cite[Definitions~2.1 and~3.7]{AMV1}}]\label{def:p silting}
We say that an $A$-module $T$ is
\begin{itemize}
\item  \emph{partial silting} if it admits a projective presentation $ P\stackrel{\sigma}{\longrightarrow} Q\longrightarrow T\longrightarrow 0$ such that 
$\mathcal{D}_\sigma$ is a torsion class containing $T$;
\item \emph{silting} if it admits  a projective presentation $ P\stackrel{\sigma}{\longrightarrow} Q\longrightarrow T\longrightarrow 0$ such that 
$\mathcal{D}_\sigma=\Gen{T}$,
\item \emph{tilting}  if $T^{\perp_1}=\Gen{T}$, or equivalently (cf.\ \cite[Proposition 3.13]{AMV1}), $T$ is silting with respect to a monomorphic projective presentation $\sigma$.
\end{itemize}
 The torsion class $\Gen{T}$ generated by a silting (respectively, tilting) module $T$ is called a \emph{silting} (respectively, \emph{tilting}) \emph{class}. Two silting modules $T$ and $T'$ are said to be \emph{equivalent} if they generate the same silting class, which amounts to having the same additive closure $\Add{T}=\Add{T^\prime}$ by discussion after~\cite[Proposition~3.10]{AMV1}.
\end{definition}

For a morphism $\sigma\dd P_1\la P_0$ between projective $A$-modules, we also consider the following subcategories of $\DerA$,
\begin{align*}
\Ucal_\sigma&=\{X\in\DerA\mid\Hom{\DerA}{C_\sigma}{X[>0]}=0\}, \\
\Vcal_\sigma&=\{X\in\DerA\mid\Hom{\DerA}{C_\sigma}{X[\leq0]}=0\}.
\end{align*}
We say that $C_\sigma$ is a (2-term) \emph{silting complex} if $(\Ucal_\sigma,\Vcal_\sigma)$ is a t-structure, that is, if $\Hom{\DerA}{\Ucal_\sigma}{\Vcal_\sigma}=0$ and every object $X$ of $\DerA$ fits into a triangle
\[ U\la X\la V\la U[1] \]
with $U$ in $\Ucal_\sigma$ and $V$ in $\Vcal_\sigma$. The next proposition follows from \cite[Lemma 6.2]{MS} and \cite[Theorem 4.9]{AMV1}. 

\begin{proposition}\label{prop:silting to tilting reduction}
The following are equivalent for an $A$-module $T$ with a projective presentation $\sigma$.
\begin{enumerate}
\item $T$ is silting with respect to $\sigma$;
\item $C_\sigma\oplus C_{\id_A}$ is tilting in $\MorA$;
\item $C_\sigma$ is a silting complex in $\DerA$.
\end{enumerate}
\end{proposition}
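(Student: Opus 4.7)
The plan is to prove $(1)\Leftrightarrow(2)$ and $(1)\Leftrightarrow(3)$ independently: the first is a direct comparison inside the morphism category $\MorA\simeq\rmod{T_2(A)}$ enabled by Proposition~\ref{prop:morphismcat}, while the second invokes the Happel--Reiten--Smal\o{} style correspondence between torsion pairs in $\ModA$ and two-term $t$-structures in $\DerA$.

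For $(1)\Leftrightarrow(2)$, set $S:=C_\sigma\oplus C_{\id_A}$. Because $C_{\id_A}$ is projective in $\MorA$ by Proposition~\ref{prop:morphismcat}(1), Proposition~\ref{prop:morphismcat}(2) yields
\[ S^{\perp_1}=C_\sigma^{\perp_1}=\{C_\gamma\in\MorA\mid\Coker\gamma\in\Dcal_\sigma\}. \]
Since the cokernel functor $\MorA\to\ModA$ is right exact with $\Coker C_{\id_A}=0$ and $\Coker C_\sigma=T$, one further checks that $\Gen{S}=\{C_\gamma\in\MorA\mid\Coker\gamma\in\Gen{T}\}$: the forward inclusion is automatic from right exactness, while the reverse amounts to lifting a surjection $T^{(J)}\twoheadrightarrow\Coker\gamma$ to $Q^{(J)}\to M_0$ by projectivity, adding copies of $C_{\id_A}$ to cover the top component of $C_\gamma$, and using $T=\Coker\sigma$ to arrange compatibility with $\gamma$. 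As every module occurs as $\Coker\gamma$ for some $C_\gamma$, comparing the two descriptions identifies the tilting equality $S^{\perp_1}=\Gen{S}$ with $\Dcal_\sigma=\Gen{T}$, i.e.\ with~(1).

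For $(1)\Leftrightarrow(3)$, the starting observation is $\Dcal_\sigma=\Ucal_\sigma\cap\ModA$ (viewing modules as complexes concentrated in degree $0$), which follows from the definitions since $C_\sigma$ is a two-term complex of projectives. Assuming~(1), the pair $(\Ucal_\sigma,\Vcal_\sigma)$ is built as the HRS-tilt of the finite-type torsion pair $(\Dcal_\sigma,\Dcal_\sigma^{\perp_0})=(\Gen{T},\Gen{T}^{\perp_0})$: the approximation triangle for a given $X\in\DerA$ is assembled by reducing to the case $X=M$ a module via the standard truncations in $\DerA$, splitting $M$ along the torsion sequence $0\to tM\to M\to M/tM\to 0$, and covering $M/tM$ by direct sums of copies of $C_\sigma$, the equality $\Gen{T}=\Dcal_\sigma$ guaranteeing that the cofibers stay in $\Vcal_\sigma$. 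Conversely, if $C_\sigma$ is a silting complex, intersecting $\Ucal_\sigma$ with $\ModA$ exhibits $\Dcal_\sigma$ as a torsion class, and applying the canonical triangle to $T$ pins down $\Gen{T}=\Dcal_\sigma$.

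The step I anticipate to be the main obstacle is the construction of the approximation triangles in the direction $(1)\Rightarrow(3)$: for a general complex $X$ the truncation must be assembled from torsion decompositions on each cohomology combined with iterated presentations along $\sigma$, and verifying that the resulting cofiber lies in $\Vcal_\sigma$ is the non-trivial use of the silting hypothesis $\Gen{T}=\Dcal_\sigma$. Since the paper attributes the proposition to \cite[Lemma~6.2]{MS} and \cite[Theorem~4.9]{AMV1}, I would import this approximation construction from the latter reference rather than redo it from scratch.
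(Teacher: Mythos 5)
Your approach matches the paper's: the paper's proof is simply the two citations $(1)\Leftrightarrow(2)$ from \cite[Lemma~6.2]{MS} and $(1)\Leftrightarrow(3)$ from \cite[Theorem~4.9]{AMV1}, and you likewise defer $(1)\Leftrightarrow(3)$ to \cite[Theorem~4.9]{AMV1} (indeed, the identification of $\Ucal_\sigma$ with the HRS aisle of $(\Gen T,\Gen T^{\perp_0})$ is exactly the nontrivial content of that result, so your ``HRS'' reduction is not an independent argument so much as a restatement of what is being imported). Your direct computation for $(1)\Leftrightarrow(2)$ is correct and amounts to re-deriving \cite[Lemma~6.2]{MS} inside $\MorA\simeq\rmod{T_2(A)}$: the equalities $S^{\perp_1}=C_\sigma^{\perp_1}=\{C_\gamma\mid\Coker\gamma\in\Dcal_\sigma\}$ and $\Gen S=\{C_\gamma\mid\Coker\gamma\in\Gen T\}$ (the latter via lifting along $Q^{(J)}$, covering $M_1$ by copies of $C_{\id_A}$, and lifting $P_1^{(J)}\to\operatorname{Im}\gamma$ to $M_1$) together with the surjectivity of $\Coker\colon\MorA\to\ModA$ on objects do give the desired equivalence.
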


\begin{remark}\label{rem:silting proj pres}
As explained in~\cite[Remark~3.8(2)]{AMV1}, the class $\Dcal_\sigma$ in the definition of a partial silting module depends on the chosen projective presentation $\sigma$ and not just on $T$---it can happen that the same module $T$ is partial silting with respect to different projective presentations giving different torsion classes $\Dcal_\sigma$.

However, if $T$ is silting with respect to two projective presentations $\sigma$ and $\sigma'$, then necessarily $\Add C_\sigma=\Add C_{\sigma'}$ in $\DerA$.
This follows from the facts that
\begin{enumerate}
\item $\Ucal_\sigma=\Ucal_{\sigma'}=\{X\in\DerA\mid H_{<0}X=0\textrm{ and }H_0X\in\Gen T\}$ by~\cite[Theorem~4.9]{AMV1} and
\item $\Add C_\sigma=\{X\in\Ucal_\sigma\mid\Hom{\DerA}{X}{\Ucal_\sigma[1]}=0\}$ by \cite[Lemma~4.5]{AMV1}.
\end{enumerate}
\end{remark}

The following result says that each partial silting module can be completed to a silting module associated with the same torsion class. In particular, divisibility classes for projective presentations of partial silting modules are silting classes themselves.

\begin{theorem}[{\cite[Theorem 3.12]{AMV1}}]\label{thm:completion}
Let $T_1$ be a partial silting $A$-module with respect to a projective presentation $\sigma_1$. Then there exists a silting $A$-module $T$ with respect to a projective presentation $\sigma$ such that $C_{\sigma_1}$ is a direct summand of $C_\sigma$ in $\MorA$ and $\Dcal_{\sigma_1}=\Dcal_\sigma=\Gen T$.
\end{theorem}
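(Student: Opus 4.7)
The plan is to pass to the morphism category $\MorA$ and perform a Bongartz-style completion there. By Proposition~\ref{prop:silting to tilting reduction}, constructing a silting module $T$ with a projective presentation $\sigma$ such that $C_{\sigma_1}$ is a direct summand of $C_\sigma$ amounts to constructing $\sigma_2$ between projective $A$-modules such that $U\oplus V := C_{\sigma_1}\oplus C_{\id_A}\oplus C_{\sigma_2}$ is a tilting object of $\MorA\simeq\rmod{T_2(A)}$.

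The first observation is that $U:=C_{\sigma_1}\oplus C_{\id_A}$ is already rigid in $\MorA$: the $\Ext^1$'s with source $C_{\id_A}$ vanish by projectivity (Proposition~\ref{prop:morphismcat}(1)), and the remaining groups $\Ext^1_{\MorA}(C_{\sigma_1}, C_{\sigma_1}^{(\kappa)})$ and $\Ext^1_{\MorA}(C_{\sigma_1}, C_{\id_A}^{(\kappa)})$ vanish by Proposition~\ref{prop:morphismcat}(2), since $\Dcal_{\sigma_1}$ is a torsion class (hence closed under arbitrary coproducts) containing $T_1$ and $0$. Since $\MorA$ has $C_{\id_A}\oplus C_{0\to A}$ as its canonical projective generator and $C_{\id_A}$ is already inside $U$, the Bongartz completion only needs to account for the missing generator $C_{0\to A}$. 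Accordingly, I would form the universal extension
\[0 \to C_{0\to A} \to V \to U^{(\kappa)} \to 0\]
in $\MorA$, where $\kappa$ indexes a generating set of the abelian group $\Ext^1_{\MorA}(U,C_{0\to A})=\Ext^1_{\MorA}(C_{\sigma_1},C_{0\to A})$. A diagram chase through the long exact sequence of $\Ext^*_{\MorA}(U,-)$, together with the rigidity of $U$ established above, forces $V\in U^{\perp_1}$, and the standard Bongartz argument then promotes $U\oplus V$ to a tilting object of $\MorA$.

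A crucial feature is that $V$ automatically lands in $\MorpA$: writing $V=C_{\sigma_2}$, the source component of the sequence reads $0\to 0\to V_1\to (P_1\oplus A)^{(\kappa)}\to 0$, so $V_1\cong(P_1\oplus A)^{(\kappa)}$ is projective, while the target component $0\to A\to V_0\to (Q_1\oplus A)^{(\kappa)}\to 0$ splits by projectivity of the right-hand term, so $V_0$ is projective as well. Setting $\sigma:=\sigma_1\oplus\sigma_2$ and $T:=\Coker\sigma$, Proposition~\ref{prop:silting to tilting reduction} delivers that $T$ is silting with respect to $\sigma$, and $C_{\sigma_1}$ is by construction a direct summand of $C_\sigma$. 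For the final identity $\Dcal_\sigma=\Dcal_{\sigma_1}$, the inclusion $\Dcal_\sigma\subseteq\Dcal_{\sigma_1}$ is automatic, and for $\Dcal_{\sigma_1}\subseteq\Dcal_{\sigma_2}$ I would argue as follows: for $X\in\Dcal_{\sigma_1}$, Proposition~\ref{prop:morphismcat}(2) gives $C_{0\to X}\in C_{\sigma_1}^{\perp_1}$; applying $\Ext^1_{\MorA}(-,C_{0\to X})$ to the Bongartz sequence and using the projectivity of $C_{0\to A}$ then yields $\Ext^1_{\MorA}(V,C_{0\to X})=0$, which re-translates via Proposition~\ref{prop:morphismcat}(2) into $X\in\Dcal_{\sigma_2}$.

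The main obstacle, as I see it, is two-fold: orchestrating the Bongartz completion cleanly inside $\MorA$ rather than over $A$ directly, and observing that the Bongartz complement admits a representative in $\MorpA$. This latter point---a small but essential coincidence driven by projective quotients splitting---is exactly what allows the bridge back from $\MorA$ to $\ModA$ via Proposition~\ref{prop:silting to tilting reduction} to conclude.
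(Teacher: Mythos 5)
Your proposal is correct and follows essentially the same strategy as the cited proof in [AMV1]: transfer the problem to $\MorA\simeq\rmod{T_2(A)}$ via Proposition~\ref{prop:silting to tilting reduction}, note that $U=C_{\sigma_1}\oplus C_{\id_A}$ is a partial tilting object there, and close up the missing projective summand $C_{0\to A}$ by a Bongartz-type universal extension, observing that the complement automatically lies in $\MorpA$ because its two components inherit projectivity from the short exact sequences on domains and codomains.

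One step is glossed over under the label ``the standard Bongartz argument'' and is worth making explicit, since it is precisely where the partial silting hypothesis enters. For infinitely generated modules the single-step universal extension $0\to C_{0\to A}\to V\to U^{(\kappa)}\to 0$ does \emph{not} in general produce a tilting object: one needs $\Ext{1}{\MorA}{U}{V^{(\lambda)}}=0$ for all cardinals $\lambda$, which is not a formal consequence of $V\in U^{\perp_1}$. What saves the day here is that by Proposition~\ref{prop:morphismcat}(2), $U^{\perp_1}=\{C_\gamma\mid\Coker\gamma\in\Dcal_{\sigma_1}\}$, and since $\Coker\colon\MorA\to\ModA$ preserves coproducts and $\Dcal_{\sigma_1}$ is a torsion class, $U^{\perp_1}$ is itself closed under coproducts; hence $V\in U^{\perp_1}$ already gives $V^{(\lambda)}\in U^{\perp_1}$, from which $\Ext{1}{\MorA}{V}{V^{(\lambda)}}=0$ and the remaining tilting conditions follow by the long exact sequence as you indicate. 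You invoke this coproduct-closure to prove rigidity of $U$, so you clearly have the ingredient in hand, but spelling out that it is also what justifies the one-step completion (and thus removes the need for a transfinite iteration or an Eklof--Trlifaj preenvelope) would make the argument airtight.
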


Finally, we recall that even though silting classes are associated to a map between potentially large projectives, they are determined by a set of maps between finitely generated projective modules.

\begin{theorem}[{\cite[Theorem 2.3]{AH}, \cite[Theorem 6.3]{MS}}]\label{thm:finite type silting}
A torsion class $\Tcal\subseteq\ModA$ is silting (i.~e.\ it is of the form $\Tcal=\Gen{T}$ for some silting module $T$) if and only if it is of the form $\Tcal=\Dcal_\Sigma$ for a set $\Sigma$ of morphisms in $\projA$.
\end{theorem}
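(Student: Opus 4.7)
The plan is to prove the two implications separately: the ``if'' direction uses Theorem~\ref{thm:completion} to promote a partial silting witness into a silting module, and the ``only if'' direction rests on the finite-type property of silting classes, which is the technical heart of the two cited references.

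For the ``if'' direction, suppose $\Sigma=\{\sigma_i\dd P_i\la Q_i\}_{i\in I}$ is a set of morphisms in $\projA$. I first verify that $\Dcal_\Sigma$ is a torsion class: by \cite[Lemma~3.6(1)]{AMV1} it is automatically closed under products, extensions, and quotients, and since each $P_i$ and $Q_i$ is finitely presented, the functors $\mathrm{Hom}_A(P_i,-)$ and $\mathrm{Hom}_A(Q_i,-)$ commute with arbitrary direct sums and with filtered colimits, so $\Dcal_\Sigma$ is additionally closed under coproducts and even direct limits. To exhibit $\Dcal_\Sigma$ as a silting class, I would form $\sigma=\bigoplus\sigma_i$, enlarged by a standard transfinite fixed-point construction so as to include enough copies of each $\sigma_i$ to guarantee that every morphism $P_i\la T_0$ (with $T_0:=\Coker\sigma$) factors through the appropriate copy of $\sigma_i$. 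The stabilized cokernel $T_0$ then lies in $\Dcal_\sigma=\Dcal_\Sigma$, so $T_0$ is partial silting with respect to $\sigma$, and Theorem~\ref{thm:completion} completes $T_0$ to a silting module $T$ with $\Gen T=\Dcal_\sigma=\Dcal_\Sigma$.

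For the ``only if'' direction, suppose $\Tcal=\Gen T=\Dcal_\sigma$ is silting with $\sigma\dd P\la Q$ a projective presentation of $T$. The crucial input, which is the technical content of \cite[Theorem~2.3]{AH} and \cite[Theorem~6.3]{MS}, is that every silting torsion class is of \emph{finite type}, i.~e.\ closed under direct limits. Granted this, I would let $\Sigma\subseteq\projA$ be a representative set of morphisms $\tau\dd P'\la Q'$ between finitely generated projectives with $\Tcal\subseteq\Dcal_\tau$; the inclusion $\Tcal\subseteq\Dcal_\Sigma$ is then tautological. For the reverse inclusion $\Dcal_\Sigma\subseteq\Dcal_\sigma=\Tcal$, I would write $P$ and $Q$ as filtered colimits of finitely generated projective submodules and approximate $\sigma$ by a filtered system of morphisms between them; the finite-type closure of $\Tcal$ forces all sufficiently late approximating morphisms to belong to $\Sigma$, and a standard colimit argument using finite presentation of the approximants lets divisibility with respect to them transfer to divisibility with respect to $\sigma$.

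The main obstacle is the finite-type property of silting classes used in the ``only if'' half. This cannot be extracted from the preliminaries alone and is precisely what \cite[Theorem~2.3]{AH} and \cite[Theorem~6.3]{MS} are designed to supply; the ``if'' direction, by contrast, is a fairly direct consequence of the completion theorem already recalled above.
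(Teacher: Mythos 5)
The paper offers no proof of this statement---it is imported verbatim from \cite[Theorem 2.3]{AH} and \cite[Theorem 6.3]{MS}---so the only comparison available is between your sketch and the arguments in those references (and the related machinery the paper develops in Proposition~\ref{prop:loc at proj silting} and Example~\ref{expl:partial silting univ loc}). Your ``if'' direction is essentially right and is exactly how the paper itself handles this situation: since the $\sigma_i$ live in $\projA$, the class $\Dcal_\Sigma$ is closed under coproducts (indeed under direct limits), a Fuchs--Salce-type iteration produces a single $\sigma$ with $\Coker\sigma\in\Dcal_\sigma=\Dcal_\Sigma$ (one does need to check both inclusions $\Dcal_\Sigma\subseteq\Dcal_\sigma$, via the filtration by copies of the $C_{\sigma_i}$, and $\Dcal_\sigma\subseteq\Dcal_\Sigma$, via the length-one components), and Theorem~\ref{thm:completion} finishes.

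The ``only if'' direction is where there is a genuine gap, and it is not fully covered by your deferral to the references, because the mechanism you sketch for deriving $\Dcal_\Sigma\subseteq\Dcal_\sigma$ would not work even granting finite type of $\Tcal$. Concretely: (a) an infinitely generated projective $P$ is not a filtered union of finitely generated \emph{projective} submodules in general, and while one can write $\sigma$ as a filtered colimit of maps $\tau_j\colon F_j\la G_j$ between finitely generated free modules, surjectivity of $\Hom{A}{\sigma}{X}$ is then an \emph{inverse} limit condition on the $\Hom{A}{\tau_j}{X}$, and inverse limits of surjections need not be surjective, so divisibility does not transfer from the approximants to $\sigma$ by ``a standard colimit argument''; (b) more importantly, there is no reason that ``sufficiently late'' approximants $\tau_j$ satisfy $\Tcal\subseteq\Dcal_{\tau_j}$, i.e.\ lie in your set $\Sigma$: surjectivity of $\Hom{A}{\sigma}{X}$ for $X\in\Tcal$ gives no control over $\Hom{A}{\tau_j}{X}$, and finite type of $\Tcal$ does not repair this, since the issue is not a direct-limit closure of $\Tcal$ but the failure of divisibility to localize along the approximation of $\sigma$. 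The actual proofs in \cite{AH} and \cite{MS} do not proceed this way: they exploit the definability of silting classes and elementary duality with cosilting classes (equivalently, a pp-formula/purity analysis) to manufacture the set $\Sigma\subseteq\projA$. So the correct summary is that the entire ``only if'' direction---not merely the finite-type property---is the content being imported from the references, and the reduction you propose from that content to the stated conclusion does not go through as written.
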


As a consequence, we have a criterion for the classes of the form $\Fcal_\sigma$ introduced above to be torsion-free classes.

\begin{corollary}\label{cor:Sigma-torsion-free-classes}
Suppose that $T\in\ModA$ is a partial silting module with respect to a projective presentation $\sigma$. Then the class $\Fcal_\sigma$ of the $\sigma$-torsion-free modules is a torsion-free class in $\AMod$.
\end{corollary}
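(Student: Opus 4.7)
The plan is to use the finite type presentation theorems to reduce to morphisms between finitely generated projective modules, where tensor product is well behaved with respect to products, and then to use character duality to translate the tensor condition defining $\Fcal_\sigma$ into a Hom condition about $\Dcal_\sigma$.

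By the discussion immediately following Proposition~\ref{prop:morphismcat}, the class $\Fcal_\sigma$ is already closed under extensions, submodules and coproducts in $\AMod$, and such a class is a torsion-free class precisely when it is in addition closed under arbitrary direct products. First I would apply Theorem~\ref{thm:completion} to produce a silting module $T'$ with projective presentation $\sigma'$ such that $\Dcal_\sigma=\Dcal_{\sigma'}=\Gen T'$, and then Theorem~\ref{thm:finite type silting} to obtain a set $\Sigma$ of morphisms in $\projA$ with $\Gen T'=\Dcal_\Sigma$. Combining these equalities gives $\Dcal_\sigma=\Dcal_\Sigma$.

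Next, exactly as in the proof of Proposition~\ref{prop:morphismcat}(3), for any morphism $\tau$ between projective right $A$-modules and any left $A$-module $Y$, the tensor--Hom adjunction gives a natural identification $(\tau\otimes_AY)^+\cong\Hom{A}{\tau}{Y^+}$, where $(-)^+$ denotes the character dual. Since $(-)^+$ detects injectivity and surjectivity of morphisms of abelian groups, this yields the equivalence
\[
Y\in\Fcal_\tau\quad\Longleftrightarrow\quad Y^+\in\Dcal_\tau.
\]
Applied to $\sigma$ and to each $\tau\in\Sigma$, together with the equality $\Dcal_\sigma=\Dcal_\Sigma$ from the previous step, this gives $\Fcal_\sigma=\{Y\in\AMod\mid Y^+\in\Dcal_\sigma\}=\{Y\in\AMod\mid Y^+\in\Dcal_\Sigma\}=\Fcal_\Sigma$.

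Finally, each $\tau\colon P\la Q$ in $\Sigma$ has $P$ and $Q$ finitely generated projective, hence direct summands of finitely generated free right $A$-modules, so the functor $\tau\otimes_A-$ commutes with arbitrary direct products. Consequently, for any family $(Y_i)_{i\in I}$ of modules in $\Fcal_\tau$ the map $\tau\otimes_A\prod_iY_i\cong\prod_i(\tau\otimes_AY_i)$ is injective as a product of injections, so $\Fcal_\tau$ is closed under products, and so is $\Fcal_\Sigma=\bigcap_{\tau\in\Sigma}\Fcal_\tau=\Fcal_\sigma$. The only delicate point is the initial reduction to a $\Sigma\subseteq\projA$; without Theorems~\ref{thm:completion} and~\ref{thm:finite type silting} one cannot replace the potentially large projectives in the presentation $\sigma$ by finitely generated ones, and $\sigma\otimes_A-$ would in general not preserve products, so the final step would fail.
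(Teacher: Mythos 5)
Your proof is correct, and it takes a slightly more direct route than the paper's. The paper reduces to the silting case and obtains a $\Sigma\subseteq\projA$ with $\Dcal_\sigma=\Dcal_\Sigma$ just as you do, but then passes to the morphism category $\MorA$: it regards $C_\sigma\oplus C_{\id_A}$ as a tilting object, observes $\big(C_\sigma\oplus C_{\id_A}\big)^{\perp_1}=\Scal^{\perp_1}$ where $\Scal=\{C_\tau\mid\tau\in\Sigma\}$ via Proposition~\ref{prop:morphismcat}(2), invokes a theorem from G\"obel--Trlifaj to upgrade this equality of $\mathrm{Ext}^1$-orthogonals to an equality of $\mathrm{Tor}_1$-orthogonals, and then translates back via Proposition~\ref{prop:morphismcat}(3) to get $\Fcal_\sigma=\Fcal_\Sigma$. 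You instead deduce $\Fcal_\sigma=\Fcal_\Sigma$ directly from $\Dcal_\sigma=\Dcal_\Sigma$ by using the natural isomorphism $(\tau\otimes_AY)^+\cong\Hom{A}{\tau}{Y^+}$ and the fact that $(-)^+$ reflects exactness, which yields $Y\in\Fcal_\tau\iff Y^+\in\Dcal_\tau$ for any morphism $\tau$ of projective modules and hence makes the implication $\Dcal_\sigma=\Dcal_\Sigma\Rightarrow\Fcal_\sigma=\Fcal_\Sigma$ a one-line verification. The underlying mechanism in both arguments is character duality (the same mechanism used in the paper's proof of Proposition~\ref{prop:morphismcat}(3)); your version avoids the detour through $\MorA$ and the external citation at the price of re-deriving a small piece of it by hand, which I find a cleaner self-contained presentation. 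The closing step --- that $\tau\otimes_A-$ commutes with products when $\tau$ is a morphism of finitely generated projectives --- is also correctly handled.
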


\begin{proof}
We can without loss of generality assume that $T$ is silting by Theorem~\ref{thm:completion}. Let $\Sigma$ be a set of morphisms in $\projA$ given by Theorem~\ref{thm:finite type silting} such that $\Dcal_\sigma=\Dcal_\Sigma$. Then by Proposition~\ref{prop:silting to tilting reduction}, $T=C_\sigma\oplus C_{\id_A}$ is a tilting object in $\MorA$ and if we put $\Scal=\{C_{\tau}\mid\tau\in\Sigma\}$, then
\[ T^{\perp_1} = \Scal^{\perp_1} = \{C_\gamma\in\AMor\mid\Ker\beta\in\Dcal_\sigma\}. \]
It follows from~\cite[Theorem 15.2]{GT} that also $T^{\torp1}=\Scal^{\torp1}$ and one more application of Proposition~\ref{prop:silting to tilting reduction} gives $\Fcal_\sigma=\Fcal_\Sigma$.
Since the maps in $\Sigma$ are between finitely generated (thus also presented) projective modules, it follows that $\Fcal_\Sigma$ is closed under products.
\end{proof}

\subsection{Flat rings epimorphisms and Gabriel filters}
We say that a ring homomorphism $\lambda\colon A\longrightarrow B$ is a
\emph{ring epimorphism} if it is an epimorphism in the category of rings with unit, or equivalently by~\cite[Proposition~1.1]{Si} and~\cite[Proposition 1.2]{Stor}, or by~\cite[Lemma~1.1]{GdP}, if the functor given by the restriction of scalars $\lambda_\ast\colon\ModB\la \ModA$ is fully faithful. Two ring epimorphisms $\lambda_1\colon A\longrightarrow B_1$ and $\lambda_2\colon A\longrightarrow B_2$ are said to be \emph{equivalent} if there is a (necessarily unique) isomorphism of rings $\mu\colon B_1\longrightarrow B_2$ such that $\lambda_2=\mu\circ \lambda_1$. We then say that $\lambda_1$ and $\lambda_2$ are in the same {\em epiclass} of $A$. The subcategories obtained as the essential image of the restriction of scalars functor are of particular interest to us.

\begin{definition} 
A full subcategory $\Xcal$ of an abelian category $\Acal$ is called 
\begin{enumerate} 
\item \emph{wide} if it is closed under kernels, cokernels and extensions in $\Acal$. Equivalently, the inclusion $\Xcal \longrightarrow \Acal$ is an exact functor of abelian categories, and the canonical morphism $\Ext1\Xcal--\longrightarrow\Ext1\Acal--$ is an isomorphism.
\item \emph{bireflective} if the inclusion functor $\Xcal\longrightarrow\Acal$ admits both left and right adjoints.
\end{enumerate}
\end{definition}

\begin{theorem}\label{thm:epicl}
Let $A$ be a ring. A subcategory $\Xcal\subseteq\ModA$ is bireflective if and only if it is closed under limits and colimits (equivalently: under kernels, cokernels, products and copoducts).
The assignment that takes a ring epimorphism $\lambda\colon A\longrightarrow B$ originating in $A$ to the essential image $\Xcal_B$ of $\lambda_\ast\dd\ModB\la \ModA$ defines a bijection between
\begin{itemize}
\item epiclasses of ring epimorphisms $A\la B$, and
\item bireflective subcategories of $\ModA$,
\end{itemize}
which restricts to a bijection between  
\begin{itemize}
\item epiclasses of ring epimorphisms $A\la B$ with $\Tor{1}{A}{B}{B}=0$, and
\item wide bireflective subcategories in $\ModA$.
\end{itemize}
\end{theorem}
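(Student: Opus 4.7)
The plan is to handle the three parts of the theorem in sequence, with the bulk of the work falling on the main bijection. For the closure characterization in part~(1), I would first reduce ``closed under limits and colimits'' to the four-operation form by the standard construction of limits as kernels of differences of products (and dually for colimits). The equivalence with bireflectivity is then classical: if $i\colon\Xcal\to\ModA$ has both a left and a right adjoint, then $i$ preserves all limits and colimits, so $\Xcal$ inherits closure under them; conversely, in the locally presentable category $\ModA$, a subcategory closed under all limits is reflective by the special adjoint functor theorem, and dually for coreflectivity.

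For the main bijection, the forward assignment is essentially tautological: given a ring epimorphism $\lambda\colon A\to B$, the restriction of scalars $\lambda_\ast$ is fully faithful (by the results cited just above the theorem) with left adjoint $-\otimes_A B$ and right adjoint $\Hom{A}{B}{-}$, so its essential image $\Xcal_B$ is bireflective. The reverse direction is the crux. Given bireflective $\Xcal$ with reflection $L$, set $B:=L(A)$ and use the composite abelian group isomorphism
\[
B\;\cong\;\Hom{A}{A}{L(A)}\;\cong\;\Hom{\Xcal}{L(A)}{L(A)}\;=\;\End_A(B)
\]
(evaluation at $1_A$ followed by the $(L,i)$-adjunction) to transport the composition ring structure of $\End_A(B)$ to $B$. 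A direct check shows that the unit $\eta_A\colon A\to B$ is then a ring homomorphism and that the preexisting right $A$-module structure on $B$ agrees with the one induced by $\eta_A$ from its new ring structure. To see $\eta_A$ is a ring epimorphism, I would verify that its induced restriction of scalars has essential image precisely $\Xcal$: every $B$-module is a colimit of copies of $B=L(A)\in\Xcal$ and $\Xcal$ is colimit-closed, while conversely every $X\in\Xcal$ carries a canonical compatible $B$-action, extracted from the adjunction identification $X\cong\Hom{A}{B}{X}$ by setting $x\cdot b:=\phi_x(b)$ where $\phi_x\colon B\to X$ is the unique $A$-linear map with $\phi_x(1_B)=x$. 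Mutual inverseness of the two assignments then follows by unwinding definitions.

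For the restriction to wide subcategories, bireflective $\Xcal_B$ is already closed under kernels and cokernels, so wideness reduces to closure under extensions. Given a short exact sequence $0\to M\to N\to P\to 0$ in $\ModA$ with $M,P\in\Xcal_B$, the Tor long exact sequence for $-\otimes_A B$ combined with the isomorphisms $M\otimes_A B\cong M$ and $P\otimes_A B\cong P$ shows via the five lemma that $N\in\Xcal_B$ precisely when $\Tor{1}{A}{P}{B}\to M\otimes_A B$ vanishes. If $\Tor{1}{A}{B}{B}=0$, then $\Tor{1}{A}{P}{B}=0$ for every $P\in\Xcal_B$ (resolve $P$ freely as a $B$-module and use $B\otimes_A B\cong B$), yielding extension closure. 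For the converse, wideness forces the comparison map $\mathrm{Ext}^1_B(M,N)\to\mathrm{Ext}^1_A(M,N)$ to be an isomorphism for all $M,N\in\ModB$; applying this to a short exact sequence of $B$-modules extracted from a free $A$-presentation of $B$ yields $\Tor{1}{A}{B}{B}=0$.

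The main obstacle is the backward direction of the central bijection: rigorously endowing $B=L(A)$ with a compatible ring structure and verifying that the resulting $\eta_A$ is a ring epimorphism whose restriction of scalars has essential image exactly $\Xcal$. A secondary technical hurdle is the converse in the wide case, where extracting $\Tor{1}{A}{B}{B}=0$ from extension closure requires a careful choice of auxiliary exact sequence.
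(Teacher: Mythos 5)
The paper's own proof is a two-line citation: the first bijection is attributed to~[GdP, Theorem~1.2], the observation that subcategories closed under limits and colimits are bireflective follows from the adjoint triple $(B\otimes_A-,\lambda_\ast,\Hom{A}{B}{-})$, and the restriction to the wide case is attributed to~[Sch, Theorem~4.8]. Your proposal instead attempts a self-contained direct proof, essentially reconstructing the arguments that lie behind those cited theorems. This is a genuinely different route and worth attempting, but it leaves two real gaps.

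The more serious gap is in the backward direction of the central bijection. You construct $B=L(A)$ with the endomorphism ring structure, observe that every $X\in\Xcal$ acquires a $B$-action, and claim that the essential image of the restriction of scalars along $\eta_A$ equals $\Xcal$. That is not yet enough: to conclude that $\eta_A$ is a ring epimorphism, you need $\lambda_\ast$ to be \emph{fully faithful}, i.e.\ $\Hom{B}{X}{Y}=\Hom{A}{X}{Y}$ for all $B$-modules $X,Y$, and you never address this. The standard way to close the gap is to observe that, since $\Xcal$ is closed under kernels and cokernels, the inclusion $i\colon\Xcal\hookrightarrow\ModA$ is exact, hence its left adjoint $L$ preserves projectives; thus $B=L(A)$ is a projective generator of the abelian category $\Xcal$, and Morita theory gives an equivalence $\Xcal\simeq\rmod{\End_A(B)}$ under which $i$ corresponds to restriction of scalars. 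The ring epimorphism property then follows from $L(B)\cong B$, i.e.\ $B\otimes_AB\cong B$. Without some argument of this sort, writing down the assignment $\Xcal\mapsto(A\to L(A))$ does not by itself show it lands in (epiclasses of) ring epimorphisms.

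The second gap is the converse of the wide case. Your forward direction is fine once written out (using $M\otimes_AB\cong M$ for $B$-modules $M$ and the free $B$-presentation of $P$), but the converse --- ``wideness forces $\Ext^1_B(M,N)\to\Ext^1_A(M,N)$ to be an isomorphism; apply this to a sequence extracted from a free $A$-presentation of $B$'' --- is too vague to be checkable. The terms of a free $A$-presentation of $B$ are not $B$-modules, so the Ext-comparison does not apply to them directly, and you give no indication of which auxiliary sequence of $B$-modules is meant or how $\Tor{1}{A}{B}{B}$ actually appears. You flag this as a technical hurdle, but as it stands the reduction is not an argument. A clean way is to argue contrapositively: if $\Tor{1}{A}{B}{B}\ne 0$, produce an $A$-module extension of two $B$-modules that is not a $B$-module, using the connecting map of the Tor long exact sequence applied to a free $A$-presentation of a $B$-module; this is essentially the content of~[Sch, Theorem~4.8], and you would do well to follow it rather than rely on the abstract Ext-comparison.
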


\begin{proof}
In~\cite[Theorem~1.2]{GdP} it was shown that the assignment $\lambda\mapsto\im\lambda_\ast$ induces a bijection between epiclasses of ring epimorphisms $\lambda\dd A\la B$ and subcategories of $\ModA$ closed under limits and colimits. Each such subcategory is automatically bireflective---given a ring epimorphism $\lambda\dd A\la B$, the induction $B\otimes_A-$ and the coinduction $\Hom{A}{B}{-}\dd\ModA\la\ModB$ are the adjoints of the fully faithful functor $\lambda_\ast\dd\ModB\la \ModA$.
Finally, the first bijection restricts itself to the second one by~\cite[Theorem~4.8]{Sch}.
\end{proof}

An important class of ring epimorphisms, which will be under focus in this paper and which was of particular interest also historically (cf.~\cite[Chap.~IV]{Laz} and~\cite{Si,Stor}), consists of those whose target is flat over the domain.

\begin{definition}\label{def:flat-epi} A {ring epimorphism} $\lambda\colon A\longrightarrow B$  is said to be
{\em (right) flat} if $B$ is a flat right $A$-module.
\end{definition}

The omnipresence of these epimorphisms, especially over commutative rings, is attested by the following examples.

\begin{example} The following are examples of flat ring epimorphisms.
\begin{itemize}
\item If $A$ is a commutative ring and $S\subseteq A$ is a multiplicative set, then the localization $A\longrightarrow S^{-1}A$ is a flat epimorphism. More generally, any universal localization of $A$ in the sense of Schofield~\cite[Chap.~4]{Sch} is a flat epimorphism by~\cite[Corollary 4.4]{AMSTV}.

\item If $A$ is not necessarily commutative and $S$ is a multiplicative set consisting of central elements, then $A\longrightarrow S^{-1}A$ is still a flat epimorphism. More generally, any Ore localization of A is flat (see e.~g.\ \cite[\S V.2, Proposition~5]{G-thesis}).
\end{itemize}
\end{example}

Flat ring epimorphisms correspond bijectively to the so-called perfect Gabriel filters \cite[\S V.2]{G-thesis}. Here we briefly recall the relevant terminology and refer to~\cite{St}
for details. 

\begin{definition}
A non-empty subset of the set of left ideals of $A$ is said to be a \emph{filter} if it is closed under intersections and, whenever $I$ lies in $\Gcal$, so does every $J$ containing $I$.

A filter $\mathcal{G}$ of left ideals of $A$ is said to be a \emph{(left) Gabriel filter}, if the following conditions hold:

\begin{enumerate}
\item[(i)] if $I \in \mathcal{G}$, then for any $x \in A$ the ideal $(I : x) = \{a \in A \mid ax \in I\}$ belongs to~$\mathcal{G}$,
\item[(ii)] if $J$ is a left ideal such that there is $I \in \mathcal{G}$ with $(J : x) \in \mathcal{G}$ for all $x \in I$, then $J \in \mathcal{G}$.
\end{enumerate}
\end{definition}

Gabriel filters are often referred to as \emph{Gabriel topologies} (see~\cite[\S V.2]{G-thesis} or~\cite[\S VI.5]{St}), since they form a system of neighborhoods of the zero element for a topology on $A$. By~\cite[Theorem~VI.5.1]{St}, Gabriel filters are in bijecton with hereditary torsion pairs $(\mathcal{T}_\mathcal{G},\mathcal{F}_\mathcal{G})$ in $\AMod$, which assigns to a Gabriel filter $\Gcal$ the classes
\begin{align*}
\mathcal{T}_\mathcal{G}&=\{M\in\AMod\mid \Ann{A}{m}\in\Gcal \text{ for all } I\in\Gcal\}, \\
\mathcal{F}_\mathcal{G}&=\{M\in\AMod\mid \Hom{A}{A/I}{M}=0 \text{ for all } I\in\Gcal\}.
\end{align*}
Here $\Ann{A}{m}=\{a\in A\mid am=0\}$ stands for the annihilator of $m\in M$. Modules in $\Tcal_\Gcal$ are called \emph{$\Gcal$-torsion} and modules in $\Fcal_\Gcal$ are said to be $\mathcal{G}$-\emph{torsion-free}. Recall that a torsion pair $(\Tcal,\Fcal)$ in $\AMod$ is hereditary if and only if $\Fcal$ is closed under taking injective envelopes, \cite[Proposition VI.3.2]{St}.

Gabriel filters are also in one-to-one correspondence with the so-called \emph{Giraud subcategories} of $\AMod$ in the sense of~\cite[\S X.1]{St}, that is, the full subcategories $\Xcal$ for which the inclusion functor $i\colon\mathcal X\hookrightarrow\lmod{A}$ has an exact left adjoint functor $\ell\colon \lmod{A}\longrightarrow \mathcal X$. This is~\cite[Theorem X.2.1]{St} and the Giraud subcategory corresponding to a Gabriel filter $\mathcal G$ is given as
\begin{align*}
\mathcal X_\Gcal
&=\{M\in \AMod\mid I\subseteq M \textrm{ induces }\Hom{A}{A}{M}\cong\Hom{A}{I}{M} \text{ for\ all } I\in\mathcal G\}
\\
&=\{M\in \AMod\mid \Hom{A}{A/I}{M}=0=\Ext{1}{A}{A/I}{M} \text{ for\ all } I\in\mathcal G\}
\\
&=\{M\in \AMod\mid \Hom{A}{T}{M}=0=\Ext{1}{A}{T}{M} \text{ for\ all } T\in\Tcal_\Gcal\}.
\end{align*}
The modules in $\Xcal_\Gcal$ are called \emph{$\mathcal G$-closed} modules, \cite[\S IX.1]{St}. 
The unit $\psi$ associated to the adjoint pair $(\ell,i)$ induces for every module $M\in\AMod$ an exact sequence
\begin{equation}\label{eq:reflect}
0\longrightarrow t(M)\longrightarrow M\stackrel{\psi_M}{\longrightarrow} M_\Gcal\longrightarrow \Coker\psi_M\longrightarrow 0 \end{equation}
where $t(M)$ and $\Coker\psi_M$ are in $\mathcal T_\Gcal$, and $M_\Gcal\in\Xcal_\Gcal$.
In fact, any map $\psi_M$ in an exact sequence with these properties is automatically an $\mathcal X_\Gcal$-reflection of $M$, i.~e.~every morphism $h\colon M\longrightarrow X$ with $X\in\Xcal_\Gcal$ factors uniquely through $\psi_M$.
All of this follows from the explicit construction of $\ell$ in~\cite[\S IX.1]{St} and the proof of~\cite[Proposition IX.1.11]{St}.
The $\mathcal X_\Gcal$-reflection of $A$ always induces a ring homomorphism $\lambda\colon A\longrightarrow A_\Gcal$
using the natural ring structure on $A_\Gcal\cong{\rm End}_A A_\Gcal$. 
Every module in $\mathcal X_\Gcal$ is an $A_\Gcal$-module,
so we have a left exact functor $q\colon \lmod{A}\stackrel{\ell}{\longrightarrow}\mathcal \Xcal_\Gcal\stackrel{j}{\hookrightarrow}\lmod{A_\Gcal}$.

\begin{definition}[{\cite[\S XI.3]{St}}]\label{def:perfect} A Gabriel filter $\mathcal G$ is said to be \emph{perfect} if the inclusion functor $i\colon\mathcal X_\Gcal\hookrightarrow\AMod$ has a right adjoint, or equivalently,  if $\mathcal X_\Gcal$ is closed under colimits.
 \end{definition} 

When $\Gcal$ is perfect, $\mathcal X_\Gcal$ is a bireflective subcategory of $\AMod$, and $\lambda\dd A\la A_\Gcal$ is the corresponding ring epimorphism under the bijection from Theorem~\ref{thm:epicl}. Since $\ell=A_\Gcal\otimes_A-$ is the (exact) left adjoint of the inclusion functor $i\colon\mathcal X\hookrightarrow\rmod{A}$, the ring epimorphism $\lambda$ is flat. The following theorem summarizes the discussion.
 
\begin{theorem}[{\cite[Ch.~XI, Theorem 2.1 and Proposition 3.4]{St}}]\label{thm:perfectGT and ringepi}
For any ring $A$, there are bijections between 
\begin{enumerate}
\item[(i)] perfect left Gabriel filters on $A$;
\item[(ii)] bireflective Giraud subcategories of $\AMod$;
\item[(iii)] epiclasses of right flat ring epimorphisms $A\longrightarrow B$.
\end{enumerate}
The bijection ${\rm (i)}\to{\rm (iii)}$ is defined by the assignment
$\Gcal\mapsto (\lambda\colon A\longrightarrow A_\Gcal)$, and the inverse map is given by
 $  (\lambda\colon A\longrightarrow B)\mapsto\,\mathcal G=\{ I\le A\,\mid\, B\cdot\lambda(I)=B\}$.
\end{theorem}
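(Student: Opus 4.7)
The plan is to assemble the three-way bijection by combining the framework of Gabriel filters developed above with Theorem~\ref{thm:epicl} in its left-module form. The bijection (i)$\leftrightarrow$(ii) is essentially a restriction of the bijection $\Gcal\mapsto\Xcal_\Gcal$ between Gabriel filters and Giraud subcategories of $\AMod$ cited from~\cite[Theorem~X.2.1]{St}: by Definition~\ref{def:perfect}, a Gabriel filter $\Gcal$ is perfect precisely when $\Xcal_\Gcal$ is closed under colimits, which by (the left-module analogue of) Theorem~\ref{thm:epicl} coincides with bireflectivity of $\Xcal_\Gcal$ inside $\AMod$.

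For (ii)$\leftrightarrow$(iii), I would invoke the left-module version of Theorem~\ref{thm:epicl}, which sends an epiclass of $\lambda\colon A\la B$ to $\Xcal_B=\im(\lambda_\ast\colon\BMod\la\AMod)$ and gives a bijection with bireflective subcategories of $\AMod$. The left adjoint to $\lambda_\ast$ is the induction $B\otimes_A-$, and by uniqueness of adjoints it must coincide with the Giraud reflection $\ell\colon\AMod\la\Xcal_B$ whenever $\Xcal_B$ happens to be Giraud. Consequently, a bireflective $\Xcal_B$ is Giraud \iff the functor $B\otimes_A-$ is exact, i.~e.\ \iff $B$ is flat as a right $A$-module. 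This restricts the bijection of Theorem~\ref{thm:epicl} exactly to the one sought.

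For the explicit formulas, the assignment $\Gcal\mapsto(\lambda\colon A\la A_\Gcal)$ is by definition the construction of the Giraud reflection of $A$ recalled before the statement of the theorem. Conversely, given a flat epimorphism $\lambda\colon A\la B$ with associated perfect Gabriel filter $\Gcal$, a left ideal $I\le A$ lies in $\Gcal$ \iff $A/I\in\Tcal_\Gcal$; since $\Tcal_\Gcal$ is exactly the kernel of the reflection $\ell\cong B\otimes_A-$ (as torsion modules are those whose reflection vanishes, readable from the exact sequence~\eqref{eq:reflect}), this amounts to $B\otimes_A(A/I)\cong B/B\cdot\lambda(I)=0$, i.~e.\ $B\cdot\lambda(I)=B$. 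The main obstacle in the whole argument is really the identification of $\Tcal_\Gcal$ with $\Ker\ell$ together with the identification of $\ell$ with $B\otimes_A-$ up to canonical isomorphism; once both are in hand, exactness of $\ell$ matches flatness of $B$ tautologically, and the rest of the theorem is bookkeeping with the bijections already available.
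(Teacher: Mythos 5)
Your proposal is correct and follows essentially the same approach as the paper, which delegates to Stenström and sketches the key identifications in the preceding discussion: perfection of $\Gcal$ corresponds to closure of the Giraud subcategory $\Xcal_\Gcal$ under colimits (hence bireflectivity by the left-module form of Theorem~\ref{thm:epicl}), the reflection $\ell$ is identified with $B\otimes_A-$ via uniqueness of adjoints, and exactness of $\ell$ corresponds to flatness of $B$. Your derivation of the inverse formula via $\Tcal_\Gcal=\Ker\ell$ is also the standard one and matches the paper's assertion.
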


\subsection{Approximations and perpendicular categories}

Given any category $\Acal$ and a subcategory $\Bcal\subseteq\Acal$, we call a morphism $f\dd B\la A$ in $\Acal$ a \emph{$\Bcal$-precover} of the object $A$ if $B\in\Bcal$ and any morphism $g\dd B'\la A$ with $B'\in\Bcal$ factors through~$f$.
Dually, a morphism $f\dd A\la B$ is called a \emph{$\Bcal$-preenvelope} if $B\in\Bcal$ and each $g\dd A\la B'$ with $B'\in\Bcal$ factors through $f$.
The subcategory $\Bcal$ is called \emph{precovering} in $\Acal$ if each $A\in\Acal$ admits a $\Bcal$-precover and it is called \emph{preenveloping} if each $A\in\Acal$ admits a $\Bcal$-preenvelope.

Precovers and preenvelopes are generally collectively referred to as approximations.
There also exist stronger versions of these notions ($\Bcal$-covers, $\Bcal$-envelopes, covering and enveloping classes \cite[Definitions~5.1 and~5.5]{GT}), but these are not essential for the current discussion.

For us, important examples of precovering and preenveloping classes are given by the following proposition.

\begin{proposition}[{\cite[Theorems~6.11(a) and~6.25(a)]{GT}}] \label{prop:precovering and preenveloping}
Given any set of right modules $\Scal\subseteq\ModA$, the class $\Scal^{\perp_1}\subseteq\ModA$ is preenveloping.
Similarly, the class ${^{\torp1}\Scal}\subseteq\ModA$ is precovering for any set $\Scal\subseteq\AMod$ of left modules.
\end{proposition}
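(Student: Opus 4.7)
The plan is to invoke the transfinite small object argument of Eklof--Trlifaj type, which is precisely the technology underlying the cited G\"obel--Trlifaj results. I would first handle the preenveloping statement directly and then reduce the precovering statement to it via character duality.

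For the first assertion, given $M\in\ModA$, I would build a continuous chain $M=M_0\hookrightarrow M_1\hookrightarrow\cdots\hookrightarrow M_\alpha\hookrightarrow\cdots$ indexed by ordinals $\alpha\le\kappa$, where $\kappa$ is a regular cardinal whose cofinality strictly exceeds $|A|+\sup_{S\in\Scal}|S|+\aleph_0$. At a successor stage, given $M_\alpha$, I would enumerate all pairs $(S,\xi)$ with $S\in\Scal$ and $\xi\in\Ext{1}{A}{S}{M_\alpha}$, represented by a short exact sequence $0\to M_\alpha\to E_\xi\to S\to 0$, and form $M_{\alpha+1}$ as the pushout of the canonical inclusion $\bigoplus_{(S,\xi)}M_\alpha\hookrightarrow\bigoplus_{(S,\xi)}E_\xi$ along the codiagonal $\bigoplus_{(S,\xi)}M_\alpha\to M_\alpha$. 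This yields a short exact sequence
\[ 0\longrightarrow M_\alpha\longrightarrow M_{\alpha+1}\longrightarrow\bigoplus_{(S,\xi)}S\longrightarrow 0 \]
with the crucial property that the induced map $\Ext{1}{A}{S}{M_\alpha}\to\Ext{1}{A}{S}{M_{\alpha+1}}$ sends each $\xi$ to zero; at limit ordinals I take direct limits. I would then verify (a) $M_\kappa\in\Scal^{\perp_1}$ (any extension $0\to M_\kappa\to E\to S\to 0$ pulls back, by the cofinality bound, from an extension over some $M_\alpha$ with $\alpha<\kappa$, which is killed at the next successor step), and (b) $M\to M_\kappa$ is an $\Scal^{\perp_1}$-preenvelope: given $g\dd M\to N$ with $N\in\Scal^{\perp_1}$, the lift is built step by step, using $\Ext{1}{A}{\bigoplus_{(S,\xi)}S}{N}=0$ at successor stages and the universal property of the direct limit at limit stages.

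For the second assertion, I would reduce to the first via character duality. For each $S\in\Scal\subseteq\AMod$, set $S^+=\Hom{\Z}{S}{\QQ/\Z}\in\ModA$; the standard natural isomorphism $(\Tor{1}{A}{X}{S})^+\cong\Ext{1}{A}{X}{S^+}$ identifies ${^{\torp1}\Scal}$ with ${^{\perp_1}\{S^+\mid S\in\Scal\}}\subseteq\ModA$. Precovering of this left perpendicular class is the dual (left-class) half of the same Eklof--Trlifaj completeness theorem for the cotorsion pair generated by the set $\{S^+\mid S\in\Scal\}$, proved by the analogous transfinite construction---iterated pullbacks along $S^+$-extensions---which produces special precovers whose kernels are filtered by copies of the $S^+$.

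The main obstacle is the cardinality bookkeeping that forces the chain to stabilize: one must verify that every element of $\Ext{1}{A}{S}{M_\kappa}$ descends to some $\Ext{1}{A}{S}{M_\alpha}$ with $\alpha<\kappa$. This follows from the fact that each $S\in\Scal$ admits a presentation by free modules of cardinality at most $|S|+|A|+\aleph_0$, so $\Ext{1}{A}{S}{-}$ commutes with sufficiently directed colimits under our cofinality assumption. All other verifications---exactness of the pushout, preservation of short exact sequences in $\ModA$ under direct limits, and the limit-stage lifting---are routine.
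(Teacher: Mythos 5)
Your treatment of the preenveloping half is sound and is essentially the Eklof--Trlifaj small object argument behind \cite[Theorem~6.11(a)]{GT}: build a continuous chain by taking pushouts that kill $\Ext{1}{A}{S}{M_\alpha}$, check that the kernel-of-presentation of each $S$ has cardinality small relative to the regular cardinal $\kappa$, so that every class in $\Ext{1}{A}{S}{M_\kappa}$ descends to some $\Ext{1}{A}{S}{M_\alpha}$ with $\alpha<\kappa$ and is therefore killed at stage $\alpha+1$; the factorization property uses Eklof's lemma. This is fine.

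The precovering half has a genuine gap. The identification ${^{\torp1}\Scal}={^{\perp_1}\{S^+\mid S\in\Scal\}}$ via character duality is correct, but your claim that precovering of ${^{\perp_1}\{S^+\}}$ is ``the dual half of the same Eklof--Trlifaj completeness theorem for the cotorsion pair generated by $\{S^+\}$,'' obtained by ``iterated pullbacks'' producing ``special precovers whose kernels are filtered by copies of the $S^+$,'' does not work. Two separate problems: (i) the Eklof--Trlifaj completeness theorem (and Salce's lemma, which is how the precovering half of that theorem is actually obtained from the preenveloping half) gives special precovers for the left class ${^{\perp_1}\bigl(\{S^+\}^{\perp_1}\bigr)}$ of the cotorsion pair generated by the set $\{S^+\}$, which is in general a \emph{proper} subclass of ${^{\perp_1}\{S^+\}}$; the two agree only when ${^{\perp_1}\{S^+\}}$ happens to be the left part of a cotorsion pair, which is not automatic and requires a separate argument. (ii) Even setting (i) aside, there is no ``dual Eklof lemma'' making the proposed construction close: if the kernel of your candidate precover were transfinitely filtered by copies of the $S^+$, that does \emph{not} imply $\Ext{1}{A}{D}{K}=0$ for $D\in{^{\perp_1}\{S^+\}}$, because $\Ext{1}{A}{D}{-}$ does not commute with transfinite extensions in the second variable the way it does in the first. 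What actually makes the precovering statement in \cite[Theorem~6.25(a)]{GT} go through is the structural fact that ${^{\torp1}\Scal}$ is closed under direct limits (since $\Tor{1}{A}{-}{-}$ commutes with them) and under pure submodules, so it is a Kaplansky (deconstructible) class; one then applies the Enochs/Bican--El~Bashir--Enochs machinery to conclude that it is (pre)covering. Equivalently, one can exploit the pure-injectivity of the $S^+$ to show ${^{\perp_1}\{S^+\}}$ is the left part of a perfect cotorsion pair. Your translation to $\Ext$ via duality discards precisely the closure under direct limits that drives the argument, and the ``iterated pullbacks'' step that is supposed to replace it does not exist.
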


This also allows one to construct approximations (in fact, even reflections and coreflections) for other classes of modules.

\begin{lemma} \label{lem:perpendicular}
\begin{enumerate}
\item Let $\Scal\subseteq\ModA$ be a set of right modules of projective dimension at most one. Then $\Scal^{\perp_{0,1}}$ is a wide subcategory closed under products in $\ModA$.
\item Let $\Scal\subseteq\AMod$ be a set of left modules of flat dimension at most one. Then ${^{\torp{0,1}}\Scal}$ is a wide subcategory closed under direct sums in $\ModA$.
\end{enumerate}
\end{lemma}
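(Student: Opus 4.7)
The plan is to verify the three closure conditions for wideness (kernels, cokernels, extensions) together with closure under products in (1) and under direct sums in (2) via standard long exact sequence arguments; the hypothesis on dimension is used solely to kill the degree-$2$ term.

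For part (1), closure under products is immediate from the formula $\Ext{i}{A}{S}{\prod_\alpha Y_\alpha}\cong\prod_\alpha\Ext{i}{A}{S}{Y_\alpha}$ applied to each $S\in\Scal$ and $i=0,1$. Closure under extensions follows from the usual three-term piece of the long exact sequence: given $0\to X\to Y\to Z\to 0$ with $X,Z\in\Scal^{\perp_{0,1}}$, the $\Ext$ long exact sequence sandwiches $\Hom{A}{S}{Y}$ and $\Ext{1}{A}{S}{Y}$ between two zeros. The key case is therefore that of kernels and cokernels. Given a morphism $f\colon X\to Y$ in $\Scal^{\perp_{0,1}}$, factor it as $X\twoheadrightarrow I\hookrightarrow Y$ with $I=\im f$, so that we have short exact sequences
\[ 0\la K\la X\la I\la 0\qquad\text{and}\qquad 0\la I\la Y\la C\la 0, \]
where $K=\Ker f$ and $C=\Coker f$. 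Applying $\Hom{A}{S}{-}$ for $S\in\Scal$ to the second sequence and using $\Hom{A}{S}{Y}=\Ext{1}{A}{S}{Y}=0$ yields $\Hom{A}{S}{I}=0$ and the connecting isomorphism $\Hom{A}{S}{C}\cong\Ext{1}{A}{S}{I}$, while $\Ext{1}{A}{S}{C}$ injects into $\Ext{2}{A}{S}{I}$. Applying $\Hom{A}{S}{-}$ to the first sequence and using $\Hom{A}{S}{X}=\Ext{1}{A}{S}{X}=0$ together with $\Hom{A}{S}{I}=0$ yields $\Ext{1}{A}{S}{K}=0$ and an injection of $\Ext{1}{A}{S}{I}$ into $\Ext{2}{A}{S}{K}$. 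Both $\Ext{2}{A}{S}{-}$ groups vanish because $\projdim S\le 1$; this forces $\Ext{1}{A}{S}{I}=0$ and $\Ext{1}{A}{S}{C}=0$, hence $\Hom{A}{S}{C}=0$. Altogether $K,C\in\Scal^{\perp_{0,1}}$, completing the argument.

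Part (2) is completely analogous via the symmetric arguments with $\Tor{i}{A}{-}{S}$ in place of $\Ext{i}{A}{S}{-}$. Closure under direct sums uses that $\Tor{i}{A}{-}{S}$ commutes with direct sums in the first variable, and closure under extensions is the standard three-term sandwich. For kernels and cokernels of a morphism in ${^{\torp{0,1}}\Scal}$, one factors through the image and runs the two resulting Tor long exact sequences exactly as above, using the hypothesis $\flatdim S\le 1$ to force $\Tor{2}{A}{-}{S}=0$ and thereby to cancel the obstruction terms.

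The only potentially delicate point is ensuring that the degree-$2$ terms really do vanish on the appropriate arguments; but projective (respectively flat) dimension of $S$ bounds $\Ext{2}{A}{S}{-}$ (respectively $\Tor{2}{A}{-}{S}$) on \emph{any} module, so this is automatic. The rest is pure diagram chasing in long exact sequences, so I expect no substantive obstacle.
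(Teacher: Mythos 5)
Your proof is correct, and it is essentially the standard argument behind the reference the paper cites: the paper disposes of wideness by invoking Geigle--Lenzing \cite[Proposition 1.1]{GL}, and your long-exact-sequence computation through $\im f$ is exactly the content of that result. The product and direct-sum closures are handled identically.
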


\begin{proof}
The class $\Scal^{\perp_{0,1}}$ in~(1) is clearly closed under products and the fact that it is wide follows from \cite[Proposition 1.1]{GL}. 
The proof of~(2) is analogous.
\end{proof}

\begin{proposition}\label{prop:perp (co)relflection}
\begin{enumerate}
\item Let $\Scal\subseteq\ModA$ be a set of right modules of projective dimension at most one. Then the inclusion $\Scal^{\perp_{0,1}}\hookrightarrow\ModA$ admits a left adjoint. That is, for each $M\in\ModA$ there exists a homomorphism $\psi_M\dd M\la N$ with $N\in\Scal^{\perp_{0,1}}$ and such that each $\theta\dd M\la N'$ with $N'\in\Scal^{\perp_{0,1}}$ uniquely factors through~$\psi_M$.

\item Let $\Scal\subseteq\AMod$ be a set of left modules of flat dimension at most one. Then the inclusion ${^{\torp{0,1}}\Scal}\hookrightarrow\ModA$ admits a right adjoint. That is, for each $M\in\ModA$ there exists a homomorphism $\varphi_M\dd N\la M$ with $N\in{^{\torp{0,1}}\Scal}$ and such that each $\zeta\dd N'\la M$ with $N'\in{^{\torp{0,1}}\Scal}$ uniquely factors through~$\varphi_M$.
\end{enumerate}
\end{proposition}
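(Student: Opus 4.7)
The plan is to construct both adjoints explicitly, starting from the (pre)enveloping and (pre)covering results of Proposition~\ref{prop:precovering and preenveloping} and then trimming the resulting approximations so that they land in the smaller classes $\Scal^{\perp_{0,1}}$ and ${^{\torp{0,1}}\Scal}$. This mirrors the classical Geigle--Lenzing construction of perpendicular categories with respect to modules of projective dimension at most one.

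For part~(1), given $M\in\ModA$, I would take a $\Scal^{\perp_1}$-preenvelope $\eta\colon M\to N$ by Proposition~\ref{prop:precovering and preenveloping} and set
\[
K=\sum_{S\in\Scal,\;f\in\Hom{A}{S}{N}}\im f,\qquad N'=N/K.
\]
By construction $\Hom{A}{S}{N'}=0$ for every $S\in\Scal$, and the projective dimension hypothesis yields $\Ext{2}{A}{S}{K}=0$ for any module $K$, so the long exact sequence associated to $0\to K\to N\to N'\to 0$ gives $\Ext{1}{A}{S}{N'}=0$; hence $N'\in\Scal^{\perp_{0,1}}$. The composite $\psi_M\colon M\to N\to N'$ is the candidate reflection. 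Any $\theta\colon M\to X$ with $X\in\Scal^{\perp_{0,1}}$ lifts along $\eta$ by the preenvelope property, and the lift kills $K$ because $\Hom{A}{S}{X}=0$; descending to $N'$ gives a factorization $\bar\theta\colon N'\to X$ with $\bar\theta\psi_M=\theta$.

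The step I expect to be the main obstacle is uniqueness of $\bar\theta$, since a mere preenvelope does not give unique factorizations. To handle this I would refine $\eta$ to a \emph{special} preenvelope $0\to M\to N\to L\to 0$ with $L$ a transfinite extension of modules in $\Scal$, available from the complete cotorsion pair generated by the set $\Scal$ via Eklof--Trlifaj. The difference of two factorizations then pulls back to a map $N\to X$ vanishing on both $\eta(M)$ and $K$, hence factors through a quotient of $L$. Since $\Hom{A}{-}{X}$ vanishes on $\Scal$ and this vanishing passes to extensions (from the left exactness of $\Hom{A}{-}{X}$) and to quotients, the induced map is zero, forcing $\bar\theta_1=\bar\theta_2$.

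Part~(2) runs dually. I would take a (special) ${^{\torp1}\Scal}$-precover $\varphi\colon N\to M$ from Proposition~\ref{prop:precovering and preenveloping}, obtained from the cotorsion pair associated with the character modules $\Scal^+=\{\Hom{\Z}{S}{\QQ/\Z}\mid S\in\Scal\}$ via the natural isomorphism $\bigl(\Tor{i}{A}{-}{S}\bigr)^+\cong\Ext{i}{A}{-}{S^+}$, which converts the flat dimension bound on $\Scal$ into an injective dimension bound on $\Scal^+$. I would then replace $N$ by its largest submodule $N'\subseteq N$ with $N'\otimes_AS=0$ for every $S\in\Scal$; equivalently, $N'=\bigcap_{S\in\Scal,\,f\colon N\to S^+}\ker f$. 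The flat dimension hypothesis gives $\Tor{2}{A}{N/N'}{S}=0$, so the short exact sequence $0\to N'\to N\to N/N'\to 0$ places $N'$ in ${^{\torp{0,1}}\Scal}$. The composite $N'\hookrightarrow N\xrightarrow{\varphi}M$ is the desired coreflection, with universality proved by the dual of the argument in~(1).
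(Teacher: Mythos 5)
The key step in your argument for~(1) is the claim that $\Hom{A}{S}{N'}=0$ for all $S\in\Scal$, where $N'=N/K$ and $K=\sum_{S,f}\im f$ is the \emph{trace} of $\Scal$ in $N$. This is false. The long exact sequence arising from $0\to K\to N\to N'\to 0$ only gives an embedding $\Hom{A}{S}{N'}\hookrightarrow\Ext{1}{A}{S}{K}$, and the latter need not vanish: you know $\Ext{1}{A}{S}{N}=0$, not $\Ext{1}{A}{S}{K}=0$. For a concrete counterexample take $A=\Z$, $\Scal=\{\Z/2\Z\}$, and $N=\QQ\oplus\QQ/\Z\in\Scal^{\perp_1}$; the trace $K$ is the order-two subgroup of $\QQ/\Z$, $N/K\cong\QQ\oplus\QQ/\Z$, and the latter still admits a nonzero map from $\Z/2\Z$. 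What is needed instead is the \emph{torsion part} $t_\Scal(N)$ for the torsion pair whose torsion-free class is $\Scal^{\perp_0}$; in the example this is the full $2$-power torsion subgroup of $\QQ/\Z$, strictly larger than the trace, and by definition the quotient $N/t_\Scal(N)$ lies in $\Scal^{\perp_0}$. Your construction in~(2) escapes this objection only because the modules $S^+$ are injective, so the reject $\bigcap\ker f$ you form there does coincide with the relevant torsion part.

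After this fix, your strategy---passing to a special preenvelope with $\Scal$-filtered cokernel $L$ so that $\Hom{A}{L}{X}=0$ for $X\in\Scal^{\perp_0}$---does yield a reflection, but it is a genuinely different route from the paper's. The paper avoids special approximations altogether: it first observes that an ordinary $\Scal^{\perp_1}$-preenvelope followed by the torsion quotient is an $\Scal^{\perp_{0,1}}$-preenvelope, then takes an $\Scal^{\perp_{0,1}}$-preenvelope $f\colon M\to L^0$, an $\Scal^{\perp_{0,1}}$-preenvelope of $\Coker f$, and extracts the reflection as the kernel of the resulting composite $L^0\to L^1$; uniqueness is read off a right-exact $\Hom$-sequence together with the closure of $\Scal^{\perp_{0,1}}$ under kernels (Lemma~\ref{lem:perpendicular}). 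That argument dualizes verbatim to~(2), whereas your approach would require a separate check that a suitable filtration argument is available on the Tor side.
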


\begin{proof}
We only prove (1) here; the argument for (2) is analogous.
We will denote $\Lcal=\Scal^{\perp_{0,1}}$ here for convenience.

Observe first that $\Lcal$ is a preenveloping class of modules. Indeed, any module $M$ admits an $\Scal^{\perp_1}$-preenvelope $M\la L$ by Proposition~\ref{prop:precovering and preenveloping}. As $\Scal^{\perp_0}\subseteq\ModA$ is a torsion-free class, we can factor out the corresponding torsion part $t_\Scal(L)$ from $L$. We clearly have $L/t_\Scal(L)\in\Scal^{\perp_0}$ and since $\Scal$ contains only modules of projective dimension at most one, and so $\Scal^{\perp_1}$ is closed under taking quotients, also $L/t_\Scal(L)\in\Scal^{\perp_1}$. It immediately follows that the composition $M\to L\twoheadrightarrow L/t_\Scal(L)$ is an $\Lcal$-preenvelope.

Since $\Lcal$ is closed under kernels and direct summands by Lemma~\ref{lem:perpendicular}(1), it is reflective by (a dual version of) \cite[Theorem~3.1]{CICS}. We will give a short argument for the convenience of the reader, as it simplifies to some extent when compared to the more general result of~\cite{CICS}.
So, let $M\in\ModA$ and consider $\Lcal$-preenvelopes $f\dd M\la L^0$ and $\Coker f\la L^1$. The sequence $M \overset{f}\la L^0 \overset{g}\la L^1$
induces by construction a right exact sequence of abelian groups
\begin{equation}\label{eq:construction of adjoint}
\Hom{R}{L^1}{L}\overset{g^*}\la\Hom{R}{L^0}{L}\overset{f^*}\la\Hom{R}{M}{L} \la 0
\end{equation}
for each $L\in\Lcal$. Let $K=\Ker g$ and denote by $k\dd K\hookrightarrow L^0$ the inclusion; then $f$ uniquely factors through $k$ by construction, and we claim that this factorization $\psi_M\dd M\la K$ is the required $\Lcal$-reflection.

First of all, $K\in\Lcal$ by Lemma~\ref{lem:perpendicular}(1). It remains to prove that $\Hom{A}{\psi_M}{L}$ is bijective for each $L\in\Lcal$. It is surjective since $\Hom{A}{\psi_M}{L}$ factors through $\Hom{A}{f}{L}$ and $f\dd M\la L$ is an $\Lcal$-preenvelope. To prove injectivity, notice first that $\psi_M\dd M\la K$ factors as $\psi_M=hf$ for some morphism $h\dd L\la K$, again because $f\dd M\la L$ is an $\Lcal$-preenvelope. Since then $f=k\psi_M=khf$, or in other words $(\id_{L^0}-kh)f=0$, there is a morphism $s\dd L^1\la L^0$ such $\id_{L^0}-kh=sg$ by the exactness of~\eqref{eq:construction of adjoint}. It follows that $k-khk=sgk=0$ (since $k$ is the kernel of $g$) and consequently $hk=\id_K$ (since $k$ is a monomorphism).
If now $t\dd K\la L$ is a morphism with $L\in\Lcal$ such that $t\psi_M=0$, then also $thf=0$, so $th=ug$ for some $u\dd L^1\la L$, using the exactness of~\eqref{eq:construction of adjoint} again. However, then $t=thk=ugk=0$, as required.
\end{proof}

\begin{remark}
The existence of the left adjoint in Proposition~\ref{prop:perp (co)relflection}(1) also follows by a more abstract argument as in~\cite[Examples 1.3(4)]{PosPerp}, using~\cite[Theorem and Corollary 2.48]{AR} and the fact that $\Scal^{\perp_{0,1}}$ is also closed under $\lambda$-filtered colimits for a large enough regular cardinal $\lambda$. However, that method does not dualize, so we cannot use it to prove Proposition~\ref{prop:perp (co)relflection}(2).
\end{remark}

\section{Silting ring epimorphisms} \label{sec:silting and epi}

As observed in~\cite{AMV2} (building on~\cite{GL,AS1,AA}),
there is a close connection between (partial) silting modules on the one hand and epimorphisms in the category of rings on the other. Given a partial silting $A$-module, there is always an associated ring epimorphism $A\longrightarrow B$, which can be viewed as a certain generalized localization of $A$. Conversely, for certain ring epimorphisms we can go back. The aim of this and the following sections is to understand in detail conditions under which such a reconstruction is possible.

\subsection{Inverting maps between projective modules}
We will be particularly interested in universal ring epimorphisms that make a chosen maps between projective modules invertible. We will start with the following lemma.

\begin{lemma}\label{lem:perp to proj}
Let $\Sigma$ be a set of morphisms in $\ProjA$ and denote  
\begin{align*}
\Xcal_\Sigma &= \{X\in \ModA \mid \Hom{A}{\sigma}{X} \text{ is bijective for each } \sigma\in\Sigma \} \\
&= \{X\in \ModA \mid \Hom{\DerA}{C_\sigma}{X[1]}=0=\Hom{\DerA}{C_\sigma}{X} \textit{ for each } \sigma\in\Sigma \}.
\end{align*}
Then $\Xcal_\Sigma$ is a wide subcategory of $\ModA$ that is also closed under products. The inclusion functor $\Xcal\longrightarrow\ModA$ admits a left adjoint.
\end{lemma}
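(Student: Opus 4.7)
First I verify that the two descriptions of $\Xcal_\Sigma$ coincide. Since $C_\sigma=(P_1\xrightarrow{\sigma}P_0)$ is a bounded complex of projectives, the groups $\Hom{\DerA}{C_\sigma}{X}$ and $\Hom{\DerA}{C_\sigma}{X[1]}$ are computed via chain maps modulo chain homotopy, and a direct inspection identifies them with the kernel and the cokernel of $\Hom{A}{\sigma}{X}\dd\Hom{A}{P_0}{X}\to\Hom{A}{P_1}{X}$; both vanish simultaneously if and only if $\Hom{A}{\sigma}{X}$ is bijective. Closure of $\Xcal_\Sigma$ under products in $\ModA$ is immediate, since $\Hom{A}{P_i}{-}$ commutes with products for $i=0,1$.

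For wideness, it suffices to show each individual $\Xcal_\sigma$ is closed under extensions, kernels and cokernels, as $\Xcal_\Sigma$ is the intersection over $\sigma\in\Sigma$ of these classes. The key tool is that $\Hom{A}{P_0}{-}$ and $\Hom{A}{P_1}{-}$ are exact. Any short exact sequence in $\ModA$ therefore produces a commutative ladder with two short exact rows linked vertically by the natural transformation $\Hom{A}{\sigma}{-}$, and the five-lemma applied to this ladder delivers closure under extensions. For a morphism $f\dd X\to Y$ with $X,Y\in\Xcal_\sigma$, I first show $\Ker f\in\Xcal_\sigma$: the left-exact sequence $0\to\Ker f\to X\xrightarrow{f}Y$ yields a commutative diagram with left-exact rows, and a short diagram chase, using that $\Hom{A}{\sigma}{X}$ is bijective while $\Hom{A}{\sigma}{Y}$ is injective, gives bijectivity of $\Hom{A}{\sigma}{\Ker f}$. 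Applying the five-lemma successively to $0\to\Ker f\to X\to\im f\to 0$ and $0\to\im f\to Y\to\Coker f\to 0$ then places both $\im f$ and $\Coker f$ in $\Xcal_\sigma$.

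For the existence of a left adjoint to the inclusion $\Xcal_\Sigma\hookrightarrow\ModA$, I follow the abstract route outlined in the Remark after Proposition~\ref{prop:perp (co)relflection}. The category $\ModA$ is locally presentable. Since $\Sigma$ is a set, one may choose a regular cardinal $\lambda$ strictly larger than the cardinalities of all $P_0$ and $P_1$ appearing in $\Sigma$; each such $P_i$ is then $\lambda$-presentable, so $\Hom{A}{P_i}{-}$ preserves $\lambda$-filtered colimits. Consequently $\Xcal_\Sigma$ is closed under $\lambda$-filtered colimits in $\ModA$, and being wide and closed under products it is also closed under all small limits. This exhibits $\Xcal_\Sigma$ as a limit-closed, $\lambda$-accessibly embedded subcategory of a locally presentable category, and hence reflective by~\cite[Theorem~and~Corollary~2.48]{AR}.

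The main conceptual obstacle lies in the last step, since it invokes a general reflection theorem rather than producing an explicit reflection. An alternative, more hands-on construction would mimic the classical Gabriel localization: iteratively take pushouts of the current approximation along the maps in $\Sigma$ (to secure surjectivity of $\Hom{A}{\sigma}{-}$), then quotient by the submodule generated by the images of the maps $g\dd P_0\to M$ with $g\sigma=0$ (to secure injectivity), and continue transfinitely until the process stabilizes for cardinality reasons. The abstract approach is however substantially shorter and directly parallels the analogous remark already employed in the paper.
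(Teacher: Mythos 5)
Your proof is correct and takes a genuinely different route from the paper. The paper transfers the whole problem to the morphism category $\MorA\simeq\rmod{T_2(A)}$: it observes that $\Xcal_\Sigma$ corresponds, under $X\mapsto C_{0\to X}$, to the class $\widetilde{\Ccal}^{\perp_{0,1}}$ for the set $\widetilde{\Ccal}=\{C_\sigma\mid\sigma\in\Sigma\}\cup\{C_{\id_A}\}$ of objects of projective dimension at most one in $\MorA$, and then quotes Lemma~\ref{lem:perpendicular}(1) for wideness and Proposition~\ref{prop:perp (co)relflection}(1) (an explicit construction via $\perp_1$-preenvelopes and a kernel) for the reflection. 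You instead prove wideness directly in $\ModA$ by snake-lemma chases relying on exactness of $\Hom{A}{P_i}{-}$, and obtain the left adjoint via the abstract accessibility argument (closure under limits and $\lambda$-filtered colimits in a locally presentable category, then \cite[Theorem and Corollary 2.48]{AR}) that the paper explicitly mentions in the remark after Proposition~\ref{prop:perp (co)relflection} as an alternative. What the paper's route buys is uniformity: exactly the same $\MorA$-transfer and citation pattern handles the dual Lemma~\ref{lem:tor to proj} on $\Ycal_\Sigma$, whereas the accessibility argument you use does not dualize to the $\torp{0,1}$ side (as the paper itself notes), so for Lemma~\ref{lem:tor to proj} a separate argument would still be required. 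The paper's approach also gives a concrete description of the reflection, which you yourself flag as the conceptual trade-off and sketch a hands-on alternative for. Your wideness argument, on the other hand, is more elementary and self-contained and avoids invoking \cite[Proposition 1.1]{GL} through Lemma~\ref{lem:perpendicular}. No gaps.
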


\begin{proof}
Consider the subcategory $\widetilde{\Ccal} = \{C_\sigma\mid\sigma\in\Sigma\} \cup \{C_{\id_A}\}$ in $\MorA$. Notice that for each $\gamma\dd M_1\la M_0$, there is an isomorphism $\Hom{\MorA}{C_{\id_A}}{C_\gamma}\cong M_1$, so $M_1=0$ for any $C_\gamma\in\widetilde{\Xcal}_\Sigma = \widetilde{\Ccal}^{\perp_{0,1}}$. Together with Proposition~\ref{prop:morphismcat}(2), this implies that the class $\widetilde{\Xcal}_\Sigma$ in $\MorA$ contains precisely the objects of the form $C_{0\to X}$ with $X$ in $\Xcal_\Sigma$ (see also \cite[Remark 6.10]{MS}). Since the projective dimension of all objects in $\widetilde{\Ccal}$ is at most one by the proof of \cite[Lemma 5.1]{MS}, the class $\widetilde{\Xcal}_\Sigma$ is wide and closed under products in $\MorA$ by Lemma~\ref{lem:perpendicular}(1) (using the fact that $\MorA\simeq\rmod{T_2(A)}$). It follows that $\Xcal_\Sigma$ has the same closure properties in $\ModA$.

Finally, the inclusion functor $\widetilde{\Xcal}_\Sigma\la\MorA$ has a left adjoint by Proposition~\ref{prop:perp (co)relflection}(1). Given any $M\in\ModA$ and an $\widetilde{\Xcal}_\Sigma$-reflection $C_{(0\to M)} \longrightarrow C_{(0\to X_M)}$ in $\MorA$, it is straightforward to check that the underlying morphism $M\longrightarrow X_M$ is an $\Xcal$-reflection in $\ModA$.
\end{proof}

A dual analogous lemma for tensor product alsho holds:

\begin{lemma}\label{lem:tor to proj}
Let $\Sigma$ be a set of morphisms in $\ProjA$ and denote  
\begin{align*}
\Ycal_\Sigma &= \{Y\in \AMod \mid \sigma\otimes_AY \text{ is bijective for each } \sigma\in\Sigma \} \\
&= \{Y\in \AMod \mid \sigma\otimes^\mathbb{L}_AY = 0 \textit{ for each } \sigma\in\Sigma \}.
\end{align*}
Then $\Ycal_\Sigma$ is a wide subcategory of $\AMod$ that is also closed under coproducts. The inclusion functor $\Ycal\longrightarrow\AMod$ admits a right adjoint.
\end{lemma}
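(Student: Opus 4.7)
The plan is to mirror the proof of Lemma~\ref{lem:perp to proj}, working inside $\AMor\simeq\lmod{T_2(A)}$ with $\torp{}$-orthogonals in place of $\MorA$ with $\perp$-orthogonals. I would reuse the auxiliary set $\widetilde{\Ccal}=\{C_\sigma\mid\sigma\in\Sigma\}\cup\{C_{\id_A}\}\subseteq\MorA\simeq\rmod{T_2(A)}$; by the proof of \cite[Lemma~5.1]{MS} every object of $\widetilde{\Ccal}$ has projective, and hence flat, dimension at most one as a right $T_2(A)$-module.

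The core step is to identify $\widetilde{\Ycal}_\Sigma:=\widetilde{\Ccal}^{\torp{0,1}}\subseteq\AMor$ as the image of $\Ycal_\Sigma$ under the embedding $Y\mapsto C_{(Y\to 0)}$. For $\beta\dd N^0\la N^1$ in $\AMor$, formula~\eqref{eq:tensor maps} yields $C_{\id_A}\otimes_{T_2(A)}C_\beta\cong N^1$, while Proposition~\ref{prop:morphismcat}(3) gives $C_{\id_A}^{\torp 1}=\AMor$ (since $\Fcal_{\id_A}=\AMod$); together these force $N^1=0$ on any $C_\beta\in\widetilde{\Ycal}_\Sigma$, so that $C_\beta=C_{(Y\to 0)}$ with $Y=N^0$. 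For such $C_\beta$ we have $\Ker\beta=Y$, so Proposition~\ref{prop:morphismcat}(3) translates $C_\beta\in C_\sigma^{\torp 1}$ into injectivity of $\sigma\otimes_A Y$ for every $\sigma\in\Sigma$, and formula~\eqref{eq:tensor maps} computes $C_\sigma\otimes_{T_2(A)}C_{(Y\to 0)}\cong\Coker(\sigma\otimes_A Y)$, whose vanishing is surjectivity of $\sigma\otimes_A Y$. Together this pins down $\widetilde{\Ycal}_\Sigma=\{C_{(Y\to 0)}\mid Y\in\Ycal_\Sigma\}$.

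With this identification in hand, the left--right opposite of Lemma~\ref{lem:perpendicular}(2), applied to $\widetilde{\Ccal}$ viewed as a set of right $T_2(A)$-modules of flat dimension at most one, shows that $\widetilde{\Ycal}_\Sigma$ is a wide subcategory of $\AMor$ closed under coproducts, and these closure properties transport to $\Ycal_\Sigma\subseteq\AMod$. The opposite-side analog of Proposition~\ref{prop:perp (co)relflection}(2) then produces a right adjoint to $\widetilde{\Ycal}_\Sigma\hookrightarrow\AMor$; embedding $M\in\AMod$ as $C_{(M\to 0)}$, taking the resulting $\widetilde{\Ycal}_\Sigma$-coreflection $C_{(Y_M\to 0)}\la C_{(M\to 0)}$, and extracting its first component yields the required $\Ycal_\Sigma$-coreflection $Y_M\la M$. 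The main obstacle I anticipate is purely bookkeeping: Lemma~\ref{lem:perpendicular}(2) and Proposition~\ref{prop:perp (co)relflection}(2) are stated with the test class in $\AMod$ and the orthogonal in $\ModA$, whereas both applications here run in the opposite direction; the arguments are entirely symmetric in left and right modules (ultimately because $\Tor$ is balanced), so only a routine side-swap in those proofs is needed.
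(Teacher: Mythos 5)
Your proposal is correct and matches the paper's own proof, which uses exactly the same auxiliary class $\widetilde{\Ccal}$, the same identification $\widetilde{\Ccal}^{\torp{0,1}}=\{C_{N\to 0}\mid N\in\Ycal_\Sigma\}$ via the tensor formula~\eqref{eq:tensor maps} and Proposition~\ref{prop:morphismcat}(3), and then cites Lemma~\ref{lem:perpendicular}(2) and Proposition~\ref{prop:perp (co)relflection}(2). Your remark about the left/right side-swap relative to the literal statements of those two results is a fair point of care; the paper glosses over it, and the resolution is as you say---the arguments (and $\Tor$-balancedness) are symmetric in the two sides.
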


\begin{proof}
Consider again the subcategory $\widetilde{\Ccal} = \{C_\sigma\mid\sigma\in\Sigma\} \cup \{C_{\id_A}\}$ in $\MorA$ and also
$\widetilde{\Ycal}_\sigma=\widetilde{\Ccal}^{\torp{0,1}}\subseteq\AMor$. Now $C_{\id_A}\otimes_{T_2(A)}C_\beta\cong N^1$ by~\eqref{eq:tensor maps} for any morphism $\beta\colon N^0\to N^1$ of left $A$-modules, so
\[ \widetilde{\Ccal}^{\torp{0,1}} = \{ C_{N\to0} \mid N\in\Ycal_\Sigma \} \]
by Proposition~\ref{prop:morphismcat}(3).
The rest of the proof is completely analogous to the proof of Lemma~\ref{lem:perp to proj}, using Lemma~\ref{lem:perpendicular}(2) and Proposition~\ref{prop:perp (co)relflection}(2) this time.
\end{proof}

\begin{remark} \label{rem:silting ring epi}
In the cases where the class $\Xcal_\Sigma$ is also closed under coproducts or $\Ycal_\Sigma$ is closed under products (e.~g.\ if $\Sigma$ consists of maps between finitely generated projective modules), then $\Xcal_\Sigma$ or $\Ycal_\Sigma$ is the essential image of the forgetful functor $\lambda_*\colon \ModB \longrightarrow \ModA$ or $\lambda_*\colon \BMod \longrightarrow \AMod$, respectively, for a certain ring epimorphism $\lambda\colon A\longrightarrow B$ (see Theorem \ref{thm:epicl}).

In general, the class $\Xcal_\Sigma$ is the category of modules over an accessible additive monad (see \cite[Examples 1.3(4)]{PosPerp} for details) and $\Ycal_\Sigma$ is a Grothendieck category.
\end{remark}

We will show now that if $\Xcal_\Sigma$ is closed under coproducts, then $\Ycal_\Sigma$ is automatically closed under products and that they correspond to the same ring epimorphism. For this, we need the concept of definable subcategory.

\begin{definition}\label{def:definable}
A full subcategory $\Dcal$ of $\ModA$ or $\AMod$ is called definable if it is closed under direct limits, pure submodules and products.
\end{definition}

There is an order-preserving bijective correspondence between the class definable subcategories of $\ModA$ and the class of definable subcategories of $\AMod$ given by the so-called elementary duality. For our purposes, the correspondence can be described in the easiest way as follows. If a subcategory $\Dcal$ of $\ModA$ is definable, we put 
\begin{equation}\label{eq:dual definable}
\Dcal^+ = \{ N\in\AMod \mid N^+\in\Dcal \},
\end{equation}
where $(-)^+=\Hom{\Z}{-}{\QQ/\Z}$ denotes the character dual. Then $\Dcal^+$ is the corresponding definable subcategory of the left modules and $(\Dcal^+)^+=\Dcal$. We refer to \cite[\S3.4.2]{P} for details.

\begin{lemma} \label{lem:elementary dual ring epi}
Let $\lambda\colon A \longrightarrow B$ be a ring epimorphism and denote by $\Xcal_B$ the essential image of $\lambda_*\colon \ModB \longrightarrow \ModA$ and by $\Ycal_B$ the essential image of $\lambda_*\colon \BMod \longrightarrow \AMod$. Then both $\Xcal_B$ and $\Ycal_B$ are definable in their respective module categories and are elementarily dual to each other, i.~e.\ $\Xcal_B^+ = \Ycal_B$ and $\Ycal_B^+ = \Xcal_B$.
\end{lemma}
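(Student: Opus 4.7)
The plan is to establish a single elementary-duality criterion and then read off both assertions of the lemma from it: namely, for every right $A$-module $X$, $X \in \Xcal_B$ if and only if $X^+ \in \Ycal_B$, and symmetrically $N \in \Ycal_B$ if and only if $N^+ \in \Xcal_B$ for every left $A$-module $N$.

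To establish this criterion I would dualise the unit $\eta_X\colon X \to X \otimes_A B$ of the induction--restriction adjunction $(-\otimes_A B)\dashv\lambda_*$ on right modules. The standard hom--tensor isomorphism $(X \otimes_A B)^+ \cong \Hom{A}{B}{X^+}$ identifies $(\eta_X)^+$ with the evaluation-at-$1$ morphism $\Hom{A}{B}{X^+} \to X^+$, which is precisely the counit at $X^+$ of the other adjunction $\lambda_* \dashv \Hom{A}{B}{-}$ between $\BMod$ and $\AMod$. Since $(-)^+$ reflects isomorphisms, $\eta_X$ is an isomorphism iff this counit is; the former condition reads $X \in \Xcal_B$, while the latter, by fully faithfulness of $\lambda_*\colon\BMod\to\AMod$ (the ring-epimorphism hypothesis), is equivalent to $X^+ \in \Ycal_B$.

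Both $\Xcal_B$ and $\Ycal_B$ are bireflective in their respective module categories by Theorem~\ref{thm:epicl} applied on each side, hence closed under all limits and colimits, and in particular under products, direct limits and direct summands. To prove definability it remains to verify closure under pure submodules: if $X'\hookrightarrow X$ is a pure monomorphism in $\ModA$ with $X \in \Xcal_B$, then the character dual $X^+ \twoheadrightarrow (X')^+$ is a split epimorphism in $\AMod$, so $(X')^+$ is a direct summand of $X^+ \in \Ycal_B$; closure of $\Ycal_B$ under summands gives $(X')^+ \in \Ycal_B$, and the criterion then yields $X' \in \Xcal_B$. The left-module case is symmetric. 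With definability in hand, the equalities $\Xcal_B^+ = \Ycal_B$ and $\Ycal_B^+ = \Xcal_B$ are immediate from the defining formula $\Dcal^+ = \{N\in\AMod \mid N^+\in\Dcal\}$ and the criterion.

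The only step requiring real care is the identification $(\eta_X)^+ = \epsilon'_{X^+}$ via the hom--tensor isomorphism, which is a short trace through the definitions; no serious obstacle is expected.
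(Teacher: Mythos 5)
Your proof is correct, but it takes a genuinely different route from the paper: the paper disposes of both claims by citation, taking definability of $\Xcal_B$ and $\Ycal_B$ from \cite[Corollary 5.5.4]{P} and the elementary duality between them from \cite[Example 3.4.22]{P}, whereas you give a self-contained verification of what lies behind those references. Your key steps all check out. Since $\lambda$ is a ring epimorphism, $\lambda_*$ is fully faithful on both sides, so the counit of $(-\otimes_AB)\dashv\lambda_*$ and the unit of $\lambda_*\dashv\Hom{A}{B}{-}$ are invertible, and the triangle identities then show that membership in the essential image is exactly invertibility of the remaining (co)unit; the identification of $(\eta_X)^+$ with the evaluation map $\Hom{A}{B}{X^+}\to X^+$ under $(X\otimes_AB)^+\cong\Hom{A}{B}{X^+}$ is a routine currying computation; and $(-)^+$ reflects isomorphisms because $\QQ/\Z$ is an injective cogenerator. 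Your handling of pure submodules via the split dual epimorphism is the standard trick and is not circular, since the duality criterion is established beforehand; closure under products and direct limits indeed follows from bireflectivity (Theorem~\ref{thm:epicl}). One point worth stating explicitly: the paper's formula $\Dcal^+=\{N\mid N^+\in\Dcal\}$ is only declared to compute the elementary dual when $\Dcal$ is definable, so the definability part of the lemma must be settled before the final sentence of your argument --- which your ordering does respect. What your approach buys is independence from the model-theoretic machinery of \cite{P}; what the citation buys is brevity and the contextual fact that these classes sit inside Prest's general duality of definable subcategories.
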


\begin{proof}
The classes $\Xcal_B$ and $\Ycal_B$ are definable by \cite[Corollary 5.5.4]{P}, and related by the elementary duality by~\cite[Example 3.4.22]{P}.
\end{proof}

As an immediate consequence, we obtain the following.

\begin{proposition} \label{prop:left right generalised localization}
Let $\Sigma$ be a set of morphisms in $\ProjA$ and suppose that $\Xcal_\Sigma$ is closed under coproducts. Then the class
\begin{align*}
\Ycal_\Sigma &= \{Y\in \AMod \mid \sigma\otimes_AY \text{ is bijective for each } \sigma\in\Sigma \}
\end{align*}
is closed under products. Moreover, there exists a ring epimorphism $\lambda\colon A\longrightarrow B$ such that $\Xcal_\Sigma$ is the essential image of $\lambda_*\colon \ModB \longrightarrow \ModA$ and $\Ycal_\Sigma$ is the essential image of $\lambda_*\colon \BMod \longrightarrow \AMod$.
\end{proposition}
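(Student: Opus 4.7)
The plan is to first extract a ring epimorphism from $\Xcal_\Sigma$ via Theorem~\ref{thm:epicl}, and then identify $\Ycal_\Sigma$ with the corresponding essential image on left modules using the elementary duality recalled in Lemma~\ref{lem:elementary dual ring epi}.

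For the first step, I would verify the hypotheses of Theorem~\ref{thm:epicl} for $\Xcal_\Sigma$. By Lemma~\ref{lem:perp to proj}, $\Xcal_\Sigma$ is a wide subcategory of $\ModA$ closed under products; combined with the standing assumption that it is closed under coproducts, it is closed under kernels, cokernels, products and coproducts, hence under all limits and colimits. Theorem~\ref{thm:epicl} then produces a ring epimorphism $\lambda\colon A\longrightarrow B$ (unique up to equivalence) such that $\Xcal_\Sigma$ is the essential image of $\lambda_\ast\colon\ModB\la\ModA$; since $\Xcal_\Sigma$ is wide, one also has $\Tor{1}{A}{B}{B}=0$.

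For the second step, denote by $\Ycal_B$ the essential image of $\lambda_\ast\colon\BMod\la\AMod$. By Lemma~\ref{lem:elementary dual ring epi}, $\Ycal_B=\Xcal_B^+=\Xcal_\Sigma^+$, so it suffices to prove $\Ycal_\Sigma=\Xcal_\Sigma^+$. The crucial observation is the natural hom-tensor duality
\[ \Hom{A}{\sigma}{Y^+}\cong(\sigma\otimes_AY)^+ \]
available for every morphism $\sigma\colon P_1\la P_0$ in $\ProjA$ and every $Y\in\AMod$; it follows from the tensor-hom adjunction together with the fact that the character dual $(-)^+$ is exact. Since the character functor reflects bijectivity, $\sigma\otimes_AY$ is bijective if and only if $(\sigma\otimes_AY)^+$ is bijective, which in turn is equivalent to $Y^+\in\Xcal_\Sigma$. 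Unrolling the definition of $\Xcal_\Sigma^+$ in~\eqref{eq:dual definable}, this exactly says $Y\in\Ycal_\Sigma$ iff $Y\in\Xcal_\Sigma^+$, as needed.

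Finally, closure of $\Ycal_\Sigma$ under products is an immediate byproduct: $\Ycal_\Sigma=\Ycal_B$ is the essential image of a restriction-of-scalars functor between module categories, hence definable (Lemma~\ref{lem:elementary dual ring epi}) and in particular closed under products. I do not anticipate serious obstacles in this plan; the only nontrivial point is the identification $\Xcal_\Sigma^+=\Ycal_\Sigma$, which reduces to the standard hom-tensor duality applied objectwise in $\Sigma$, and the verification that $\Xcal_\Sigma$ actually qualifies as a bireflective subcategory — both of which are already prepared by the preceding lemmas.
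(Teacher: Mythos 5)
Your proposal is correct and follows essentially the same route as the paper: obtain the ring epimorphism $\lambda$ from bireflectivity of $\Xcal_\Sigma$, use the hom-tensor duality $(\sigma\otimes_AY)^+\cong\Hom{A}{\sigma}{Y^+}$ to identify $\Ycal_\Sigma$ with the elementary dual $\Xcal_\Sigma^+$, and then invoke Lemma~\ref{lem:elementary dual ring epi}. The paper is more terse — it cites Remark~\ref{rem:silting ring epi} for the existence of $\lambda$ and leaves the product-closure implicit in definability — but the underlying argument is identical.
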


\begin{proof}
A ring epimorphism $\lambda$ such that $\Xcal_\Sigma$ is the essential image of $\lambda_*\colon \ModB \longrightarrow \ModA$ exists by Remark~\ref{rem:silting ring epi}. Since we have $(\sigma\otimes_AY)^+\cong \Hom{A}{\sigma}{Y^+}$ for each $Y$ in $\AMod$, we find that $Y$ lies in $\Ycal_\Sigma$ \iff $Y^+$ lies in $\Xcal_\Sigma$. Thus, $\Ycal_\Sigma$ is the essential image of $\lambda_*\colon \BMod \longrightarrow \AMod$ by Lemma~\ref{lem:elementary dual ring epi}.
\end{proof}

\begin{problem} \label{prob:generalised localization}
The previous lemma says that if $\Xcal_\Sigma$ is closed under coproducts, then $\Ycal_\Sigma$ is closed under products. It would be interesting to know if the converse is true as well. If $\Ycal_\Sigma$ is closed under products, we know that it is associated with a ring epimorphism $\lambda\colon A\la B$ by Remark~\ref{rem:silting ring epi}. A more refined version of this question is whether $\Xcal_\Sigma$ is necessarily the essential image of $\lambda_*\colon \ModB \longrightarrow \ModA$. Since $B\in\Ycal_\Sigma$, when viewed as a left $A$-module, we immediately see that $B^+\in\Xcal_\Sigma$. Since $\Xcal$ is closed under products and kernels and $B^+$ is an injective cogenerator of $\ModB$, it follows that $\im\lambda_*\subseteq\Xcal_B$. Thus, it remains to decide whether the converse inclusion holds.
\end{problem}

\subsection{Definition of silting epimorphisms}
Now we will characterize the situation where $\Xcal_\Sigma$ is induced by (a projective presentation of) a partial silting module.

\begin{proposition} \label{prop:loc at proj silting}
Let $\Sigma$ be a set of morphisms in $\ProjA$. 
Then $\Dcal_\Sigma$ is closed under coproducts if and only if $\Dcal_\Sigma$ is a silting class.

If this is the case, the class $\Xcal_\Sigma$ is wide and bireflective, and there exists a partial silting module $T_1$ with respect to a projective presentation $\sigma_1$ such that $\Dcal_\Sigma = \Dcal_{\sigma_1}$ and $\Xcal_\Sigma = \Xcal_{\sigma_1}$.
Moreover, the corresponding ring epimorphism $\lambda\colon A\longrightarrow B$ has the property that $\sigma\otimes_A B$ is an isomorphism for each $\sigma\in\Sigma$ and it is a universal ring homomorphism with this property.
\end{proposition}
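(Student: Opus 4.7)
The plan breaks naturally into three parts: the equivalence of closure under coproducts with being a silting class, the construction of a partial silting presentation, and the assembly of the ring epimorphism together with its universal property.

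The ``if'' direction of the equivalence is immediate: a silting class $\Gen T$ is closed under epimorphic images and hence under coproducts. For the converse, I would first reduce to a single morphism by setting $\sigma_1:=\bigoplus_{\sigma\in\Sigma}\sigma\in\ProjA$; since $\Hom{A}{-}{X}$ sends direct sums of maps to products, both $\Dcal_{\sigma_1}=\Dcal_\Sigma$ and $\Xcal_{\sigma_1}=\Xcal_\Sigma$. Combined with the closure of $\Dcal_\sigma$-classes under extensions, products, and quotients from \cite[Lemma~3.6(1)]{AMV1}, the coproduct-closure hypothesis promotes $\Dcal_{\sigma_1}$ to a torsion class in $\ModA$.

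The heart of the argument is to exhibit a partial silting module $T_1$ with projective presentation $\sigma_1'$ such that $\Dcal_{\sigma_1'}=\Dcal_{\sigma_1}$ and $\Xcal_{\sigma_1'}=\Xcal_{\sigma_1}$. The naive choice $T_1:=\Coker\sigma_1$ is in general not in $\Dcal_{\sigma_1}$; for instance with $A=\Z$ and $\sigma_1$ multiplication by $2$, the cokernel $\Z/2$ lies outside the class of $2$-divisible groups. Instead, I would work in the morphism category $\MorA\simeq\rmod{T_2(A)}$, where $C_{\sigma_1}$ has projective dimension at most one by Proposition~\ref{prop:morphismcat}(1), and the torsion-class property of $\Dcal_{\sigma_1}$ translates via Proposition~\ref{prop:morphismcat}(2) into $\Ext{1}{T_2(A)}{C_{\sigma_1}}{(C_{\sigma_1})^{(I)}}=0$ for all sets~$I$. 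A Bongartz-style completion should then furnish a morphism $\sigma_1'$ in $\MorpA$ such that $C_{\sigma_1'}\oplus C_{\sigma_1}$ is tilting in $\MorA$, whence $T_1:=\Coker\sigma_1'$ is a partial silting module with $\Dcal_{\sigma_1'}=\Dcal_\Sigma$. Applying Theorem~\ref{thm:completion} then upgrades $T_1$ to a silting module $T$ with $\Gen T=\Dcal_\Sigma$, establishing that $\Dcal_\Sigma$ is a silting class. This step is the main obstacle: one must carefully run the Bongartz-style construction inside $\MorpA$ (rather than just $\MorA$) and check that the resulting presentation preserves both the divisibility and the bireflective class.

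With $T_1$ and $\sigma_1'$ in hand, the class $\Xcal_\Sigma=\Xcal_{\sigma_1'}=\{X\in\ModA\mid\Hom{A}{T_1}{X}=\Ext{1}{A}{T_1}{X}=0\}$ is wide and closed under products by Lemma~\ref{lem:perp to proj}; closure under coproducts then follows from the finite-type presentation of the silting class (Theorem~\ref{thm:finite type silting}), which lets one rewrite the perpendicular condition against a set of finitely presented test modules so that both $\Hom{A}{-}{-}$ and $\Ext{1}{A}{-}{-}$ commute with coproducts in the second argument. Hence $\Xcal_\Sigma$ is bireflective, and Theorem~\ref{thm:epicl} yields the corresponding ring epimorphism $\lambda\colon A\to B$. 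Finally, for the universal property, observe that $\sigma\otimes_A B$ being an isomorphism for each $\sigma\in\Sigma$ is the condition that $B$, viewed as a left $A$-module, lies in $\Ycal_\Sigma$; Proposition~\ref{prop:left right generalised localization} identifies $\Ycal_\Sigma$ with the essential image of $\lambda_*\colon\BMod\to\AMod$, so $B\in\Ycal_\Sigma$ for free. Any other ring homomorphism $\mu\colon A\to C$ with $\sigma\otimes_A C$ invertible for every $\sigma\in\Sigma$ places $C$ in the same essential image, making $C$ a $B$-module, and $\mu$ factors uniquely through $\lambda$ by the ring-epimorphism property.
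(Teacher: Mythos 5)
Your plan follows the same overall strategy as the paper---reduce to the morphism category $\MorA$, exploit that the relevant objects have projective dimension at most one there, invoke a Bongartz-style argument, and finally assemble the ring epimorphism via Proposition~\ref{prop:left right generalised localization}. The universal-property paragraph at the end is essentially the paper's argument and is correct. However, the middle of your proof has two concrete gaps.

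First, the Bongartz-style completion you propose produces the \emph{wrong} partial silting module. If $\sigma_1'$ is a Bongartz complement of $\sigma_1$, so that $C_{\sigma_1'}\oplus C_{\sigma_1}$ is tilting in $\MorA$, then one has $C_{\sigma_1'}^{\perp_1}\supseteq C_{\sigma_1}^{\perp_1}$ and typically this inclusion is strict. Hence $\Dcal_{\sigma_1'}\supseteq\Dcal_\Sigma$, not equality. What is actually needed, and what the paper builds through the argument of \cite[Theorem~3.4(1)]{MS}, is a single partial tilting object $\widetilde{T}_1$ in $\MorA$ satisfying both $\widetilde{T}_1^{\perp_1}=\widetilde{\Ccal}^{\perp_1}$ \emph{and} $\widetilde{T}_1^{\perp_{0,1}}=\widetilde{\Ccal}^{\perp_{0,1}}$ simultaneously. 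The key ingredient that makes that argument apply to a set $\Sigma$ of maps between possibly infinitely generated projectives is the observation that, because every object of $\widetilde{\Ccal}$ has projective dimension $\le 1$ in $\MorA$ and $\widetilde{\Ccal}^{\perp_1}$ is assumed closed under coproducts, the class $\widetilde{\Ccal}^{\perp_1}$ is a tilting torsion class by \cite[Corollary~14.6]{GT}; this is exactly what replaces the finite-presentation hypothesis of \cite[Theorem~3.4(1)]{MS}. You flag this step as the main obstacle but do not supply this ingredient.

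Second, your argument that $\Xcal_\Sigma$ is closed under coproducts does not work. Replacing $\Sigma$ by a set $\Sigma'$ of maps between finitely generated projectives with $\Dcal_\Sigma=\Dcal_{\Sigma'}$ (Theorem~\ref{thm:finite type silting}) does \emph{not} give $\Xcal_\Sigma=\Xcal_{\Sigma'}$. For instance, over $\Z$ with $\sigma\colon\Z\overset{2}{\to}\Z$ and $\tau\colon 0\to\Z$, one has $\Dcal_{\{\sigma,\tau\}}=\Dcal_\sigma$ but $\Xcal_{\{\sigma,\tau\}}=\{0\}\ne\Xcal_\sigma$. The bireflective class genuinely depends on the kernel condition $\Hom{A}{T_1}{X}=0$, not only on the divisibility class. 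The paper instead uses the identity $\Xcal_\Sigma=\Dcal_\Sigma\cap T_1^{\perp_0}$ for the particular $T_1=\Coker\sigma_1$ furnished by the MS argument, and concludes coproduct-closure from the coproduct-closure of the two pieces; this is a different route from yours and relies on the special form of $\sigma_1$, not on switching to a finite-type set.
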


\begin{proof}
The equivalence between $\Dcal_\Sigma$ being closed under coproducts and being a silting class follows from \cite[Theorem 6.3]{MS}.

The second statement regarding the existence of $\sigma_1$ follows essentially by the same argument as in \cite[Theorem 6.7]{MS}. The assumption there was that $\Sigma$ consists of maps between finitely generated projective modules, but, in fact, it is enough to assume that $\Dcal_\Sigma$ is closed under coproducts, as we do here.
More precisely, the same reduction as there implies that it suffices to show as in \cite[Theorem 3.4(1)]{MS} that there is a partial tilting object $\widetilde{T}_1$ in $\MorA$ such that $\widetilde{T}_1^{\perp_1} = \widetilde{\Ccal}^{\perp_1}$ and $\widetilde{T}_1^{\perp_{0,1}} = \widetilde{\Ccal}^{\perp_{0,1}}$, where $\widetilde{\Ccal}=\{C_\sigma\mid\sigma\in\Sigma\}\cup\{C_{\id_A}\}$ (the same class as in the proof of Lemma~\ref{lem:perp to proj}). Here, the difference from \cite[Theorem 3.4(1)]{MS} is that our assumptions do not ensure that the objects in $\widetilde{\Ccal}$ are finitely presented. However, the objects in $\widetilde{\Ccal}$ are of projective dimension at most one in $\MorA$ and the class
\[ \widetilde{\Ccal}^{\perp_1} = \{ C_\gamma \in \MorA \mid \Coker\gamma\in\Dcal_\Sigma \} \]
is assumed to be closed under coproducts. In particular, the class $\widetilde{\Ccal}^{\perp_1}$ is a tilting torsion class in $\MorA$ by \cite[Corollary 14.6]{GT}. This is all that is needed for the proof of \cite[Theorem 3.4(1)]{MS} to work. It remains to note that $\Xcal_\Sigma$ is bireflective, or equivalently by Lemma~\ref{lem:perp to proj}, closed under coproducts. However, this is clear from the fact that $\Xcal_\Sigma=\Dcal_\Sigma\cap T_1^{\perp_0}$ and both classes in the intersection are closed under coproducts (we assume that about $\Dcal_\Sigma$ and $T_1^{\perp_0}$ is a torsion-free class in $\ModA$).

To prove the last part, suppose that $\lambda\colon A \longrightarrow B$ is a ring epimorphism such that $\im\lambda_\ast = \Xcal_\Sigma$ and let $\mu\colon A \longrightarrow C$ be any ring homomorphism. We know from Proposition~\ref{prop:left right generalised localization} that $\sigma\otimes_AC$ is an isomorphism for each $\sigma$ in $\Sigma$ \iff $C$ is in the essential image of $\lambda_*\colon \BMod\longrightarrow\AMod$.
In such a case $C$ canonically becomes a $B$-$C$-bimodule and we have a ring homomorphism $B \longrightarrow \End{C_C}$ which sends $b\in B$ to $(b\cdot-)$. If we identify $\End{C_C}$ with $C$ via the isomorphism $c\longmapsto(c\cdot-)$, one directly checks that we have obtained a factorization of $\mu$ through $\lambda$ and this factorization is unique since $\lambda$ is a ring epimorphism.
\end{proof}

\begin{definition}[{\cite{AMV2}}] \label{def:silting epi}
A ring epimorphism arising from a partial silting module as in Proposition~\ref{prop:loc at proj silting} will be said to be a {\em right silting ring epimorphism}. An analogous situation where $\Sigma$ is a set of morphisms between projective left modules gives rise to {\em left silting ring epimorphisms}.
\end{definition}

\begin{remark} \label{rem:silting epi} 
(1) Thanks to Theorem~\ref{thm:epicl}, we have $\Tor{1}{A}{B}{B}=0$ for
every silting ring epimorphism $\lambda\dd A\longrightarrow B$. In general, however, $\lambda$ may not be a homological epimorphism in the sense of~\cite[Definition 4.5]{GL}, i.~e.~$\Tor{i}{A}{B}{B}$ can be non-trivial for $i>1$, see the examples in \cite[Section 4]{AMV2}.
However, if $A$ is a commutative noetherian ring, the condition that $\Tor{1}{A}{B}{B}=0$ implies even that $\lambda$ is a flat ring epimorphism, \cite[Proposition 4.5]{AMSTV}. The situation where $A$ is a general commutative ring will be discussed in detail in Section~\ref{sec:flat is silting} (see also item (3) in this remark).

\smallskip

(2) As shown by Schofield in \cite{Sch}, for every set $\Sigma$ of morphisms in $\projA$ there is a ring epimorphism $\lambda_\Sigma\colon A\longrightarrow A_\Sigma$, called the \emph{universal localization} of $A$ at $\Sigma$,  which is universal with respect to the property that $\sigma\otimes_A A_\Sigma$ is an isomorphism for every $\sigma$ in $\Sigma$.  This is in fact a special case of Proposition~\ref{prop:loc at proj silting} since in this case $\Dcal_\Sigma$ is clearly closed under coproducts, so Proposition~\ref{prop:loc at proj silting} generalizes~\cite[Theorem~6.7]{MS}.

\smallskip

(3) The condition that $\Dcal_\Sigma$ is closed under coproducts as in Proposition~\ref{prop:loc at proj silting} is strictly stronger than just asking that $\Xcal_\Sigma$ be closed under coproducts as in Proposition~\ref{prop:left right generalised localization}. Indeed, let $A$ be a commutative domain with a non-zero proper ideal $I$ such that
\begin{itemize}
\item $\Tor{i}{A}{A/I}{A/I}=0$ for all $i>0$ and
\item $I$ has projective dimension one over $A$.
\end{itemize}
This situation appeared in \cite[\S2]{Kel-smash}, see also~\cite[\S7]{KS}.
Then $\Tor{i}{A}{I}{A/I}=\Tor{i+1}{A}{A/I}{A/I}=0$ for $i=0,1$, so if $\sigma\colon P_1\longrightarrow P_0$ is a projective presentation of $I$, it follows that $\sigma\otimes_AA/I$ is an isomorphism.
Moreover, the class $\Xcal_\sigma=\{X\in\ModA\mid\Hom{A}{\sigma}{X}\}=I^{\perp_{0,1}}$ consists precisely of all modules annihilated by~$I$. Indeed, if $\Hom{A}{I}{X}=0$, then $\Hom{A}{A/I}{X}\to\Hom{A}{A}{X}\cong X$ is an isomorphism, so $XI=0$. Conversely, if $XI=0$, then $X$ is an $A/I$-module, $\Ext{>0}{A}{A/I}{X}\cong\Ext{>0}{A/I}{A/I}{X}=0$ by~\cite[Theorem 4.4]{GL} and so $X\cong\Rhom{A}{A/I}{X}$. Then
\[ \Rhom{A}{I}{X}\cong\Rhom{A}{I}{\Rhom{A}{A/I}{X}}\cong \Rhom{A}{I\lten{A}A/I}{X} = 0. \]
Thus, the class $\Xcal_\sigma$ is closed under coproducts and is associated with the ring epimorphism $\lambda\colon A \longrightarrow A/I$ in the sense of Proposition~\ref{prop:left right generalised localization}.
On the other hand, it will follow from Theorem~\ref{thm:flat is silting} that $\lambda$ cannot be a silting epimorphism since $A/I$ is not flat over $A$ (as $\Tor{1}{A}{A/aA}{A/I}\cong A/I\ne 0$ for each $0\ne a\in I$). 
\end{remark}

In general, it is not clear how to recognize whether a given ring epimorphism is silting, nor there seem to be an easy way to recover a partial silting module even when we know that the ring epimorphism in question is silting.
Note that every ring epimorphism $\lambda$ gives rise to an exact sequence 
$$\xymatrix{A\ar[r]^\lambda& B\ar[r]& \Coker{\lambda}\ar[r]& 0}.$$ 
One might ask whether $\Coker\lambda$ itself can be a partial silting module when $\lambda$ is a silting ring epimorphism. We provide three examples---one class of ring epimorphisms where this simple idea works, one example where it fails, and finally an ad hoc alternative construction which works in the second case. In the next section, we will provide a more systematic method for recovering a partial silting module from a ring epimorphism.

\begin{example}\label{expl:silting pd1} 
Let $\lambda\colon A\longrightarrow B$ be a ring epimorphism. 
If $\Tor{1}{A}{B}{B}=0$ and $B$ has projective dimension at most one over $A$, then $\lambda$ is a silting ring epimorphism. In fact, $\lambda$ as a map of right modules is quasi-isomorphic to a map $\sigma\dd P_1\la P_0$ between projective $A$-modules which provides a suitable projective presentation demonstrating the fact that $\Coker\lambda$ is partial silting, and the corresponding silting torsion class in $\ModA$ is $\Dcal_\sigma=\Gen B$.
See~\cite[Example 6.5]{MS}.

This applies, for example, to the situation where $A$ is commutative noetherian of Krull dimension at most one and $B$ is flat over $A$ (i.~e.\ $\lambda$ is a flat ring epimorphism). Indeed, then every flat $A$-module has projective dimension at most one by \cite[Corollaire~3.2.7]{RG}.
However, in higher Krull dimensions, the situation can become more complicated as illustrated in the next example.
\end{example}

\begin{example}\label{expl:example not partial silting}
If $A$ is a regular commutative noetherian ring and $\lambda\colon A\longrightarrow B$ is a ring epimorphism with $\Tor{1}{A}{B}{B}=0$, then $\lambda$ is a universal localization (in the sense of Remark~\ref{rem:silting epi}(2)) at a set of maps between finitely generated projective $A$-modules. This follows from~\cite[Proposition~4.5 and Theorem~5.13]{AMSTV}.
In particular, as seen in Proposition \ref{prop:loc at proj silting}, such a ring epimorphism is silting.

An easy example of a ring epimorphism satisfying the above conditions is the inclusion $\lambda$ of the regular local ring $A=\mathbb{K}[[x,y]]$ of Krull dimension 2 to its field of fractions $B=\mathbb{K}(\!(x,y)\!)$. We will show that the cokernel of $\lambda$ in $\ModA$, let us call it $T$, is not partial silting with respect to any projective presentation. Note first that $T$ has
\begin{itemize}
\item injective dimension at most $1$ (since $B$ is the injective envelope of $A$ in $\ModA$ and $A$ is a ring of global dimension 2) and
\item projective dimension $2$ (see \cite[Corollary 2.59]{Os}).
\end{itemize}
Let $\sigma$ be an arbitrary projective presentation of $T$ and complete it to a projective resolution in $\ModA$ as follows:
$$\xymatrix{0 \ar[r]& P_2 \ar[r]^\mu& P_1 \ar[r]^\sigma& P_0 \ar[r]& T \ar[r]& 0}. $$
Clearly, since $T$ has injective dimension $1$, $\Ext{2}{A}{T}{T}=0$ and therefore $\Hom{A}{\mu}{T}$ is surjective. Moreover, since $A$ is local, $P_2$ is a free module, and therefore we have $\Hom{A}{P_2}{T}\neq 0$. This shows that $\Hom{A}{\sigma}{T}$ cannot be surjective and thus $T$ cannot belong to $\Dcal_\sigma$, whatever the choice of projective presentation $\sigma$.
\end{example}

\begin{example}\label{expl:partial silting univ loc}
Given a commutative ring $A$ and a set $\Sigma$ of maps in $\projA$, we will show an explicit construction of a map $\sigma\dd P_1\la P_0$ in $\ProjA$ that is a resolution of a partial silting module $T_\Sigma$ with $\Dcal_\Sigma=\Dcal_{\sigma}$ and $\Xcal_\Sigma=\Xcal_{\sigma}$. This illustrates how Proposition~\ref{prop:loc at proj silting} works for universal localizations from Remark~\ref{rem:silting epi}(2) and remedies Example~\ref{expl:example not partial silting}. The method is formally similar to~\cite[Construction 4.5]{AH}, which was in turn inspired by the constructions of Fuchs~\cite{FuchsDiv} and Salce~\cite{SalceTilt}.

Let us choose an indexing set $I$ for $\Sigma$, so the maps in $\Sigma$ will be denoted by $\sigma_i\dd P_{1,i}\la P_{0,i}$, and we will denote by $I^+=\coprod_{n\ge 1} I^n$ the set of non-empty finite sequences of elements of $I$. The length of $\underline{i}\in I^+$, denoted by $\ell(\underline{i})$, is the unique $n\ge 1$ such that $\underline{i}\in I^n$, and given a sequence $\underline{i}=(i_0,\dots,i_{n-1})$ of length greater than $1$, we denote $\underline{i}^-=(i_0,\dots,i_{n-2})$. For all sequences $\underline{i}=(i_0,\dots,i_{n-1})\in I^+$, we inductively define morphisms $\sigma_{\underline{i}}\dd P_{1,\underline{i}}\la P_{0,\underline{i}}$ in $\projA$ as follows:
\begin{itemize}
\item $\sigma_{\underline{i}}=\sigma_{i_0}$ for $n=1$, and
\item $\sigma_{\underline{i}}=\sigma_{i_{n-1}}\otimes_A\Hom{A}{P_{1,i_{n-1}}}{P_{0,\underline{i}^-}}$ for $n>1$.
\end{itemize}
Note that the $A$-modules $\Hom{A}{P_{1,i_{n-1}}}{P_{0,\underline{i}^-}}$ are finitely generated projective, so each $\sigma_{\underline{i}}$ is a summand of a finite direct sum of copies of $\sigma_{i_{n-1}}\in\Sigma$.

Now we can construct the map $\sigma\colon P_1\la P_0$ as follows. The domain and the codomain will be taken as, respectively, $P_1=\bigoplus_{\underline{i}\in I^+}P_{1,\underline{i}}$ and $P_0=\bigoplus_{\underline{i}\in I^+}P_{0,\underline{i}}$. Any map $P_1\la P_0$ is determined by its components $P_{1,\underline{i}}\la P_{0,\underline{j}}$, and the only non-zero components of $\sigma$ will be
\begin{itemize}
\item $\sigma_{\underline{i}}\dd P_{1,\underline{i}}\la P_{0,\underline{i}}$ for each $\underline{i}\in I^+$, and
\item whenever the length of $\underline{i}$ is greater than $1$, the negative of the canonical evaluation maps $\varepsilon_{\underline{i}}\colon P_{1,\underline{i}}=P_{1,i_{n-1}}\otimes_A\Hom{A}{P_{1,i_{n-1}}}{P_{0,\underline{i}^-}}\la P_{0,\underline{i}^-}$ that sends $p\otimes f$ to $-f(p)$.
\end{itemize}
So we put $T_\Sigma=\Coker\sigma$ and we also denote by $\pi\colon P_0\la T_\Sigma$ the canonical projection. By construction, we have the equalities
\begin{equation}\label{eq:divisibility-by-sigma}
\pi(\sigma_{i_{n-1}}(p)\otimes f) = \pi(f(p))
\end{equation}
in $T_\Sigma$ for each $\underline{i}\in I^+$ of length greater than $1$, $p\in P_{1,i_{n-1}}$ and a morphism $f\colon P_{1,i_{n-1}}\la P_{0,\underline{i}^-}$.

It remains to prove that $\Dcal_\Sigma=\Dcal_{\sigma}$ and $\Xcal_\Sigma=\Xcal_{\sigma}$, and for this we will use the argument from~\cite[Theorems~3.4(1) and~6.7]{MS}.
Given any $n\ge 0$, we put $P_1^{\le n}=\bigoplus_{\underline{i}\in I^+,\,\ell(i)\le n}P_{1,\underline{i}}$ and $P_0^{\le n}=\bigoplus_{\underline{i}\in I^+,\,\ell(i)\le n}P_{0,\underline{i}}$. These are summands of $P_1$ and $P_0$, respectively, and the map $\sigma$ introduced above restricts to maps $\sigma^{\le n}\colon P_1^{\le n}\la P_0^{\le n}$. Moreover, in the notation of~\S\ref{subsec:torsion and silting}, we have by construction a sequence of subobjects of $C_\sigma\in\MorA$,
\begin{equation}\label{eq:filtration-univ-loc}
0=C_{\sigma^{\le 0}} \subseteq C_{\sigma^{\le 1}} \subseteq C_{\sigma^{\le 2}} \subseteq \cdots \subseteq C_\sigma,
\end{equation}
whose union is all of $\sigma$. Moreover, by construction, we have for each $n\ge 1$ that
\[
C_{\sigma^{\le n}}/C_{\sigma^{\le n-1}} \cong \bigoplus_{\underline{i}\in I^n}C_{\sigma_{\underline{i}}} \in \Add\{C_{\sigma_i} \mid \sigma_i\in\Sigma \}.
\]

Next, we claim that $T:=\Coker\sigma\in\Dcal_\Sigma$ in $\ModA$. To see that, let $\sigma_i\dd P_{1,i}\la P_{0,i}$ be an element of $\Sigma$ and $g\colon P_{1,i}\la T_\Sigma$ be a morphism. Then $g$ lifts to a morphism $f\colon P_{1,i}\la P_0 = \bigoplus_{\underline{j}\in I^+}P_{0,\underline{j}}$, and such $f$ can only have finitely many non-zero components $f_{\underline{j}}\colon P_{1,i}\la P_{0,\underline{j}}$. Given any such component $f_{\underline{j}}$ with $\underline{j}=(i_0,\dots,i_{n-1})$, we denote $\underline{j}^+=(i_0,\dots,i_{n-1},i)$ and consider a morphism
\[
h_{\underline{j}^+}\colon P_{0,i}\la P_{0,\underline{j}^+}=P_{0,i}\otimes_A\Hom{A}{P_{1,i}}{P_{0,\underline{j}}},
\quad
q \mapsto q\otimes f_{\underline{j}}.
\]
Now, given any $p\in P_{1,i}$, we have $\pi(f_{\underline{j}}(p))=\pi(\sigma_i(p)\otimes f_{\underline{j}})=\pi(h_{\underline{j}^+}(\sigma_i(p)))$ by~\eqref{eq:divisibility-by-sigma}, so the following triangle commutes.
\[
\xymatrix{
P_{1,i} \ar[r]^{\sigma_i} \ar[d]_{\pi f_{\underline{j}}=g_{\underline{j}}} & P_{0,i} \ar[dl]^{\pi h_{\underline{j}^+}} \\
T_\Sigma
}
\]
If we sum all the above maps $\pi h_{\underline{j}^+}\colon P_{0,i}\la T_\Sigma$ such that $f_{\underline{j}}\ne 0$, we obtain a factorization of $g$ through $\sigma_i$. This proves the claim.

To conclude, we invoke~\cite[Lemma~5.4]{MS} which tells us that $C_\sigma\in\{C_{\sigma_i}\mid \sigma_i\in\Sigma\}^{\perp_1}$, and along with the filtration from~\eqref{eq:filtration-univ-loc},  the above construction is just an explicit way to obtain the direct sum of the approximation sequences at the beginning of the proof of \cite[Theorem~3.4(1)]{MS} applied to the set of finitely presented objects $\{C_{\sigma_i}\mid \sigma_i\in\Sigma\}$ in $\MorA\simeq\rmod{T_2(A)}$. Now we can just follow the argument of ~\cite[Theorem~6.7]{MS}, which invokes \cite[Theorem~3.4(1)]{MS} for this set of objects.
\end{example}

\section{Ring epimorphisms: from flat to silting} \label{sec:flat in general}

In this section, we establish a criterion for a flat ring epimorphism to be a silting ring epimorphism. As a consequence, all flat ring epimorphisms originating in a commutative ring turn out to be silting epimophisms (Corollary~\ref{cor:commutative flat is silting}).
Since flat epimorphisms are closely related to Gabriel filters by Theorem~\ref{thm:perfectGT and ringepi}, we begin with an analysis of finiteness conditions on them.

\subsection{Finiteness conditions on Gabriel filters}
There are three finiteness conditions that we will consider on Gabriel filters. The strongest of them has already been discussed in Definition~\ref{def:perfect}: a Gabriel filter is perfect if and only if its associated Giraud subcategory is closed under arbitrary colimits. We will also consider two weaker finiteness notions:

\begin{definition}\label{def:reminder}\label{def:Giraud finite}
Let $\Gcal$ be a Gabriel filter of left ideals of $A$. We say that 
\begin{enumerate}
\item $\mathcal{G}$ is of \emph{finite type} if it has a filter basis consisting of finitely generated ideals or, equivalently (\cite[Lemma 2.4]{H}), if its associated hereditary torsion pair $(\Tcal_\Gcal,\Fcal_\Gcal)$ is of finite type (i.~e. $\Fcal_\Gcal$ is closed under direct limits);
\item $\mathcal{G}$ is \emph{Giraud-finite} if its associated Giraud subcategory $\Xcal_\Gcal$ is closed under direct limits.
\end{enumerate}
 \end{definition}
 
It is clear that every perfect Gabriel filter is Giraud-finite, and it is well-known that every perfect Gabriel filter if of finite type (\cite[Proposition 3.4]{St}).  The following statement says, in particular, that every Giraud-finite Gabriel filter is of finite type.

\begin{proposition}\label{prop:Giraud finite}\label{rem:Giraud finite}
The following are equivalent for a left Gabriel filter $\Gcal$:
\begin{enumerate}
\item The Giraud subcategory $\Xcal_\Gcal$ is closed under direct limits in $\AMod$.
\item $\Gcal$ is of finite type and each finitely generated left ideal $I\in\Gcal$ has a syzygy $\Omega(I)$ which admits a short exact sequence
\[ 0 \longrightarrow F_I \longrightarrow \Omega(I) \longrightarrow T_I \longrightarrow 0 \]
with $F_I$ finitely generated and $T_I\in\Tcal_G$.
\end{enumerate}
\end{proposition}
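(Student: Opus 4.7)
The plan is to prove both implications by exploiting two features of the Giraud reflection $\ell=(-)_\Gcal\dd\AMod\la\Xcal_\Gcal$: it is \emph{exact} (by the definition of a Giraud subcategory) and, as a left adjoint, it preserves all colimits. I will also use throughout that $\Xcal_\Gcal\subseteq\Fcal_\Gcal$, which is visible from the description of $\Xcal_\Gcal$.

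For (1)$\Rightarrow$(2), the finite-type conclusion follows by a short colimit argument: given $N=\varinjlim N_\beta$ with $N_\beta\in\Fcal_\Gcal$, the reflection sequence~\eqref{eq:reflect} collapses to $0\la N_\beta\la (N_\beta)_\Gcal\la C_\beta\la 0$ with $(N_\beta)_\Gcal\in\Xcal_\Gcal$ and $C_\beta\in\Tcal_\Gcal$; taking colimits and invoking hypothesis~(1) exhibits $N$ as a submodule of $\varinjlim (N_\beta)_\Gcal\in\Xcal_\Gcal\subseteq\Fcal_\Gcal$, so $N\in\Fcal_\Gcal$. For the syzygy condition, I fix a finitely generated $I\in\Gcal$ with a presentation $0\la\Omega(I)\la A^n\la I\la 0$. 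Applying the exact functor $\ell$ and using $\ell(A/I)=0$ gives $\ell(I)=A_\Gcal$ and a short exact sequence
\[
0\la\ell(\Omega(I))\la (A_\Gcal)^n\la A_\Gcal\la 0
\]
that splits because $A_\Gcal$ is free of rank one over itself; hence $\ell(\Omega(I))$ has finitely many $A_\Gcal$-generators $y_1,\dots,y_k$. Since $\Coker\psi_{\Omega(I)}\in\Tcal_\Gcal$ and $\Gcal$ is now of finite type, each $y_i$ is annihilated modulo $\im\psi_{\Omega(I)}$ by some finitely generated $J_i\in\Gcal$; choosing preimages in $\Omega(I)$ of the products $b\cdot y_i$ for $b$ ranging through generators of $J_i$, I take $F_I\subseteq\Omega(I)$ to be the finitely generated submodule they span. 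By exactness of $\ell$, the quotient $\ell(\Omega(I))/\ell(F_I)$ equals $\ell(\Omega(I)/F_I)$ and therefore lies in $\Xcal_\Gcal\subseteq\Fcal_\Gcal$; yet in it each class $\bar y_i$ is annihilated by $J_i\in\Gcal$, and because a $\Gcal$-torsion element of a $\Gcal$-torsion-free module vanishes, $y_i\in\ell(F_I)$ for all $i$. Hence $\ell(F_I)=\ell(\Omega(I))$ and $\Omega(I)/F_I\in\Tcal_\Gcal$.

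For (2)$\Rightarrow$(1), take $M=\varinjlim M_\beta$ with $M_\beta\in\Xcal_\Gcal$; by finite type it suffices to verify $\Hom{A}{A/I}{M}=0=\Ext{1}{A}{A/I}{M}$ for finitely generated $I\in\Gcal$. The first vanishing is immediate since $A/I$ is finitely presented. For the second, equivalently the bijectivity of the restriction map $M\la\Hom{A}{I}{M}$, I replace $I$ by the finitely presented approximation $I':=A^n/F_I$ supplied by the syzygy condition, so that $0\la\Omega(I)/F_I\la I'\la I\la 0$ with $\Omega(I)/F_I\in\Tcal_\Gcal$. For any $X\in\Fcal_\Gcal$, the long exact $\Hom$-sequence yields $\Hom{A}{I}{X}\cong\Hom{A}{I'}{X}$; this applies both to each $M_\beta$ and to $M$, which lies in $\Fcal_\Gcal$ by finite type. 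Since restriction along the composite $I'\twoheadrightarrow I\hookrightarrow A$ is bijective on every $M_\beta\in\Xcal_\Gcal$ and $\Hom{A}{I'}{-}$ commutes with direct limits ($I'$ being finitely presented), taking colimits delivers the desired bijection $M\cong\Hom{A}{I'}{M}\cong\Hom{A}{I}{M}$.

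The step I expect to be most delicate is the construction of $F_I$ in (1)$\Rightarrow$(2). Exactness of $\ell$ is needed to identify $\ell(\Omega(I)/F_I)$ with $\ell(\Omega(I))/\ell(F_I)$, and one should note that $A$-submodules and $A_\Gcal$-submodules of an object of $\Xcal_\Gcal$ coincide (because $A\la A_\Gcal$ is a ring epimorphism, so that $A$-linear and $A_\Gcal$-linear maps between $A_\Gcal$-modules agree) in order to conclude $\ell(F_I)=\ell(\Omega(I))$ from $\ell(F_I)$ merely containing an $A_\Gcal$-generating set of $\ell(\Omega(I))$.
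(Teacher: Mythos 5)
Your direction (2)$\Rightarrow$(1) is sound and essentially parallel to the paper's, just repackaged: you replace the two-term projective complex $P_I$ by the finitely presented module $I'=A^n/F_I$ and compare $\Hom{A}{I}{-}$ with $\Hom{A}{I'}{-}$ on $\Gcal$-torsion-free modules; taking direct limits then works because $A/I$ and $I'$ are finitely presented. The finite-type part of (1)$\Rightarrow$(2) is also fine (a small variation on the paper's argument via $t(M)=\Ker\psi_M$).

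The syzygy part of (1)$\Rightarrow$(2), however, has a genuine gap. You claim the exact sequence $0\to\ell(\Omega(I))\to A_\Gcal^n\to A_\Gcal\to 0$ in $\Xcal_\Gcal$ splits ``because $A_\Gcal$ is free of rank one over itself.'' But that would require $A_\Gcal$ to be a \emph{projective object of the abelian category} $\Xcal_\Gcal$, which is equivalent to $\Gcal$ being perfect --- a strictly stronger hypothesis than (1). Under (1) the object $A_\Gcal$ is merely \emph{finitely presentable} in $\Xcal_\Gcal$, and the epimorphism $A_\Gcal^n\to A_\Gcal$ in $\Xcal_\Gcal$ need not be surjective as a map of $A$-modules (its cokernel in $\AMod$ is only $\Gcal$-torsion), so lifting $1_{A_\Gcal}$ is in general impossible. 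A concrete counterexample: take $A=k[x,y]$, $\Gcal=\Gcal_{V(\mathfrak m)}$ with $\mathfrak m=(x,y)$, and $I=\mathfrak m$. Then $A_\Gcal=A$, the exact sequence in $\Xcal_\Gcal$ is the Koszul sequence $0\to A\to A^2\xrightarrow{(x,y)}A\to 0$, and a splitting $A\to A^2$ would require a solution of $xa+yb=1$ in $A$, which does not exist. So the sequence does not split, although $\Gcal$ is Giraud-finite (Corollary~\ref{cor:Giraud finite}). To obtain that $\ell(\Omega(I))$ is finitely generated in $\Xcal_\Gcal$ one must instead invoke the Grothendieck-category fact \cite[Proposition V.3.4]{St} together with the finite presentability of $A_\Gcal$, as the paper does.

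There is a second, smaller slip in the same step: from $\bar y_i=0$ in $Q:=\ell(\Omega(I))/\ell(F_I)$ you conclude $y_i\in\ell(F_I)$, but this does not follow, because $Q$ is the $\Gcal$-reflection of the $A$-module quotient $\Omega(I)_\Gcal/(F_I)_\Gcal$ rather than that quotient itself, and the reflection map has $\Gcal$-torsion kernel. What does follow from $\bar y_i=0$ is that the composite $\Omega(I)_\Gcal\to Q$ kills a subobject which is an epimorphic image onto $\Omega(I)_\Gcal$ in $\Xcal_\Gcal$ modulo $\Gcal$-torsion, and since $Q\in\Fcal_\Gcal$, the whole map $\Omega(I)_\Gcal\to Q$ is zero; as this map is an epimorphism in $\Xcal_\Gcal$, one gets $Q=0$ and hence $\Omega(I)/F_I\in\Tcal_\Gcal$. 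Also note that ``finitely generated in $\Xcal_\Gcal$'' only yields elements $y_1,\dots,y_k$ generating $\ell(\Omega(I))$ modulo $\Gcal$-torsion, not an actual finite $A_\Gcal$-generating set; the argument just sketched handles exactly this weaker conclusion. The paper avoids all of this by choosing $F_I$ directly as a large enough term of a direct-union presentation $\Omega(I)=\varinjlim F_j$ using finite generation of $\ell(\Omega(I))$ in $\Xcal_\Gcal$.
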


\begin{proof}
Recall the functor $i\cong\Hom{\Xcal_\Gcal}{A_\Gcal}{-}\colon\mathcal X_\Gcal\hookrightarrow\lmod{A}$ and note that the characterization of $M_\Gcal$ through the exact sequence~\eqref{eq:reflect} shows that (1) holds if and only if $i$ commutes with direct limits if and only if $A_\Gcal$ is a finitely presentable object of $\Xcal_\Gcal$. Moreover, since the $\Gcal$-torsion part $t(M)$ of $M$ can be computed as $\Ker\psi_M$ in~\eqref{eq:reflect}, (1) implies that $t\colon \AMod\longrightarrow\AMod$ commutes with direct limits. The latter is equivalent to $\Fcal_\Gcal$ being closed under direct limits, or equivalently to the finite type of $\Gcal$.

Suppose now that (1) holds, let $I\in\Gcal$ be a finitely generated left ideal, and consider an exact sequence
\[
0 \longrightarrow \Omega(I) \longrightarrow A^n \longrightarrow A \longrightarrow A/I \longrightarrow 0
\]
in $\AMod$. If we apply the exact functor $\ell\colon \AMod \longrightarrow \Xcal_\Gcal$ left adjoint to $i$, we obtain a sequence
\[ 0 \longrightarrow \Omega(I)_\Gcal \longrightarrow A_\Gcal^n \longrightarrow A_\Gcal \longrightarrow 0 \]
 which is exact in the abelian category $\Xcal_\Gcal$ (although not necessarily in $\AMod$). Since $A_\Gcal$ is finitely presentable in $\Xcal_\Gcal$, \cite[Proposition V.3.4]{St} implies that $\Omega(I)_\Gcal$ is a finitely generated object of $\Xcal_\Gcal$. In particular, if we express $\Omega(I) = \varinjlim_j F_j$ as the direct union of its finitely generated submodules, then $\Omega(I)_\Gcal$ coincides with the direct union $\varinjlim_j (F_j)_\Gcal$ in the category $\Xcal_\Gcal$ and therefore $(F_{j_0})_\Gcal \cong \Omega(I)_\Gcal$ for some index $j_0$. As this is essentially the image of the inclusion $F_{j_0}\subseteq \Omega(I)$ under the functor $\ell$, we have $\ell(\Omega(I)/F_{j_0}) = 0$ or, in other words, $\Omega(I)/F_{j_0}$ lies in $\Tcal_\Gcal$. Hence, we can put $F_I = F_i$ to obtain an exact sequence for $\Omega(I)$ as in (2).

Suppose conversely that (2) holds. In this case, we can use a straightforward variation of the proof of \cite[Corollary 3.10]{HSt}. By assumption, for each finitely generated $I\in\Gcal$ there is a complex
$P_I=((P_I)_2 \longrightarrow (P_I)_1 \longrightarrow A)$
of finitely generated projective modules, with $A$ in homological degree zero, such that $H_0(P_I)=A/I$ and with $H_1(P_I)$ in $\Tcal_\Gcal$.
For any $M$ in $\AMod$, it is clear that $M$ lies in $\Fcal_\Gcal$ if and only if $H^0\Hom{A}{P_I}{M}=0$. 
In addition, note that if $M$ is $\Gcal$-torsion-free, then the morphism $\Hom{A}{\Omega(I)}{M}\longrightarrow\Hom{A}{F_I}{M}$ is injective and hence $\Ext{1}{A}{A/I}{M}\cong H^1\Hom{A}{P_I}{M}$. It then follows that
\[ \Xcal_\Gcal=\{M\in \AMod\mid H^0\Hom{A}{P_I}{M}=0=H^1\Hom{A}{P_I}{M} \text{ for\ all } I\in\Gcal\}, \]
which immediately implies (1).
\end{proof}

\begin{corollary} \label{cor:Giraud finite}
For any ring $A$, we have inclusions
\[
\left\{
\begin{matrix}
\textrm{Perfect Gabriel}\\
\textrm{filters on $A$}
\end{matrix}
\right\}
\subseteq
\left\{
\begin{matrix}
\textrm{Giraud-finite Gabriel}\\
\textrm{filters on $A$}
\end{matrix}
\right\}
\subseteq
\left\{
\begin{matrix}
\textrm{Finite-type Gabriel}\\
\textrm{filters on A}
\end{matrix}
\right\}
\]
and, moreover, the last inclusion is an equality whenever $A$ is left coherent or $A$ is commutative.
\end{corollary}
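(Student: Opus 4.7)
The plan is to dispatch the two inclusions first and then treat the equality statement. For the first inclusion, a perfect Gabriel filter $\Gcal$ has $\Xcal_\Gcal$ closed under arbitrary colimits by Definition~\ref{def:perfect}, hence in particular under direct limits, so $\Gcal$ is Giraud-finite. The second inclusion is essentially already contained in Proposition~\ref{prop:Giraud finite}: if $\Xcal_\Gcal$ is closed under direct limits, then the inclusion $i\colon\Xcal_\Gcal\hookrightarrow\AMod$ commutes with them, and from~\eqref{eq:reflect} one reads off that the torsion radical $t=\Ker\psi$ preserves direct limits, which is equivalent to $\Fcal_\Gcal$ being closed under direct limits, i.~e.\ $\Gcal$ being of finite type.

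For the equality in the coherent or commutative case, the strategy is to verify condition~(2) of Proposition~\ref{prop:Giraud finite}: given a finite-type $\Gcal$ and a finitely generated left ideal $I\in\Gcal$, we must exhibit a syzygy $\Omega(I)$ fitting into a short exact sequence $0\la F_I\la \Omega(I)\la T_I\la 0$ with $F_I$ finitely generated and $T_I\in\Tcal_\Gcal$. In the left coherent case this is immediate: $I$ is then even finitely presented, so some syzygy $\Omega(I)$ is itself finitely generated and one may take $F_I=\Omega(I)$ and $T_I=0$.

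For the commutative case the plan is to exploit the Koszul relations. Writing $I=(a_1,\dots,a_n)$, let $\Omega(I)\subseteq A^n$ be the kernel of the surjection sending $e_i\mapsto a_i$, and let $F_I\subseteq\Omega(I)$ be the finitely generated submodule generated by the Koszul elements $a_ie_j-a_je_i$ for $1\le i<j\le n$ (these lie in $\Omega(I)$ precisely by commutativity). The core computation is that $I\cdot\Omega(I)\subseteq F_I$: modulo $F_I$ we have $a_ke_i\equiv a_ie_k$ for all $i,k$, so for any $(r_1,\dots,r_n)\in\Omega(I)$ and any $k$,
\[
a_k(r_1,\dots,r_n) \;=\; \sum_i r_i\,a_k e_i \;\equiv\; \sum_i r_i\,a_i e_k \;=\; \Bigl(\sum_i r_i a_i\Bigr)e_k \;=\; 0 \pmod{F_I}.
\]
Hence $\Omega(I)/F_I$ is annihilated by $I\in\Gcal$ and therefore lies in $\Tcal_\Gcal$, so Proposition~\ref{prop:Giraud finite} yields that $\Gcal$ is Giraud-finite. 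The only subtle point is identifying a suitable finitely generated $F_I$ in the commutative case; the Koszul relations are the natural candidate, and the verification uses commutativity essentially in order to swap the roles of the coordinates $e_i$ and $e_k$.
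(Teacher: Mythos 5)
Your proof is correct and follows essentially the same route as the paper. In particular, your finitely generated submodule $F_I$ generated by the Koszul relations $a_ie_j-a_je_i$ is exactly $B_1(K_\bullet(\mathbf{a}))$ of the Koszul complex that the paper uses, and your explicit calculation that $I\cdot\Omega(I)\subseteq F_I$ is just a direct verification of the standard fact (which the paper cites) that $I$ annihilates the Koszul homology.
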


\begin{proof}
If $A$ is left coherent, then each finitely generated left ideal $I$ is finitely presented, and we can choose $F_I=\Omega(I)$, proving that any finite type Gabriel filter is Giraud-finite. If $A$ is commutative, we can consider the Koszul complex
\[
K_\bullet(\mathbf{x}) =
\bigotimes_{i=1}^n(A \overset{x_i}\la A) =
\cdots \la A^{\genfrac(){0pt}{3}{n}{2}} \la A^n \stackrel{\mathbf{x}}\la A \la 0.
\]
for any finite sequence $\mathbf{x}=(x_1,x_2,\dots,x_n)$ of generators of $I$. It is well-known that the homology of $K_\bullet(\mathbf{x})$ is annihilated by $I$ (see~\cite[Theorem 16.4]{M}), so we can take $\Omega(I)=\Ker(A^n \overset{\mathbf{x}}\longrightarrow A) = Z_1(K_\bullet(\mathbf{x}))$ and $F_I=B_1(K_\bullet(\mathbf{x}))$. This allows us to show that any finite type Gabriel filter is Giraud-finite. 
\end{proof}

\subsection{Gabriel filters and silting classes}
For a Gabriel filter $\Gcal$, we will, in addition to the hereditary torsion pair $(\Tcal_\Gcal,\Fcal_\Gcal)$ and the Giraud subcategory $\Xcal_\Gcal$ in $\AMod$, also consider the class of $\mathcal{G}$-\emph{divisible} modules 
\begin{align*}
\Dcal_\Gcal&=\{M \in \ModA \mid  MI=M\text{ for all } I\in\Gcal\} \\
&=\{M \in \ModA \mid  M\otimes_A{A/I}=0\text{ for all } I\in\Gcal\}.
\end{align*}

\begin{remark}\label{rem:divisible}
A left Gabriel filter $\mathcal{G}$ is perfect if and only if $A_\Gcal$ is a $\Gcal$-divisible (right) $A$-module, cf.~\cite[Proposition XI.3.4]{St}. 
\end{remark}

It turns out that these divisibility classes are very closely related to silting classes.

\begin{proposition}\label{divisible silting}
If $A$ is a ring and $\Gcal$ is a left Gabriel filter of finite type, then the class $\Dcal_\Gcal$ is a silting class. Moreover, if $A$ is commutative, every silting class in $\ModA$ has this form.
\end{proposition}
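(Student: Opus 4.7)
I plan to prove the proposition in two parts, mediated by Theorem~\ref{thm:finite type silting}'s characterization of silting classes as $\Dcal_\Sigma$ for $\Sigma \subseteq \projA$.

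For the forward direction, I will exhibit an explicit $\Sigma \subseteq \projA$ with $\Dcal_\Gcal = \Dcal_\Sigma$. For each finitely generated left ideal $I \in \Gcal$ with a chosen set of generators $x_1, \dots, x_n$, define the morphism $\sigma_I \colon A \to A^n$ of right $A$-modules by $\sigma_I(a) = (x_1 a, \dots, x_n a)$. Under the canonical identifications $\Hom{A}{A}{M} \cong M$ and $\Hom{A}{A^n}{M} \cong M^n$, a direct computation identifies $\Hom{A}{\sigma_I}{M}$ with the map $M^n \to M$, $(m_j) \mapsto \sum_j m_j x_j$, whose surjectivity is precisely the divisibility condition $MI = M$. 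Hence $\Dcal_{\sigma_I} = \{M \in \ModA \mid MI = M\}$. Since $\Gcal$ is of finite type, it admits a filter basis $\Bcal$ of finitely generated ideals, and for every $I \in \Gcal$ there exists $J \in \Bcal$ with $J \subseteq I$, which forces $MI = M$ whenever $MJ = M$. Therefore $\Dcal_\Gcal = \Dcal_\Sigma$ for $\Sigma = \{\sigma_J \mid J \in \Bcal\}$. Since $\Gcal$-divisibility is manifestly preserved by direct sums, $\Dcal_\Gcal$ is closed under coproducts, and Theorem~\ref{thm:finite type silting} concludes.

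For the converse, now assuming $A$ commutative, given a silting class $\Tcal$ in $\ModA$ I aim to produce a Gabriel filter $\Gcal$ of finite type with $\Tcal = \Dcal_\Gcal$ via the Angeleri--H\"ugel--Hrbek classification from~\cite{AH}, which matches silting classes in $\ModA$ with Thomason subsets $V \subseteq \Spec A$. Each Thomason subset $V$ yields the Gabriel filter $\Gcal_V = \{I \leq A \mid V(I) \subseteq V\}$, which is automatically of finite type (Corollary~\ref{cor:Giraud finite}, together with the fact that $V$ is the union of closed sets $V(I)$ with $I$ finitely generated). Letting $V$ be the Thomason subset matching $\Tcal$, the AH description of the associated silting class as $\{M \mid M \otimes_A A/I = 0 \text{ for all finitely generated } I \text{ with } V(I) \subseteq V\}$ coincides on the nose with $\Dcal_{\Gcal_V}$, yielding $\Tcal = \Dcal_{\Gcal_V}$.

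The main obstacle lies in ensuring compatibility of the two bijections: the Thomason subset extracted from $\Tcal$ via~\cite{AH} must match the one whose Gabriel filter reproduces $\Tcal$ via the forward direction. Without invoking AH directly, one could instead set $\Gcal_\Tcal = \{I \leq A \mid MI = M \text{ for all } M \in \Tcal\}$, verify the Gabriel axioms (commutativity being essential for the transporter axiom $(I{:}x) \in \Gcal_\Tcal$) and the finite-type property (using that silting torsion-free classes are closed under direct limits), and then prove $\Dcal_{\Gcal_\Tcal} \subseteq \Tcal$ by a local-to-global argument over $\Spec A$; this last inclusion is by far the most delicate step, and the one where commutativity is indispensable, as a module that is divisible by all left ideals in a filter need not lie in a silting class over a general ring.
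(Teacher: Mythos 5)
Your forward direction is correct and is in fact more explicit than the paper's proof, which simply cites \cite[Corollaries~3.6 and~4.1]{AH}. You construct, from a filter basis $\Bcal$ of finitely generated left ideals, the family $\Sigma=\{\sigma_J\dd A\to A^n\mid J\in\Bcal\}$ of morphisms of right modules in $\projA$; the computation identifying $\Hom{A}{\sigma_J}{M}$ with $M^n\to M$, $(m_j)\mapsto\sum m_jx_j$, and hence $\Dcal_{\sigma_J}=\{M\mid MJ=M\}$, is valid for arbitrary (possibly non-commutative) $A$. Since $\Sigma\subseteq\projA$, the class $\Dcal_\Sigma$ is automatically a torsion class (it is closed under coproducts because the $\sigma_J$ have finitely generated domain and codomain), and $\Dcal_\Sigma=\Dcal_\Gcal$ because $\Bcal$ is a filter basis. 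Theorem~\ref{thm:finite type silting} then applies. This is a clean, self-contained argument modulo that theorem.

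The converse, however, has a genuine gap. You invoke the Angeleri-H\"ugel--Hrbek correspondence to obtain a Thomason subset $V$ with $\Tcal$ the associated silting class, and then claim that $\Gcal_V=\{I\le A\mid V(I)\subseteq V\}$ is of finite type, citing Corollary~\ref{cor:Giraud finite}. But that corollary only says that for commutative $A$ the classes of finite-type and Giraud-finite Gabriel filters coincide; it does not show that $\Gcal_V$ falls into either class. What is actually needed is the assertion that whenever $V(I)\subseteq V=\bigcup_\alpha V(J_\alpha)$ with each $J_\alpha$ finitely generated, the ideal $I$ contains some finite product $J_{\alpha_1}\cdots J_{\alpha_k}$; this requires a Zorn's-lemma prime avoidance argument (choose an ideal maximal among those containing $I$ but containing no such product, and show it is prime, hence lies in $V(I)\setminus V$, a contradiction). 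That fact is precisely part of what \cite[Propositions 2.7, 2.12]{HSt} and \cite[Theorem~4.7]{AH} establish, and the paper's proof defers to \cite[Theorem~4.7]{AH} for exactly this reason. Your proposed alternative route via $\Gcal_\Tcal=\{I\mid MI=M\ \forall M\in\Tcal\}$ and $\Dcal_{\Gcal_\Tcal}\subseteq\Tcal$ points in a sensible direction, but as you acknowledge yourself it is only a sketch and the key inclusion is not established. In short: the forward direction is complete and an improvement; the converse either needs the Zorn argument spelled out or should simply cite \cite[Theorem~4.7]{AH} (equivalently Theorem~\ref{thm:Gabrielcorr}) rather than Corollary~\ref{cor:Giraud finite}, which does not do the job.
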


\begin{proof}
The first part follows from~\cite[Corollaries~3.6 and~4.1]{AH}, and the case for commutative rings from~\cite[Theorem~4.7]{AH}.
\end{proof}

We are now ready to tackle the problem of which flat ring epimorphisms come from silting modules. We first start with a special case and then reduce the general situation to it.

\begin{proposition} \label{prop:cnt based is silting}
Let $\Gcal$ be a perfect Gabriel filter of left ideals of a ring $A$ with a countable filter basis. Then $\lambda\colon A \longrightarrow A_\Gcal$ is a right silting ring epimorphism with the corresponding silting class $\Gen A_\Gcal = \Dcal_\Gcal$.
\end{proposition}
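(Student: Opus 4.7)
The strategy is to show that $A_\Gcal$ has projective dimension at most one as a right $A$-module; then Example~\ref{expl:silting pd1} immediately yields the conclusion. Since $\Gcal$ is perfect, Theorem~\ref{thm:perfectGT and ringepi} ensures that $\lambda$ is flat, so $A_\Gcal$ is a flat right $A$-module. By a classical theorem going back to Lazard and Raynaud--Gruson, a flat module that is countably presented over $A$ has projective dimension at most one. Thus it suffices to prove that $A_\Gcal$ is countably presented as an $A$-module.

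To establish countable presentation I would proceed as follows. Combining the countable basis hypothesis with Corollary~\ref{cor:Giraud finite} (perfection implies Giraud-finiteness, hence finite type), I may fix a countable filter basis $\{I_n\}_{n\in\mathbb{N}}$ of $\Gcal$ consisting of finitely generated left ideals, with generators $x_{n,1},\dots,x_{n,k_n}$ of $I_n$. The explicit description of the Giraud reflection presents $A_\Gcal$ as the countable directed colimit $\varinjlim_n \Hom_A(I_n, A/t(A))$. Using Giraud-finiteness, which provides for each $n$ a finitely generated submodule $F_n\subseteq\Omega(I_n)$ with quotient in $\Tcal_\Gcal$, and the fact that any map to the $\Gcal$-torsion-free module $A/t(A)$ annihilates $\Gcal$-torsion, each $\Hom_A(I_n, A/t(A))$ is cut out from $(A/t(A))^{k_n}$ by finitely many $A$-linear relations coming from the generators of $F_n$. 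One then argues that modulo a countable $\Gcal$-torsion adjustment, the whole colimit can be exhibited as a countable colimit of finitely presented $A$-modules, yielding countable presentation of $A_\Gcal$.

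Once $\mathrm{pd}_A A_\Gcal \leq 1$ is established, Example~\ref{expl:silting pd1} shows that $\lambda$ is a silting ring epimorphism with silting torsion class $\Gen A_\Gcal$. The remaining identification $\Gen A_\Gcal = \Dcal_\Gcal$ is routine: the inclusion $\Gen A_\Gcal\subseteq\Dcal_\Gcal$ is clear because $A_\Gcal\in\Dcal_\Gcal$ (by Remark~\ref{rem:divisible}) and $\Dcal_\Gcal$ is closed under coproducts and quotients; for the reverse, any $M\in\Dcal_\Gcal$ is $\Gcal$-divisible, so for each $m\in M$ the map $A\to M$, $a\mapsto ma$, extends (not necessarily uniquely) along $\lambda$ to a homomorphism $A_\Gcal\to M$, and summing over $m\in M$ yields an epimorphism $A_\Gcal^{(M)}\twoheadrightarrow M$. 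The main obstacle in this plan is precisely the countable-presentation step: while the colimit description $A_\Gcal\cong\varinjlim_n \Hom_A(I_n,A/t(A))$ is automatic, the individual terms need not be countably presented over the (possibly non-coherent) ring $A$, so a delicate use of Giraud-finiteness---working modulo $\Gcal$-torsion at each stage---is needed to control the sizes.
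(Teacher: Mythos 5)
Your overall plan matches the paper's at the top level: establish that $A_\Gcal$ has projective dimension at most one as a right $A$-module, then invoke Example~\ref{expl:silting pd1} to conclude that $\lambda$ is silting with silting class $\Gen A_\Gcal$, and finally identify $\Gen A_\Gcal$ with $\Dcal_\Gcal$. The two routes diverge, however, exactly at the two steps that carry the weight of the argument, and both of your substitutes have gaps.

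For the projective dimension step, the paper simply cites Positselski's theorem (\cite[Theorem~8.5]{Pos}), which is a dedicated result for perfect Gabriel filters of countable type and whose proof does not pass through countable presentation of $A_\Gcal$. You instead propose to show that $A_\Gcal$ is \emph{countably presented} over $A$ and then use the classical Jensen/Raynaud--Gruson bound for flat countably presented modules. This would indeed give projective dimension at most one if it worked, but the countable presentation is never established, and you acknowledge this yourself. The difficulty is real: the Giraud reflection $A_\Gcal \cong \varinjlim_n \Hom{A}{I_n}{A/t(A)}$ exhibits $A_\Gcal$ as a countable direct limit, but the terms $\Hom{A}{I_n}{A/t(A)}$ are not controlled in size over a non-coherent $A$, and finite presentability of $A_\Gcal$ as an object of the quotient category $\Xcal_\Gcal$ (which is what Giraud-finiteness gives you) is a much weaker statement than countable presentability of $A_\Gcal$ as an $A$-module. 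It is not at all clear that this intermediate claim is even true; the cited theorem of Positselski certainly does not assert it.

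For the identification $\Gen A_\Gcal = \Dcal_\Gcal$, the inclusion $\Gen A_\Gcal \subseteq \Dcal_\Gcal$ is indeed easy (as you say). But your argument for the reverse inclusion asserts that for $M \in \Dcal_\Gcal$ the map $a \mapsto ma$ extends along $\lambda\colon A \to A_\Gcal$, and this does not follow from $\Gcal$-divisibility of $M$. The surjectivity of $\Hom{A}{\lambda}{M}\colon \Hom{A}{A_\Gcal}{M} \to M$ is governed by an $\Ext$-group ($\Hom{\DerA}{C_\lambda}{M[1]}$), whose vanishing is essentially the statement you are trying to prove, so as written this step is circular. The paper avoids this entirely by dualizing: it reduces $\Gen A_\Gcal = \Dcal_\Gcal$ to the statement $\Cogen A_\Gcal^+ = \Fcal_\Gcal$ on the left side, where one inclusion is immediate from $A_\Gcal^+ \in \Xcal_\Gcal$ and the other follows from the injectivity of $X \to A_\Gcal \otimes_A X$ for $\Gcal$-torsion-free $X$ (\cite[Proposition~XI.3.4(f)]{St}). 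You would need to supply an argument of comparable substance here; the extension claim as stated is a gap.
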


\begin{proof}
The projective dimension of $A_\Gcal$ as a right $A$-module is at most one by \cite[Theorem~8.5]{Pos} and, hence, $\lambda$ is a silting ring epimorphisms and $\Coker\lambda$ is a corresponding partial silting module whose silting class is $\Gen A_\Gcal$ by Example~\ref{expl:silting pd1}.
It remains to prove that $\Gen A_\Gcal=\Dcal_\Gcal$ or, equivalently, by~\cite[Proposition~3.4]{AH}, that $\Cogen A_\Gcal^+=\Dcal_\Gcal^+$ (recall~\eqref{eq:dual definable}). However, $X\in\Dcal_\Gcal$ if and only if $0=(X\otimes_AA/I)^+\cong\Hom{A}{A/I}{X^+}$ for all $I\in\Gcal$ if and only if $X^+\in\Fcal_\Gcal$, so $\Dcal_\Gcal^+=\Fcal_\Gcal$ and we should show that $\Cogen A_\Gcal^+=\Fcal_\Gcal$ in $\AMod$. This is standard---on the one hand, $A_\Gcal^+\in\Xcal_\Gcal\simeq\lmod{A_\Gcal}$, so $\Cogen A_\Gcal^+\subseteq\Fcal_\Gcal$. On the other hand, $X\to A_\Gcal\otimes_A X$ is injective for each $X\in\Fcal_\Gcal$ by~\cite[Proposition~XI.3.4(f)]{St} and the left $A_\Gcal$-module $A_\Gcal\otimes_AX$ certainly belongs to $\Cogen A_\Gcal^+$, so also $X\in\Cogen A_\Gcal^+$.
\end{proof}

\subsection{From flat to silting}

To obtain a more general result on when a flat ring epimorphism is silting, we first record the following lemma, which generalizes~\cite[Lemma 3.8]{Pos} and is closely related to \cite[Lemma 3.6 and Remark 3.7]{Pos}.

\begin{lemma}\label{lem:direct limit of reflections}
Let $\Gcal = \bigcup_{j \in J}\Gcal_j$ be a direct union of Giraud-finite Gabriel filters $\Gcal_j$. Then $\Gcal$ itself is a Giraud-finite Gabriel filter, and for every $M$ in $\AMod$, we have a canonical isomorphism $M_\Gcal\cong \varinjlim_{j\in J} M_{\Gcal_j}$.
\end{lemma}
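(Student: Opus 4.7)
The plan is to first verify the structural properties (that $\Gcal$ is a Gabriel filter and Giraud-finite), and then exhibit $\varinjlim_{j\in J} M_{\Gcal_j}$ as satisfying the characterizing exact sequence~\eqref{eq:reflect} of the $\Xcal_\Gcal$-reflection. Throughout I would exploit the key order-reversing feature: because $\Gcal\supseteq\Gcal_j$, one has $\Tcal_\Gcal\supseteq\Tcal_{\Gcal_j}$, and dually $\Xcal_\Gcal = \bigcap_{j\in J}\Xcal_{\Gcal_j}$ (both directly from the definitions involving $\Hom$ and $\Ext^1$ against $A/I$).

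First I would check that the directed union of Gabriel filters is a Gabriel filter (axioms (i) and (ii) are preserved because any finite family of elements of $\Gcal$ already lies in some single $\Gcal_j$, which is cofinal). Finite type then follows by combining the finitely generated bases of the $\Gcal_j$'s (available since Giraud-finite implies finite type by Proposition~\ref{prop:Giraud finite}). To prove that $\Gcal$ is Giraud-finite, I would use the criterion of Proposition~\ref{prop:Giraud finite}(2): given a finitely generated $I\in\Gcal$, choose $j$ with $I\in\Gcal_j$ and transport the witnessing short exact sequence $0\to F_I\to\Omega(I)\to T_I\to 0$ from $\Gcal_j$, observing that $T_I\in\Tcal_{\Gcal_j}\subseteq\Tcal_\Gcal$.

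For the colimit formula, I would first verify that $(M_{\Gcal_j})_{j\in J}$ is a direct system under $M$. If $j'\le j''$, then $M_{\Gcal_{j''}}\in\Xcal_{\Gcal_{j''}}\subseteq\Xcal_{\Gcal_{j'}}$ (again using the order reversal), so the universal property of $\psi_M^{\Gcal_{j'}}\colon M\to M_{\Gcal_{j'}}$ yields a canonical transition map compatible with the maps from $M$. Next I would show $\varinjlim M_{\Gcal_j}\in\Xcal_\Gcal$: fix $j_0\in J$ and use cofinality of $\{j\ge j_0\}$ to rewrite the colimit as $\varinjlim_{j\ge j_0} M_{\Gcal_j}$; each term lies in $\Xcal_{\Gcal_{j_0}}$, which is closed under direct limits by the Giraud-finiteness of $\Gcal_{j_0}$, so the colimit lies in $\Xcal_{\Gcal_{j_0}}$. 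Since this holds for every $j_0$, the colimit lies in $\Xcal_\Gcal$.

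The final step is to take the direct limit of the 4-term exact sequences
\[ 0\la t_{\Gcal_j}(M)\la M\la M_{\Gcal_j}\la \Coker\psi_M^{\Gcal_j}\la 0 \]
coming from~\eqref{eq:reflect}. Exactness of filtered colimits in $\AMod$ gives an exact sequence
\[ 0\la \varinjlim t_{\Gcal_j}(M)\la M\la \varinjlim M_{\Gcal_j}\la \varinjlim \Coker\psi_M^{\Gcal_j}\la 0, \]
whose outer terms lie in $\Tcal_\Gcal$ (as $\Tcal_\Gcal$ contains each $\Tcal_{\Gcal_j}$ and is closed under direct limits) and whose middle term $\varinjlim M_{\Gcal_j}$ has just been shown to lie in $\Xcal_\Gcal$. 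By the universality statement recalled right after~\eqref{eq:reflect}, any such sequence exhibits its middle map as the $\Xcal_\Gcal$-reflection, giving the canonical isomorphism $M_\Gcal\cong \varinjlim_{j\in J}M_{\Gcal_j}$.

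The only slightly delicate point is the verification that $\varinjlim M_{\Gcal_j}$ lies in $\Xcal_\Gcal$; the cofinality trick sidesteps the need for the stronger (and false in general) claim that the inclusion $\Xcal_\Gcal\hookrightarrow\AMod$ commutes with the colimit, reducing everything to the Giraud-finiteness of each individual $\Gcal_j$.
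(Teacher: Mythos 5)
Your plan matches the paper's proof in its essential strategy: take the direct limit of the four-term reflection sequences~\eqref{eq:reflect} for the $\Gcal_j$, observe that the outer terms of the limiting exact sequence lie in $\Tcal_\Gcal$, and show $\varinjlim M_{\Gcal_j}\in\Xcal_\Gcal$ by the cofinality trick (for each $j_0$, all terms $M_{\Gcal_j}$ with $j\ge j_0$ lie in $\Xcal_{\Gcal_{j_0}}$, which is closed under direct limits), then invoke the universality of the reflection. So you have the right idea.

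Two smaller points are worth flagging. First, your verification that $\Gcal$ is a Gabriel filter is too quick: it is true that any finite family of members of $\Gcal$ sits inside a single $\Gcal_j$, and this settles axiom~(i), but axiom~(ii) quantifies over \emph{all} $x$ in some $I\in\Gcal$, which is a priori an infinite family of conditions $(J:x)\in\Gcal$. To reduce to finitely many, you must use the finite-type hypothesis (which holds since Giraud-finite implies finite type by Proposition~\ref{prop:Giraud finite}): replace $I$ by a finitely generated $I'\subseteq I$ in the relevant $\Gcal_j$, pick $j'$ large enough that $I'$ and the finitely many $(J:x_i)$ for generators $x_i$ of $I'$ all lie in $\Gcal_{j'}$, and then observe that $(J:x)\supseteq\bigcap_i\big((J:x_i):a_i\big)\in\Gcal_{j'}$ for any $x=\sum a_i x_i\in I'$, using axiom~(i) and closure under finite intersections. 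The paper side-steps all of this by citing \cite[Lemma 3.1]{Pos}, but a self-contained argument must address it. Second, your route to Giraud-finiteness via Proposition~\ref{prop:Giraud finite}(2) is correct but roundabout; the paper simply observes that $\Xcal_\Gcal=\bigcap_{j\in J}\Xcal_{\Gcal_j}$ and an intersection of full subcategories each closed under direct limits is again closed under direct limits. That shortcut is cleaner and also gives you for free the key containment needed later.
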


\begin{proof}
That $\Gcal$ is a Gabriel filter follows from~\cite[Lemma 3.1]{Pos} and it is Giraud-finite since $\Xcal_\Gcal = \bigcap_{j\in J}\Xcal_{\Gcal_j}$.
Given any $M$, we have a direct system of exact sequences
\[ 0\longrightarrow t_j(M)\longrightarrow M\stackrel{(\psi_j)_M}{\longrightarrow} M_{\Gcal_j}\longrightarrow \Coker(\psi_j)_M\longrightarrow 0 \]
indexed by $j\in J$ such that $t_j(M)$ and $\Coker(\psi_j)_M$ lie in $\Tcal_{\Gcal_j} (\subseteq \Tcal_\Gcal)$ and $M_{\Gcal_j}$ lies in $\Xcal_{\Gcal_j}$. Since $M_{\Gcal_k}$ belongs to $\Xcal_{\Gcal_k}\subseteq\Xcal_{\Gcal_j}$ for each $k\ge j$ in $J$ and all $\Xcal_{\Gcal_j}$ are closed under direct limits, we find that $\varinjlim M_{\Gcal_j}$ lies in $\bigcap_{j\in J}\Xcal_{\Gcal_j} = \Xcal_\Gcal$. Thus, as the direct limit map $\varinjlim (\psi_j)_M\colon M \la\varinjlim M_{\Gcal_j}$ has the kernel and cokernel in $\Tcal_\Gcal$ and the codomain in $\Xcal_\Gcal$, it is an $\Xcal_\Gcal$-reflection and as such it is canonically isomorphic to $\psi_M\colon M \longrightarrow M_\Gcal$
by the discussion below~\eqref{eq:reflect}.
\end{proof}

Now we can state the main result of the section, which relates flat epimorphisms to silting ones, and its immediate corollary for commutative rings.

\begin{theorem}\label{thm:from flat to silting}
Let $\lambda\colon A\longrightarrow B$ be a right flat ring epimorphism and $\Gcal = \{ I\subseteq A\mid B\cdot\lambda(I) = B \}$ be the corresponding left Gabriel filter on $A$. Suppose that the following conditions hold:
\begin{enumerate}
\item For each $I\in\Gcal$ there exists a subfilter $\Gcal_I\subseteq\Gcal$ of left ideals with a countable filter basis such that $(I:x)=\{a\in A\mid ax\in I\}\in\Gcal_I$ for each $x\in A$.
\item Each countably based Gabriel subfilter of finite type $\Gcal'\subseteq\Gcal$ is Giraud-finite.
\end{enumerate}
Then $\lambda$ is a right silting ring epimorphism.
\end{theorem}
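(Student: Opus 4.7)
The strategy is to reduce to the countably based case of Proposition~\ref{prop:cnt based is silting} by writing $\Gcal$ as a directed union of manageable subfilters and assembling a global set $\Sigma$ of morphisms in $\projA$. Concretely, using condition~(1), I would decompose $\Gcal = \bigcup_{j \in J} \Gcal_j$ as a directed union of countably based Gabriel subfilters of $\Gcal$ of finite type. Starting from any countable subset of $\Gcal$, one iteratively adjoins the countable families $\Gcal_I$ supplied by~(1) (securing Gabriel axiom~(i)), the countable data forced by axiom~(ii), and finite intersections, and finally restricts to finitely generated generators. By condition~(2), each $\Gcal_j$ is then Giraud-finite, and Lemma~\ref{lem:direct limit of reflections} yields $A_\Gcal = \varinjlim_j A_{\Gcal_j}$.

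Next, each $\Gcal_j$ being of finite type, Proposition~\ref{divisible silting} gives that $\Dcal_{\Gcal_j}$ is a silting class in $\ModA$, and Theorem~\ref{thm:finite type silting} provides a set $\Sigma_j \subseteq \MorpA$ with $\Dcal_{\Sigma_j} = \Dcal_{\Gcal_j}$. Setting $\Sigma = \bigcup_j \Sigma_j$, one obtains $\Dcal_\Sigma = \bigcap_j \Dcal_{\Gcal_j} = \Dcal_\Gcal = \Gen B$; the last equality uses only the perfection of $\Gcal$ (the computation $\Cogen B^+ = \Fcal_\Gcal$ at the end of the proof of Proposition~\ref{prop:cnt based is silting} does not require a countable basis). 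Since $\Gen B$ is closed under coproducts, Proposition~\ref{prop:loc at proj silting} yields a silting ring epimorphism $\lambda_\Sigma\colon A \to B_\Sigma$ that is universal for the property that $\sigma \otimes_A -$ becomes invertible for every $\sigma \in \Sigma$.

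To identify $\lambda_\Sigma$ with $\lambda$ up to epiclass, one uses $B = \varinjlim_{k \geq j} A_{\Gcal_k}$ together with the construction of each $\sigma_j$ to verify that $\sigma_j \otimes_A B$ is an isomorphism for every $j$, appealing to the flatness of $B$ to propagate the invertibility of $\sigma_j \otimes_A A_{\Gcal_j}$ through the direct limit. Universality of $\lambda_\Sigma$ then supplies a ring map $\phi\colon B_\Sigma \to B$ with $\lambda = \phi \circ \lambda_\Sigma$; to conclude that $\phi$ is an isomorphism one would show $\Xcal_\Sigma = \Xcal_B$ as bireflective subcategories of $\ModA$, which by Theorem~\ref{thm:epicl} amounts to showing that $B_\Sigma$ is itself a $B$-module.

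The main obstacle is this last identification: passing from the divisibility $B_\Sigma \in \Gen B$ to the full $B$-module structure on $B_\Sigma$ requires a careful interplay of the silting and flat pictures that goes beyond the formal Gabriel-filter bijection, likely exploiting the explicit description of $\lambda_\Sigma$ as a universal localization at morphisms in $\projA$ together with the flatness of $B$. The combinatorial decomposition in the first paragraph is also delicate: condition~(1) is tailored to axiom~(i), while axiom~(ii) must be handled by reapplying~(1) to cofinal data at each inductive stage in order to keep each $\Gcal_j$ countable.
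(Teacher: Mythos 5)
You have correctly isolated the genuine gap, and it is indeed fatal to your route. The core issue is that passing from the ring epimorphism to its silting class and back loses information: a set of maps $\Sigma\subseteq\MorpA$ with $\Dcal_\Sigma=\Dcal_\Gcal=\Gen B$ determines the wide bireflective subcategory $\Xcal_\Sigma=\Dcal_\Sigma\cap\{X\mid\Hom{A}{\sigma}{X}\text{ injective for all }\sigma\in\Sigma\}$, but the second condition is a torsion-free class that is \emph{not} determined by $\Dcal_\Sigma$ (different $\Sigma$ with the same divisibility class give different $\Xcal_\Sigma$; cf.\ Remark~\ref{rem:silting proj pres} and the fact that a partial silting module is only a summand of a silting module). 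There is no mechanism in your argument forcing $\Xcal_\Sigma\subseteq\Xcal_B$. Moreover, the auxiliary step you lean on---that $\sigma_j\otimes_A A_{\Gcal_j}$ is invertible so that flatness can propagate it to $B$---is itself unjustified: $\Sigma_j$ was chosen only to satisfy $\Dcal_{\Sigma_j}=\Dcal_{\Gcal_j}$, a $\Hom$-condition on right modules, which says nothing a priori about $\sigma_j\otimes_A-$ on left modules; and since $\Gcal_j$ is only finite type, not perfect, $A_{\Gcal_j}$ is not flat and one cannot even invoke a flat ring epimorphism to $A_{\Gcal_j}$. Finally, note that the statement is for general (possibly noncommutative) $A$, so the commutative bijection in Theorem~\ref{thm:Gabrielcorr}, which would identify silting classes with Gabriel filters, is not available.

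The idea that fills the gap---and which your proposal is missing---is to upgrade the cofinal system from \emph{finite type} to \emph{perfect} countably based Gabriel subfilters. The paper first writes $\Gcal=\bigcup_j\Gcal_j$ as a \emph{countably} directed union of countably based finite-type subfilters (this is where (1) enters, via \cite[Corollary~3.5]{Pos}), and then iterates: fixing a countable basis $(I_n)$ of $\Gcal_{j_0}$, the relations $1_B=\sum b_k x_k$ with $x_k\in I_n$ descend to some $A_{\Gcal_{j_{0,n}}}$ because $B=\varinjlim A_{\Gcal_j}$ by Lemma~\ref{lem:direct limit of reflections}, and countable directedness lets one absorb all the $j_{0,n}$ into a single $j_1\ge j_0$; repeating $\omega$ times produces a countably based $\Gcal'\subseteq\Gcal$ with $A_{\Gcal'}$ $\Gcal'$-divisible, hence perfect by Remark~\ref{rem:divisible}. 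Condition~(2) is used exactly here to guarantee $\Gcal'$ is Giraud-finite so that Lemma~\ref{lem:direct limit of reflections} applies. For each such perfect $\Gcal'$, Proposition~\ref{prop:cnt based is silting} gives a partial silting presentation $\sigma'$ of $\Coker\lambda'$ which, coming directly from the flat epimorphism $\lambda'\colon A\to A_{\Gcal'}$ (of projective dimension $\le 1$, via \cite[Theorem~8.5]{Pos}), satisfies $\Xcal_{\sigma'}=\im\lambda'_*$ on the nose. The whole proof then stays on the $\Xcal$-side: $\im\lambda_*=\bigcap_{\Gcal'}\Xcal_{\sigma'}$ and Proposition~\ref{prop:loc at proj silting} applies, so the identification you were struggling with never arises.
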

\begin{proof}
Using results from~\cite[\S3]{Pos}, we will reduce the problem to one about countably based perfect Gabriel filters. First of all, \cite[Corollary 3.5]{Pos} tells us that assuming (1) we can express $\Gcal$ as a direct union of $\bigcup_{j\in J} \Gcal_j$ of (not necessarily perfect) Gabriel filters $\Gcal_j$ which are of finite type and have countable filter bases. Moreover, this union is countably directed, that is, any countable family of $\Gcal_j$ is contained in $\Gcal_{j'}$ for some $j'$ in $J$.

Secondly, we claim that there is a cofinal subsystem of countably based perfect Gabriel subfilters of $\Gcal$.
This can be shown using assumption (2) and a slight variation of the proof of \cite[Proposition 3.9]{Pos}. To that end, fix $j_0\in J$ and a countable basis $(I_n\mid n<\omega)$ of finitely generated left ideals of $\Gcal_{j_0}$. Given any $n<\omega$, we have $BI_n = B$ by Remark~\ref{rem:divisible} and this is equivalent to the existence of $k_n\ge 0$, $b_1,\dots,b_{k_n}\in B$ and $x_1,\dots,x_{k_n}\in I_n$ such that $1_B=\sum_{j=1}^{k_n}b_jx_j$.
Since $B=\varinjlim_{j\in J}A_{\Gcal_j}$ by Lemma~\ref{lem:direct limit of reflections}, there exist $j_{0,n}\in J$ and elements $a_1,\dots,a_{k_n}\in A_{\Gcal_{j_{0,n}}}$ such that $1_{A_{\Gcal_{j_{0,n}}}}=\sum_{j=1}^{k_n}a_jx_j$, or in other words $A_{\Gcal_{j_{0,n}}} I_n = A_{\Gcal_{j_{0,n}}}$. Now we can choose $j_1\ge j_0$ so that $\Gcal_{j_1}$ contains $\Gcal_{j_{0,n}}$ for each $n<\omega$. To summarize, we have found for any $J_0\in J$ a countably based Gabriel filter $\Gcal_{j_1}\supseteq\Gcal_{j_0}$ that has the property that $A_{\Gcal_{j_1}}$ is $\Gcal_{j_0}$-divisible.
If we repeat the same procedure countably many times, we obtain a chain $j_0 \le j_1 \le j_2 \le \cdots$ such that $A_{\Gcal_{j_{m+1}}}$ is $A_{\Gcal_{j_m}}$-divisible for each $m<\omega$. Let $\Gcal' = \bigcup_{m<\omega}\Gcal_{j_m}$. This is a countably based Gabriel filter and, by Lemma~\ref{lem:direct limit of reflections}, assumption (2) and the construction, $A_{\Gcal'}$ is $\Gcal'$-divisible, hence perfect by Remark~\ref{rem:divisible}. This proves the claim.

Finally, the flat epimorphism $\lambda'\colon A \longrightarrow A_{\Gcal'}$ is silting by Proposition~\ref{prop:cnt based is silting} for each countably based perfect Gabriel subfilter $\Gcal'\subset\Gcal$
(and the corresponding silting map $\sigma'\colon P_1 \longrightarrow P_0$ with $\Xcal_{\sigma'} = \Xcal_{\Gcal'}$ can be taken as a suitable projective presentation of $\Coker\lambda'$). Since $\Xcal_\Gcal = \bigcap_{\Gcal'}\Xcal_{\Gcal'}$, where the intersection runs through all countably based perfect Gabriel subfilters $\Gcal'\subseteq\Gcal$, also $\lambda\colon A \longrightarrow B$ is a silting ring epimorphism by Proposition~\ref{prop:loc at proj silting} (with the corresponding silting class $\Dcal_\Gcal=\bigcap_{\Gcal'}\Dcal_{\Gcal'}$).
\end{proof}

\begin{corollary}\label{cor:commutative flat is silting}
Any flat ring epimorphism originating in a commutative ring is a silting epimorphism.
\end{corollary}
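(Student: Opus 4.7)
The plan is to apply Theorem~\ref{thm:from flat to silting} and verify its two hypotheses for every flat ring epimorphism $\lambda\colon A\longrightarrow B$ with $A$ commutative. Let $\Gcal$ be the perfect Gabriel filter on $A$ associated with $\lambda$ via Theorem~\ref{thm:perfectGT and ringepi}; perfectness forces $\Gcal$ to be of finite type, so it admits a basis of finitely generated ideals. Condition~(2) of Theorem~\ref{thm:from flat to silting} is then immediate from Corollary~\ref{cor:Giraud finite}.

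The substantive part is condition~(1). For $I\in\Gcal$, I would pick a finitely generated $I'\in\Gcal$ with $I'\subseteq I$ and propose
\[
\Gcal_I = \{J\leq A\mid (I')^n\subseteq J\text{ for some }n\geq 1\}
\]
as the required Gabriel subfilter. It is manifestly a filter with the countable basis $\{(I')^n\mid n\geq 1\}$, and a short induction using Gabriel axiom~(ii) inside $\Gcal$ confirms that $(I')^n\in\Gcal$ for each $n$, so $\Gcal_I\subseteq\Gcal$. The containment condition $(I:x)\in\Gcal_I$ is immediate from $(I:x)\supseteq(I':x)\supseteq I'$, where the second inclusion uses commutativity. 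It then remains to check that $\Gcal_I$ itself satisfies the Gabriel axioms. Axiom~(i) is a one-line verification. For axiom~(ii), given $(J:x)\in\Gcal_I$ for every $x$ in some $I''\in\Gcal_I$ with $(I')^m\subseteq I''$, I would apply the hypothesis to the finitely many generators $y_1,\ldots,y_k$ of the ideal $(I')^m$, obtain exponents $n_j$ with $(I')^{n_j}y_j\subseteq J$, set $N=\max_j n_j$, and combine these via $(I')^m=\sum_j y_j A$ and the ideal property of $J$ to conclude $(I')^{N+m}\subseteq J$.

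The main obstacle is the axiom~(ii) verification, where commutativity is essential on two counts: it provides the elementary identity $I'x\subseteq I'$ which would fail for one-sided ideals in a general ring, and it allows the pointwise exponents $n_j$ attached to the generators of $(I')^m$ to be amalgamated into a single uniform exponent $N+m$. The latter amalgamation relies crucially on $I'$ being finitely generated, which is available because the perfect Gabriel filter $\Gcal$ is of finite type.
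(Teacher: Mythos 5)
Your proof is correct, but it does substantially more work than needed, and the extra effort stems from a misreading of the hypothesis you are verifying. Condition~(1) of Theorem~\ref{thm:from flat to silting} asks only for a \emph{subfilter} $\Gcal_I\subseteq\Gcal$ with a countable filter basis containing each $(I:x)$; it does not ask for a Gabriel subfilter. The paper therefore takes $\Gcal_I$ to be the principal filter $\{J\le A\mid J\supseteq I\}$, which has the one-element basis $\{I\}$ and lies inside $\Gcal$ by the upward-closure of $\Gcal$; the key point is the elementary observation that commutativity gives $(I:x)\supseteq I$ for every $x\in A$, so $(I:x)\in\Gcal_I$ immediately. Your construction with $\Gcal_I=\{J\mid(I')^n\subseteq J\text{ for some }n\}$ for a finitely generated $I'\subseteq I$ in $\Gcal$ also works and has the bonus of producing a genuine Gabriel subfilter, but the inductive step $(I')^n\in\Gcal$, the appeal to finite type to get $I'$, and the verification of Gabriel axiom~(ii) for $\Gcal_I$ are all extra machinery that the statement of Theorem~\ref{thm:from flat to silting} does not require. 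Your handling of condition~(2) via Corollary~\ref{cor:Giraud finite} matches the paper exactly.
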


\begin{proof}
We check that a Gabriel filter $\Gcal$ corresponding to a flat epimorphism from a commutative ring $A$ satisfies the assumptions of the theorem. For each $I$ in $\Gcal$, we can take for $\Gcal_I$ in (1) the principal filter induced by $I$ (i.~e. the set of ideals of $A$ containing $I$). Since $A$ is commutative, it is then clear that $(I:x)$ contains $I$ and, therefore, lies in $\Gcal_I$ for any $x\in A$. Condition (2) was discussed in Corollary~\ref{cor:Giraud finite}, and the result follows.
\end{proof}

An explicit description of an actual partial silting module whose existence is ensured by Theorem~\ref{thm:from flat to silting} or Corollary~\ref{cor:commutative flat is silting} is not easy and leads, in general, to an instance of Quillen's small object argument in the proof of~\cite[Theorem 6.3]{MS}, which is behind the proof of Proposition~\ref{prop:loc at proj silting}.

\section{From silting to flat: the commutative case} \label{sec:flat is silting}

Throughout this section, $A$ denotes a commutative, but \emph{not} necessarily noetherian ring. Our main aim, along with explaining specific features of the setting of commutative rings, is to prove a converse to Corollary \ref{cor:commutative flat is silting}, i.~e. that flat and silting ring epimorphisms coincide in the commutative setting. For this purpose, we need to recall some additional material on Gabriel filters on commutative rings.

\subsection{Thomason subsets and Gabriel filters}

Let $\Spec A$ be the Zariski spectrum of $A$. The Zariski closed sets in $\Spec A$ are precisely the sets of the form $V(I)=\{\p \in \Spec A\mid \p \supseteq I\}$ for some ideal $I$ of $A$. If $P \subseteq \Spec A$ is Zariski closed, its complement $\Spec A \setminus P$ is quasi-compact (in the subspace topology inherited from $\Spec A$) if and only if $P=V(I)$ for a finitely generated ideal $I$. The following concept is crucial in relation to flat epimorphisms of commutative rings.

\begin{definition}[{\cite{Th97}}]\label{def:Thomason}
A subset $P \subseteq \Spec A$ is said to be a \emph{Thomason subset} provided that it is a union of closed sets with quasi-compact complements. In other words, $P = \bigcup_{\lambda\in\Lambda} V(I_\lambda)$ for a collection $(I_\lambda\mid \lambda\in\Lambda)$ of finitely generated ideals of $A$.
\end{definition}

If $A$ is noetherian, the concept of Thomason set is, of course, equivalent to that of specialization closed set. Recall that $P$ is \emph{specialization closed} if it is a union of arbitrary Zariski closed sets.

\smallskip

Given a module $M$ and a full subcategory $\Ccal\subseteq\ModA$, 
we denote by $\Supp M=\{\p\in\Spec{A}\mid M\otimes_A A_\mathfrak{p}\not=0\}$
and $\Supp\Ccal=\bigcup_{M\in\Ccal}\Supp M$
the \emph{support} of $M$ and $\Ccal$, respectively. 
Every Thomason subset $P\subseteq\Spec{A}$ gives rise to a Gabriel filter on $A$ 
\[ \mathcal G_P=\{I\subseteq A\mid V(I)\subseteq P\}. \]
The corresponding hereditary torsion pair $(\mathcal T_P,\mathcal F_P)$ is given by the classes of \emph{$P$-torsion} and \emph{$P$-torsion-free} modules, respectively:
\begin{align*}
\mathcal T_P&=\{M\in \ModA\mid \Supp{M}\subseteq P\}, \\
\mathcal F_P&=\{M\in \ModA\mid \Hom{A}{A/I}{M}=0\text{ whenever }V(I)\subseteq P\}.
\end{align*}
The Giraud subcategory corresponding to $\Gcal_P$ is
\[\mathcal X_P
=\{M\in\ModA\mid\Hom{A}{A/I}{M}=\Ext{1}{A}{A/I}{M}=0\text{ whenever }V(I)\subseteq P\}.\]
Furthermore, $\Dcal_{\Gcal_P}$ coincides with the class of \emph{$P$-divisible} modules
\[ \Dcal_P=\{M\in\ModA\mid M\otimes_A A/I=0\text{ whenever }V(I)\subseteq P\}. \]

Notice that one can recover the set $P$ from the hereditary torsion pair $(\Tcal_P,\Fcal_P)$ by taking support:
$P=\Supp\Tcal_P.$ For details, we refer to \cite[\S2.2]{HSt} and, for the more classical case where $A$ is noetherian, also to \cite[\S VII.6]{St}.

\begin{theorem}[{\cite[Propositions 2.7 and 2.12]{HSt}, \cite[Theorem 4.7]{AH}}]\label{thm:Gabrielcorr}
For any commutative ring $A$, the assignments $P\longmapsto \Gcal_P$ and $P\longmapsto \Dcal_P$ define bijections between:
\begin{enumerate}
\item[(i)] Thomason subsets of $\Spec{A}$,
\item[(ii)] Gabriel filters of finite type on $A$,
\item[(iii)] silting classes in $\ModA$.
\end{enumerate}
\end{theorem}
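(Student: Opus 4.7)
The plan is to split the claim into two bijections, $(\mathrm{i})\leftrightarrow(\mathrm{ii})$ via $P\mapsto\Gcal_P$ and $(\mathrm{i})\leftrightarrow(\mathrm{iii})$ via $P\mapsto\Dcal_P$, and to check that they are compatible in the sense that $\Dcal_P=\Dcal_{\Gcal_P}$ (in the notation of Section~\ref{sec:flat in general}). The second bijection is essentially~\cite[Theorem~4.7]{AH}, which has already been invoked in Proposition~\ref{divisible silting}; the compatibility is automatic because $M\otimes_AA/I=0$ is equivalent to $MI=M$, and a Gabriel filter of finite type is determined by its finitely generated ideals. So the novel content is the Thomason--Gabriel correspondence of~\cite[Propositions~2.7 and~2.12]{HSt}.

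For $P\mapsto\Gcal_P$ I would first verify the plain filter axioms using $V(I\cap J)=V(I)\cup V(J)$ and the order-reversing nature of $V$. The first Gabriel axiom holds automatically in the commutative setting because $I\subseteq(I:x)$ for every $x\in A$. For the second axiom, given $I\in\Gcal_P$ and $J$ with $(J:x)\in\Gcal_P$ for all $x\in I$, take $\p\in V(J)$; either $\p\supseteq I$ (so $\p\in V(I)\subseteq P$), or there is $x\in I\setminus\p$, in which case $a\in(J:x)$ forces $ax\in J\subseteq\p$ and hence $a\in\p$, giving $(J:x)\subseteq\p$, so $\p\in V((J:x))\subseteq P$. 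Finiteness of type uses that $V(I)\cong\Spec(A/I)$ is quasi-compact: if $P=\bigcup_\lambda V(I_\lambda)$ with $I_\lambda$ finitely generated, then $V(I)$ is covered by finitely many $V(I_{\lambda_1}),\dots,V(I_{\lambda_k})$; the finitely generated product $J=I_{\lambda_1}\cdots I_{\lambda_k}$ lies in $\Gcal_P$ (using the standard fact, following from the second Gabriel axiom, that $\Gcal_P$ is closed under products, since $(K_1K_2:x)\supseteq K_2$ for $x\in K_1$), and $\sqrt J\subseteq\sqrt I$ forces some power $J^n\subseteq I$ to be a finitely generated member of $\Gcal_P$ contained in $I$.

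For the inverse, send $\Gcal$ to $P_\Gcal:=\Supp\Tcal_\Gcal=\bigcup\{V(J)\mid J\in\Gcal\text{ finitely generated}\}$, which is patently Thomason. The composition $P_{\Gcal_P}=P$ is immediate from the presentation $P=\bigcup V(I_\lambda)$ with $I_\lambda$ finitely generated in $\Gcal_P$. For $\Gcal_{P_\Gcal}=\Gcal$, the inclusion $\Gcal\subseteq\Gcal_{P_\Gcal}$ uses finite type to replace each ideal by a finitely generated subideal still in $\Gcal$, and the reverse inclusion again exploits the quasi-compactness of $V(I)$ together with closure of $\Gcal$ under finite products and powers, exactly as in the previous paragraph. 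The main technical wrinkle, and what I expect to be the only real obstacle, is to cleanly weave quasi-compactness of $V(I)$ together with the closure of a Gabriel filter under $\sqrt{(-)}$-preserving operations—which is precisely where the Thomason (as opposed to specialization-closed) hypothesis is used. Once these bijections are in place, the full three-way correspondence of the theorem follows by combining them with~\cite[Theorem~4.7]{AH}.
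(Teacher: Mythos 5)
Your decomposition into the two bijections $P\mapsto\Gcal_P$ and $P\mapsto\Dcal_P$, together with the compatibility $\Dcal_P=\Dcal_{\Gcal_P}$ (which is indeed immediate from the definitions), is sound, and your verification of the Gabriel axioms and of the two round-trip identities is essentially correct. The genuine gap is exactly in the step you flag as ``the main technical wrinkle'': extracting a finite subcover. You want to conclude from $V(I)\subseteq P=\bigcup_\lambda V(I_\lambda)$, with each $I_\lambda$ finitely generated, that $V(I)\subseteq V(I_{\lambda_1})\cup\cdots\cup V(I_{\lambda_k})$ for finitely many indices, and you attribute this to quasi-compactness of $V(I)\cong\Spec{(A/I)}$. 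But the sets $V(I_\lambda)$ are Zariski \emph{closed}, not open, so Zariski quasi-compactness of $V(I)$ does not produce a finite subcover; and passing to complements one would need the finite intersection property for quasi-compact \emph{open} subsets, which fails in a general spectral space.

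The correct tool is compactness in the patch (constructible) topology on $\Spec{A}$: $V(I)$ is an intersection of the clopen sets $V(f)$, $f\in I$, hence patch-closed and therefore patch-compact; each $V(I_\lambda)$ with $I_\lambda$ finitely generated is a finite intersection of $V(f)$'s, hence patch-clopen; and a Thomason subset, as a union of such $V(I_\lambda)$, is patch-open. With patch-compactness the finite subcover exists, and the rest of your argument---forming the finitely generated product $J'=I_{\lambda_1}\cdots I_{\lambda_k}\in\Gcal_P$ and extracting a power $(J')^n\subseteq I$ from $J'\subseteq\sqrt{I}$ using that $J'$ is finitely generated---goes through. The same repair is needed a second time for the inclusion $\Gcal_{P_\Gcal}\subseteq\Gcal$. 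Once this is fixed, your outline does recover the Thomason--Gabriel correspondence of \cite{HSt}, which the paper itself cites rather than reproves.
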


\subsection{From silting to flat}

Now we shall focus on proving the converse of Corollary~\ref{cor:commutative flat is silting}. To start with, recall that if $\lambda\dd A\la B$ is a ring epimorphism with $A$ commutative, then $B$ is also commutative, and the induced map $\lambda^\flat\colon \Spec B \longrightarrow \Spec A$ is injective (see \cite[Corollary 1.2]{Si} and \cite[Proposition IV.1.4]{Laz}). This allows us to deduce the following criterion for the flatness of a ring epimorphism.

\begin{lemma} \label{lem:flatness locally}
Let $\lambda\dd A\la B$ be a ring epimorphism with $A$ commutative. Then $B$ is flat if and only if $\lambda_\p\colon A_\p \longrightarrow B_\p$ is flat for each $\p$ in $\im\lambda^\flat \subseteq \Spec A$.
\end{lemma}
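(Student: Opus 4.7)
The plan is to prove the equivalence by handling the two implications separately; the forward direction is routine, while the converse will be a local-to-global argument on $\Spec{B}$ rather than on $\Spec{A}$. First I would dispose of $(\Rightarrow)$: flatness is preserved under base change, so if $\lambda$ is flat and $S\subseteq A$ is any multiplicative subset, then $S^{-1}\lambda\colon S^{-1}A\la S^{-1}B$ is flat. Applying this to $S=A\setminus\p$ yields flatness of $\lambda_\p$ for every $\p\in\Spec{A}$, and in particular for every $\p\in\im\lambda^\flat$.

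For the converse, the key observation is that flatness of a $B$-module over $A$ can be tested on $\Spec{B}$. Given any $A$-module $N$, the group $\Tor{1}{A}{B}{N}$ inherits a natural $B$-module structure from $B$, so it vanishes iff its localization at every $\q\in\Spec{B}$ does. Writing $\p:=\lambda^{-1}(\q)$ and using that $B_\q$ is flat over $B$, one obtains
\[
\Tor{1}{A}{B}{N}_\q \;\cong\; \Tor{1}{A}{B_\q}{N} \;\cong\; \Tor{1}{A_\p}{B_\q}{N_\p},
\]
where the last isomorphism is legitimate because $B_\q$ is naturally an $A_\p$-module (elements of $A\setminus\p$ act invertibly on it) and because the localization at $\p$ of any $A$-projective resolution of $N$ is an $A_\p$-projective resolution of $N_\p$. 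Hence $B$ is flat over $A$ iff $B_\q$ is flat over $A_\p$ for every $\q\in\Spec{B}$ with $\p=\lambda^{-1}(\q)$.

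To finish the converse, I would observe that any such $\p$ belongs to $\im\lambda^\flat$ by construction, so $B_\p$ is flat over $A_\p$ by hypothesis. Moreover, the inclusion $\lambda(A\setminus\p)\subseteq B\setminus\q$ shows that $B_\q$ is obtained from $B_\p$ by inverting further elements, so $B_\q$ is flat over $B_\p$, and hence over $A_\p$. This yields flatness of $B$ over $A$.

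The only subtle point, and the main place where one could stumble, is the local-to-global criterion on $\Spec{B}$: one has to check that the natural $B$-module structure on $\Tor{1}{A}{B}{N}$ is compatible with base change along $A\to A_\p$, so that its stalk at $\q$ is computed as $\Tor{1}{A_\p}{B_\q}{N_\p}$. Once this criterion is in place, the hypothesis at $\p\in\im\lambda^\flat$ applies directly; in particular, no separate argument is needed for primes $\p\notin\im\lambda^\flat$ of $A$, since only contractions $\p=\lambda^{-1}(\q)$ of primes $\q\in\Spec{B}$ enter the criterion, and these are automatically in $\im\lambda^\flat$.
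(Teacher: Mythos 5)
Your proof is correct and follows the same structure as the paper's: the forward direction is base change, and for the converse both arguments reduce to checking flatness of $B_\q$ over $A_\p$ for $\q\in\Spec B$, $\p=\lambda^{-1}(\q)$, and then observe that $B_\q$ is a further localization of $B_\p$, hence flat over $A_\p$ by the hypothesis. The only difference is cosmetic: the paper cites Matsumura's local criterion of flatness (\cite[Theorem~7.1]{M}) for the reduction to stalks on $\Spec B$, whereas you reprove that criterion from scratch via the $B$-module structure on $\Tor{1}{A}{B}{N}$.
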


\begin{proof}
If $\lambda$ is flat, so is clearly each $\lambda_\p$. Conversely, $B$ is a flat $A$-module if (and only if) $B_\q$ is a flat $A_\p$-module for each $\q\in\Spec B$ and $\p=\lambda^\flat(\q)$ by \cite[Theorem~7.1]{M}. It remains to observe that if $B_\p$ is flat over $A_\p$, then so is $B_\q \cong (B_\p)_\q$.
\end{proof}

We will also note that localization of a silting epimorphism is again a silting epimorphism.

\begin{lemma} \label{lem:loc silting}
Let $A$ be a commutative ring, $S\subseteq A$ be a multiplicative subset, and $T\in\ModA$ be a partial silting module with respect to a projective presentation $\sigma\colon P_1 \longrightarrow P_0$. Then $S^{-1}T\in\rmod{S^{-1}A}$ is a partial silting module with respect to the projective presentation $S^{-1}\sigma$. Moreover, if $\lambda\dd A\la B$ is the silting ring epimorphism given by $\sigma$, then $S^{-1}\lambda\dd S^{-1}A\la S^{-1}B$ is the silting ring epimorphism given by $S^{-1}\sigma$.
\end{lemma}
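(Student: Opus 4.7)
The plan is to establish both claims by reducing to the universal property characterization of silting ring epimorphisms given by Proposition~\ref{prop:loc at proj silting}, exploiting exactness of localization and the fact that $\rmod{S^{-1}A}$ is a bireflective subcategory of $\ModA$ (being the module category associated to the flat ring epimorphism $A\la S^{-1}A$).

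For the first claim, note that $S^{-1}\sigma\colon S^{-1}P_1\la S^{-1}P_0$ is a morphism in $\mathrm{Proj}\textrm{-}S^{-1}A$, so $\Dcal_{S^{-1}\sigma}$ is a well-defined subcategory of $\rmod{S^{-1}A}$. The tensor--hom adjunction yields, for each $X\in\rmod{S^{-1}A}$, a natural isomorphism $\Hom{S^{-1}A}{S^{-1}P_i}{X}\cong\Hom{A}{P_i}{X}$, and hence the identification
\[
\Dcal_{S^{-1}\sigma} \;=\; \Dcal_\sigma\cap\rmod{S^{-1}A}.
\]
Since $\rmod{S^{-1}A}$ is closed under all limits and colimits in $\ModA$, the closure of $\Dcal_\sigma$ under coproducts, extensions, and quotients transfers to $\Dcal_{S^{-1}\sigma}$, which is therefore a torsion class in $\rmod{S^{-1}A}$. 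To see that $S^{-1}T\in\Dcal_{S^{-1}\sigma}$, write $S^{-1}T$ as the filtered colimit of the system $\{T\overset{s\cdot-}{\la}T\mid s\in S\}$; as such, it is a quotient of $T^{(S)}$, which lies in $\Dcal_\sigma$ by coproduct-closure, so $S^{-1}T\in\Dcal_\sigma\cap\rmod{S^{-1}A}=\Dcal_{S^{-1}\sigma}$. Thus $S^{-1}T$ is partial silting with respect to $S^{-1}\sigma$.

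For the second claim, Proposition~\ref{prop:loc at proj silting} identifies the silting ring epimorphism associated with $S^{-1}\sigma$ as the universal ring homomorphism from $S^{-1}A$ that turns $S^{-1}\sigma\otimes_{S^{-1}A}(-)$ into an isomorphism. Invertibility of $S^{-1}\sigma\otimes_{S^{-1}A}S^{-1}B\cong(\sigma\otimes_AB)\otimes_BS^{-1}B$ is immediate since $\sigma\otimes_AB$ is already invertible. For universality, let $\mu\colon S^{-1}A\la C$ be any ring homomorphism with $S^{-1}\sigma\otimes_{S^{-1}A}C$ invertible, and write $\mu'$ for the composite $A\la S^{-1}A\overset{\mu}{\la}C$. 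Since $C$ is an $S^{-1}A$-module, $\sigma\otimes_AC=S^{-1}\sigma\otimes_{S^{-1}A}C$ is invertible, so by the universal property of $\lambda$ there is a unique $\tilde\mu\colon B\la C$ with $\tilde\mu\circ\lambda=\mu'$. Each element of $\lambda(S)\subseteq B$ is sent by $\tilde\mu$ to a unit of $C$ (its image in $S^{-1}A$ is invertible and $\mu$ is a ring map), so the universal property of the localization $B\la S^{-1}B$ yields a unique $\hat\mu\colon S^{-1}B\la C$ with $\hat\mu\circ(B\la S^{-1}B)=\tilde\mu$. Both the identity $\hat\mu\circ S^{-1}\lambda=\mu$ and the uniqueness of $\hat\mu$ then follow from the facts that $A\la S^{-1}A$ and $B\la S^{-1}B$ are ring epimorphisms.

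The only subtlety is the bookkeeping of the two universal properties---that of $\lambda$ and that of the localization maps---and correctly assembling them; no serious obstacle arises, as everything reduces cleanly once the identification $\Dcal_{S^{-1}\sigma}=\Dcal_\sigma\cap\rmod{S^{-1}A}$ is in place.
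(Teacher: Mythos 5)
Your proof is correct, but for the second assertion it follows a genuinely different route from the paper. For the first assertion, both you and the paper use the $\Hom$-adjunction to obtain $\Dcal_{S^{-1}\sigma}=\Dcal_\sigma\cap\rmod{S^{-1}A}$ and deduce closure under coproducts; the paper then places $S^{-1}T$ in $\Dcal_\sigma$ by invoking the definability of $\Dcal_\sigma$ (so that the direct limit $S^{-1}T=\varinjlim T$ stays in it), whereas your realization of $S^{-1}T$ as a quotient of the coproduct $T^{(S)}$ is a more elementary alternative that needs only the torsion-class closure properties of $\Dcal_\sigma$. For the second assertion, the paper notes that the same adjunction gives $\Xcal_{S^{-1}\sigma}=\rmod{S^{-1}A}\cap\Xcal_\sigma$, which is exactly the essential image of $(S^{-1}\lambda)_*$; since ring epimorphisms are classified by their essential images (Theorem~\ref{thm:epicl}), this finishes the argument in a line. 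You instead verify directly the universal property from Proposition~\ref{prop:loc at proj silting}, assembling the factorization through $S^{-1}\lambda$ from the universal properties of $\lambda$ and of the localization $B\la S^{-1}B$. Both work; the paper's route is shorter, while yours has the advantage of making the factorization explicit. One minor imprecision in your write-up: the fact that $\rmod{S^{-1}A}$ is closed under limits and colimits in $\ModA$ does not on its own give closure under arbitrary $A$-module quotients (e.~g.\ $\QQ/\Z$ is a quotient of $\QQ$ in $\rmod{\Z}$ but is not a $\QQ$-module). Fortunately this does not matter: $\Dcal_{S^{-1}\sigma}$ is closed under extensions, quotients and products inside $\rmod{S^{-1}A}$ automatically, being a class of the form $\Dcal_\Sigma$, so only closure under coproducts had to be checked, and your intersection argument handles that correctly.
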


\begin{proof}
Let $T\in\ModA$ be partial silting with respect to $\sigma$. Since $T\in\Dcal_\sigma$ and $\Dcal_\sigma$ is definable by~\cite[Corollary~3.5 and Proposition~3.10]{AMV1}, we also have $S^{-1}T\in\Dcal_\sigma$. Moreover, for any $M\in\rmod{S^{-1}A}$, we clearly have that $\Hom{A}{\sigma}{M}$ is surjective (bijective) if and only if $\Hom{S^{-1}A}{S^{-1}\sigma}{M}$ is surjective (bijective), respectively. It follows that $\Dcal_{S^{-1}\sigma}$ is closed under coproducts in $\rmod{S^{-1}A}$ (so that $S^{-1}T$ is partial silting) and that $\Xcal_{S^{-1}\sigma}=\rmod{S^{-1}A}\cap\Xcal_\sigma\subseteq\ModA$ (so that $S^{-1}\lambda$ is the silting ring epimorphism given by $S^{-1}\sigma$).
\end{proof}

The main result of this section is as follows.

\begin{theorem} \label{thm:flat is silting}
Let $A$ be a commutative ring and $\lambda\dd A\la B$ be a ring homomorphism. Then $\lambda$ is a flat epimorphism if and only if $\lambda$ is a silting epimorphism.
\end{theorem}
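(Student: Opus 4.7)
The forward direction that flat epimorphisms are silting is Corollary~\ref{cor:commutative flat is silting}; the substance is the converse, that every silting ring epimorphism $\lambda\colon A \to B$ of commutative rings is flat. My plan is to localize to a local ring and show that in that setting the silting class is forced to be the whole module category, so $\lambda$ must be an isomorphism.

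By Lemma~\ref{lem:flatness locally} it suffices to verify that $\lambda_\p \colon A_\p \to B_\p$ is flat for each $\p \in \im\lambda^\flat$. Given such a $\p = \lambda^{-1}(\q)$ with $\q \in \Spec B$, the localized map remains a silting ring epimorphism by Lemma~\ref{lem:loc silting}, and $\q B_\p$ is a prime of $B_\p$ lying over $\p A_\p$. So I may assume from the outset that $A$ is a commutative local ring with maximal ideal $\m$ and that there exists $\q \in \Spec B$ with $\lambda^{-1}(\q) = \m$; the goal is to show $\lambda$ is flat.

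Let $\sigma\colon P_1 \to P_0$ be a projective presentation of a partial silting module $T$ realizing $\lambda$ as in Proposition~\ref{prop:loc at proj silting}, so that $\Xcal_\sigma$ is the essential image of $\lambda_\ast \colon \ModB \to \ModA$. By Theorem~\ref{thm:completion} and Theorem~\ref{thm:Gabrielcorr}, the silting class $\Dcal_\sigma$ has the form $\Dcal_P$ for a unique Thomason subset $P \subseteq \Spec A$. I claim $P = \emptyset$. If $\m \in P$, then by the definition of a Thomason subset there is a finitely generated ideal $I$ with $\m \in V(I) \subseteq P$; from $B \in \Xcal_\sigma \subseteq \Dcal_\sigma = \Dcal_P$ we get $B \cdot \lambda(I) = B$, while $I \subseteq \m$ forces $\lambda(I) \subseteq \q$ and hence $B \subseteq \q$, contradicting $\q \ne B$. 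So $\m \notin P$, and since $A$ is local, every proper finitely generated ideal lies in $\m$, which would in turn place $\m$ in $P$; thus $P = \emptyset$ and $\Dcal_\sigma = \ModA$.

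It remains to deduce from $\Dcal_\sigma = \ModA$ that $\lambda$ is an isomorphism. The surjectivity of $\Hom_A(\sigma, X)$ for every $X$, applied with $X = P_1$, exhibits $\sigma$ as a split monomorphism of projective modules, so $T = \Coker \sigma$ is projective; being projective over a commutative local ring, it is free by Kaplansky, say $T \cong A^{(I)}$. A direct computation gives $\Xcal_\sigma = T^{\perp_0} = \{X \mid X^I = 0\}$, and since $B \in \Xcal_\sigma$ with $B \ne 0$, the index set must be empty, i.e.\ $T = 0$. Then $\Xcal_\sigma = \ModA$, and by the bijection in Theorem~\ref{thm:epicl} the ring epimorphism $\lambda$ is equivalent to $\id_A$, so $\lambda$ is an isomorphism and in particular flat. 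The step I expect to be most delicate is the implication $\m \notin P \Rightarrow P = \emptyset$, which crucially uses both commutativity and the local hypothesis; one should also bear in mind that a silting ring epimorphism may a priori admit several projective presentations, but the argument above is independent of the choice.
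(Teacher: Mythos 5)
Your proposal is correct, and it shares the same overall skeleton as the paper's proof — reduce to the local case via Lemma~\ref{lem:flatness locally} and Lemma~\ref{lem:loc silting}, show that the silting class is forced to be all of $\ModA$, deduce that $\sigma$ is a split monomorphism with $T=\Coker\sigma$ projective hence free, and conclude $T=0$ so $\lambda$ is an isomorphism. The genuine difference is in the middle step. The paper first shows that $A/\m$ lies in $\Xcal_\sigma$ (because $\lambda\otimes_A A/\m$ is a ring epimorphism from a field into a nonzero ring, hence an isomorphism by Silver's result, invoked via \cite[Lemma 4.1]{AMSTV} and \cite[Corollary 1.2]{Si}), then rewrites $\Dcal_\sigma=\Dcal_\Sigma$ with $\Sigma\subseteq\projA$ via Theorem~\ref{thm:finite type silting}, and applies Nakayama's lemma to each $\sigma'\in\Sigma$ to conclude $\Dcal_\sigma=\ModA$. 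You instead write $\Dcal_\sigma=\Dcal_P$ for a Thomason set $P\subseteq\Spec A$ via Theorem~\ref{thm:Gabrielcorr}, and show $P=\emptyset$ by a direct lying-over argument: if $\m\in V(I)\subseteq P$ with $I$ finitely generated, then $B\in\Dcal_P$ gives $B\lambda(I)=B$, yet $I\subseteq\m=\lambda^{-1}(\q)$ gives $\lambda(I)\subseteq\q$, a contradiction; and since $A$ is local every nonempty basic closed subset contains $\m$. Your route avoids the detour through $A/\m$, Silver's characterization of ring epimorphisms from fields, and Nakayama, replacing them by the Thomason-set geometry; the paper's route avoids explicitly invoking the Thomason parametrization of silting classes. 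Both ultimately draw on the same classification results of \cite{AH}. One small stylistic point: in your final step you appeal to Kaplansky's theorem to identify $T$ as a free module $A^{(I)}$, whereas the paper phrases the same conclusion as ``$T$ is either zero or a projective generator''; these are equivalent over a local ring, and either way the argument that $B\ne 0$ lies in $T^{\perp_0}$ closes it out.
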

\begin{proof}
If $\lambda\dd A\la B$ is a flat ring epimorphism, it is a silting epimorphism by Corollary~\ref{cor:commutative flat is silting}.

Suppose conversely that $\lambda$ is a silting epimorphism such that $\im\lambda_* = \Xcal_\sigma$ for a partial silting module $T$ with a projective presentation $\sigma\colon P_1 \longrightarrow P_0$. Then, according to Lemma~\ref{lem:loc silting}, $\lambda_\p\dd A_\p\la B_\p$ is a silting epimorphism such that $\im(\lambda_{\p})_*=\Xcal_{\sigma_\p}$ for any $\p\in\Spec A$. To prove that $B$ is flat over $A$, it suffices to prove that $B_\p$ is flat over $A_\p$ for every $\p\in\im\lambda^\flat\subseteq\Spec A$ according to Lemma~\ref{lem:flatness locally}. All in all, we may without loss of generality assume that $A$ is local and the maximal ideal $\m\subseteq A$ is in the image of $\lambda^\flat\colon \Spec B \longrightarrow \Spec A$. We will show that these assumptions imply not only that $\lambda$ is flat, but also that it is an isomorphism.

As $\lambda\otimes A/\m\colon A/\m \longrightarrow B/B\m$ is a ring epimorphism by~\cite[Lemma 4.1]{AMSTV} and $A/\m$ is a field, we infer from \cite[Corollary 1.2]{Si} that $\lambda\otimes_A A/\m$ is an isomorphism. In other words, $A/\m$ lies in $\Xcal_\sigma\subseteq\Dcal_\sigma$.
By Theorem~\ref{thm:finite type silting}, there exists a set $\Sigma$ of morphisms between finitely generated projective modules such that $\Dcal_\sigma = \Dcal_\Sigma$. Now, the map $\Hom{A}{\sigma}{A/\m}\cong A/\m\otimes_A\Hom{A}{\sigma}{A}$ must be surjective for each $\sigma$ in $\Sigma$, which by the Nakayama lemma implies that $\Hom{A}{\sigma}{A}$ is a split epimorphism and so $\sigma$ is a split monomorphism for each $\sigma\in\Sigma$. This in turn implies that $\Dcal_\sigma = \Dcal_\Sigma = \ModA$, so $\sigma$ is a split monomorphism and the partial silting $R$-module $T=\Coker\sigma$ is projective. Since $R$ is local, $T$ is either zero or a projective generator, and the latter is impossible since $\Xcal_\sigma\ne0$. In other words, $\sigma$ is an isomorphism and so is $\lambda\colon A \longrightarrow B$ is an isomorphism, as desired.
\end{proof}

\begin{remark}
As explained in Remark~\ref{rem:silting epi}(1),
the implication that silting epimorphisms are flat is much easier for commutative noetherian rings $A$.
In this case, $B$ is also commutative noetherian by~\cite[Corollaire IV.2.3]{Laz}.
\end{remark}

\bibliographystyle{abbrv}
\bibliography{flat-is-silting}

\end{document}